\newcommand{\ep}{\varepsilon}
\newcommand{\R}{\mathbb{R}}
\newtheorem{them}{Theorem}[section]
\newtheorem{lem}[them]{Lemma}
\newtheorem{corol}[them]{Corollary}
\newtheorem{pro}[them]{Proposition}
\newtheorem{Def}[them]{Definition}
\newtheorem{Rema}[them]{Remark}
\newcommand \Hp {H^1_{per}}
\begin{document}
%\large
\title{A variational problem associated with the minimal speed of travelling waves for spatially periodic reaction-diffusion equations}
\author{Xing Liang \thanks{Email: xliang@ustc.edu.cn} \\
Department of Mathematics\\ University of Science and Technology
of China \\ and \\Graduate School
 of Mathematical Sciences\\ University of Tokyo \and Xiaotao Lin\thanks{Email: linxt@ms.u-tokyo.ac.jp; Corresponding author} \\
Graduate School
 of Mathematical Sciences\\ University of Tokyo  \and Hiroshi Matano\thanks{Email: matano@ms.u-tokyo.ac.jp} \\
Graduate School
 of Mathematical Sciences\\ University of Tokyo}

\date{}
\maketitle
\begin{abstract}
 We consider the equation $u_t=u_{xx}+b(x)u(1-u),$ $x\in\mathbb R,$
 where $b(x)$ is a nonnegative measure on $\mathbb R$ that is
 periodic in $x.$ In the case where $b(x)$ is a smooth periodic
 function, it is known that there exists a travelling wave with
 speed $c$ for any $c\geq c^*(b),$ where $c^*(b)$ is a certain
 positive number depending on $b.$ Such a travelling wave is often
 called a \lq\lq pulsating travelling wave" or a \lq\lq periodic
 travelling wave", and $c^*(b)$ is called the \lq\lq minimal
 speed". In this paper, we first extend this theory by showing the
 existence of the minimal speed $c^*(b)$ for any nonnegative
 measure $b$ with period $L.$ Next we study the question of maximizing $c^*(b)$
 under the constraint $\int_{[0,L)}b(x)dx=\alpha L,$ where $\alpha$ is an arbitrarily given constant.
 This question  is closely related to the problem studied by mathematical ecologists in late 1980's but its answer
 has not been known. We answer this question by proving that
 the maximum is attained by periodically arrayed Dirac's delta
 functions $\alpha L\sum_{k\in\mathbb Z}\delta(x+kL).$
\end{abstract}

{\section{Introduction}} Travelling wave solutions describe a wide
class of phenomena in combustion physics, chemical kinetics,
biology and other natural sciences. {}From the physical point of
view, travelling waves usually describe transition processes.
Transition from one equilibrium to another is a typical case,
although more complicated situations may arise. Since the
classical paper by Kolmogorov, Petrovsky and Piskunov in 1937,
travelling wave solutions have been intensively studied. For
example, the monograph of Volpert, Volpert and Volpert \cite{s2}
provides a comprehensive discussion on this subject.

{}From the ecological point of view, travelling waves typically
describe the expansion of the territory of a certain species,
including, in particular, the invasion of alien species in given
habitat. This kind of process may occur in both homogeneous and
heterogeneous media. Models for biological invasions in spatially
periodic environments were first introduced by Shigesada {\it{et
al}}. in dimensions 1 and 2 (see \cite{s4,s5,s6}). More precisely,
they considered spatially segmented habitats where favorable and
less favorable (or even unfavorable) zones appear alternately and
analyzed how the pattern and scale of spatial fragmentation affect
the speed of invasions. In their study, the spatial fragmentation
was typically represented by step functions which take two
different values periodically. Mathematically, their analysis was
partly unrigorous as it relied on formal asymptotics of the
travelling wave far away from the front.

Berestycki, Hamel \cite{BH} and Berestycki, Hamel, Roques
\cite{s1} extended and mathematically deepened the work of
Shigesada \emph{et al}. significantly, by dealing with much more
general equations of the form $u_t=\nabla \cdot ((A(x)\nabla
u))+f(x,u)$ in $\mathbb R^n$ with rather general smooth periodic
coefficients and by developing various mathematical techniques to
study the effect of environmental fragmentation rigourously.

Among other things, they proved that, under certain assumptions on
the coefficients, there exists $c^*>0$ such that the equation has
a pulsating travelling wave solution if and only if $c\geq c^*.$
Furthermore, they showed that the minimal speed $c^*$ is
characterized by the following formula:
\[
\ c^*=\min\{c>0\,|\, \exists \lambda>0 \hbox{ such that
}\mu(c,\lambda)=0 \},
\]
where $\mu(c,\lambda)$ is the principal eigenvalue of a certain
elliptic operator associated with the linearization  of the
travelling wave far away from the front. A more detailed account
of this result will be stated in Subsection \ref{basicnotions} in
the special context of our problem.

By using a totally different approach Weinberger \cite{s8} also
proved the existence of the minimal speed $c^*$ of pulsating
travelling waves in a more abstract framework. His method relies
on the theory of monotone operators and is a generalization of his
earlier work \cite{Wein1982} to spatially periodic media.

It is important to note that, as far as one-dimensional diffusion
equations are concerned, the minimal speed $c^*$ coincides with
the so-called spreading speed for a large class of monostable
nonlinearities. Here the \lq\lq spreading speed'' roughly means
the asymptotic speed of an expanding front that starts from a
compactly supported initial data (see Definition
\ref{defofspreadingspeed} for details). An early study of
spreading speeds in multi-dimensional spaces can be found in
\cite{AroWei1,AroWei2,Wein1982}. Weinberger \cite{s8} then studied
the spreading speeds of order-preserving monostable mappings and
applied the results to spatially periodic reaction diffusion
equations and lattice systems.

Berestycki, Hamel and Nadirashvili \cite{BHN} also studied the
spreading speed of reaction diffusion equation in a very general
periodically fragmented environment where both the coefficients
and the domain itself are periodic. Recently, the same authors
\cite{BHN2} studied the spreading speed of reaction-diffusion
equations with constant coefficients, but in very general domains
which are not necessarily periodic.

In this paper we consider the following equation
\begin{equation}\label{firstequation}
u_t=u_{xx}+b(x)u(1-u),\,\,\,x\in\mathbb R,
\end{equation}
where $b(x)$ is either a smooth function or a measure satisfying
$b(x)\geq 0$ and $b(x+L)\equiv b(x),\,x\in\mathbb R,$ for some
$L>0.$

By the above-mentioned work \cite{BH,s1,BHN,s8}, the minimal speed
$c^*$ of travelling waves is well-defined at least as far as
$b(x)$ is a smooth function, and it coincides with the spreading
speed. We denote this minimal speed by $c^*(b).$ The goal of the
present paper is to consider the variational problem
\[
\ \underset{b}{\hbox{Maximize }}c^*(b)
\]
under the constraint
\begin{equation}\label{intergral}
\ \int_{[0,L)} b(x)dx=\alpha L,
\end{equation}
where $\alpha>0$ is an arbitrarily given constant. In other words
we want to find out whether or not there exists an optimal $b(x)$
that gives the fastest spreading speed. We will show that the
maximum of $c^*(b)$ does indeed exist but that it is not attained
by any smooth function $b(x)$ but by a measure which is composed
of periodically arrayed Dirac's delta functions.

In order to study the above problem, we have to consider the
equation
 \begin{equation}\label{Mainequation}u_t=u_{xx}+\bar{b}(x)u(1-u),\,\,\,x\in\mathbb R,\end{equation} where $\bar{b}$ is a nonnegative measure satisfying \eqref{integral}.
We introdcue two important quantities  $c^*(\bar{b})$ and
$c_e^*(\bar{b})$.  The former denotes the minimal speed of
travelling waves for equation \eqref{Mainequation}. The latter is
a quantity associated with a generalized eigenvalue problem (see
Definition \ref{Defofcestar} below). It has been known that
$c^*(b)=c_e^*(b)$ if $b$ is smooth. As we will see later,
 $c^*(\bar{b})=c_e^*(\bar{b})$ even if $\bar{b}$ is a measure (Theorem \ref{cstarbequalcestarb}
). We will then show in Theorem \ref{cestarhlargerthancestarb}
that the maximum of $c^*(\bar{b})$ is attained by
\begin{equation}\label{h}
\bar{b}(x)=h(x):=\alpha L\sum_{k \in\mathbb Z}
\delta\Big(x-\big(k+\frac{1}{2}\big)L\Big),
\end{equation}
where $\delta(x)$ is Dirac's delta function.

\fbox{Change begins}

Strictly speaking, we have to distinguish the travelling wave
speeds in the positive direction and those in the negative
direction (see Definition \ref{Definitionoftravellingwave}). The
above mentioned quantities $c^*(\bar{b})$ and $c_e^*(\bar{b})$ are
associated with travelling waves in the positive direction.
However, as we will see later in Theorem \ref{cstarbequalcestarb},
the two speeds - positive and negative - are always equal,
therefore no ambiguity occurs by not specifying the direction of
the travelling wave.

\fbox{Change ends}

This paper is organized as follows. In Section 2, we introduce
basic notations and state the main results. In Section 3, we prove
the well-posedness of equation \eqref{Mainequation}. In Section 4,
we consider a generalized eigenvalue problem associated with
equation \eqref{Mainequation}. We show that $c^*(b)$ is bounded
when $b$ is smooth and that $c_e^*(\bar{b})$ is bounded when
$\bar{b}$ varies in a certain set of measures. In Section 5, we
show that the minimal speed $c^*(\bar{b})$ of travelling waves
exists when $\bar{b}$ is a measure, and it coincides with
$c_e^*(\bar{b})$ and also with the spreading speed.  Then we
complete the proof of Theorem \ref{cestarhlargerthancestarb}. In
Section 6, we prove the lemmas on equicontinuity of the solutions
of Cauchy problem (see equation \eqref{equationinsection3} below).
These lemmas are used in Sections 3 and 5.

\section{Notation and main results}

\subsection{Basic notation}\label{basicnotions}

In this subsection we introduce some notation and recall some
known results which will be used later.

In what follows we fix constants $L>0$ and $\alpha>0.$ Let
$\Lambda(\alpha)$ be the set defined by
\[
\ \Lambda(\alpha):=\{b(x) \in C^1(\mathbb R)\,|\, b(x)\geq
0,b(x)=b(x+L) \,~and~ \int_{[0,L)} b(x)dx=\alpha L \}.
\]

\begin{Def}\label{DefofLamdaBarAlpha}
$\overline{\Lambda}(\alpha)$ is defined to be the sequential
closure of $\Lambda(\alpha)$ in the space of distribution on
$\mathbb{R}.$ More precisely,
$\bar{b}\in\overline{\Lambda}(\alpha)$ if and only if there exists
a sequence $\{b_n\}_{n=1}^{\infty}$ in $\Lambda(\alpha)$ such that
\begin{equation}\label{DefinitionOfLamdabarAlpha}
\
\int_{\mathbb{R}}\bar{b}(x)\eta(x)dx=\lim_{n\to\infty}\int_{\mathbb{R}}b_n(x)\eta(x)dx
\end{equation}
for any test function $\eta\in C_0^\infty(\mathbb{R}),$ where the
left-hand side of \eqref{DefinitionOfLamdabarAlpha} is a formal
integration representing the dual product $<\bar{b},\eta>.$ In
this sense, we denote that $b_n\to\bar{b}$ in the weak \,$*$
sense.
\end{Def}

Since each $b_n$ is positive, $\bar{b}$ is a nonnegative
distribution. Consequently, $\bar{b}$ is a Borel measure on
$\mathbb{R}.$ Therefore, \eqref{DefinitionOfLamdabarAlpha} holds
for every $\eta\in C_0(\mathbb{R}).$

In what follows, we will not distinguish the measure $\bar{b}$ and
its density function $\bar{b}(x),$ as long as there is no fear of
confusion. Thus we will often use expression as in the left-hand
side of \eqref{DefinitionOfLamdabarAlpha}.

We will also note that, since $b_n(x)$ is $L-periodic,$
$\bar{b}(x)$ is also $L-periodic$ in the following sense:
\begin{equation}\label{L-periodicity}
\ \int_\mathbb{R}\bar{b}(x)
\eta(x+L)dx=\int_{\mathbb{R}}\bar{b}(x)\eta(x)dx
\end{equation}
for $\eta\in C_0(\mathbb{R}).$

The following lemmas will be useful later:
\begin{lem}\label{lem:A}
Let $\{b_n\}\subset\Lambda(\alpha)$ be a sequence converging to
some $\bar{b}\in\overline{\Lambda}(\alpha)$ in the weak \,$*$
sense. Let $\eta(x)$ be a continuous function on $\mathbb{R}$
satisfying
\begin{equation}\label{sum-eta}
\sum_{k=-\infty}^{\infty}\max_{0\leq x \leq L}|\eta(x+kL)|<\infty.
\end{equation}
Then $\eta$ is $\bar{b}$-integrable on $\mathbb{R}$ and the
following hold:
\begin{equation}\label{bn-to-b-R}
\lim_{n\to\infty}\int_{\mathbb{R}}b_n(x)\eta(x)dx=\int_{\mathbb{R}}\bar{b}(x)
\eta(x)dx,
\end{equation}
\begin{equation}\label{sum-eta}
\int_{\mathbb{R}}\bar{b}(x)|\eta(x)|dx \leq \alpha L
\sum_{k=-\infty}^{\infty}\max_{0\leq x \leq L}|\eta(x+kL)|.
\end{equation}
\end{lem}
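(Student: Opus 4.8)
The plan is to exploit the $L$-periodicity of each $b_n$ to reduce everything to the single period $[0,L)$, where the mass bound $\int_{[0,L)} b_n\,dx = \alpha L$ is available uniformly in $n$. First I would split the real line into the periodicity cells $I_k := [kL,(k+1)L)$ and, on each cell, use the substitution $x \mapsto x+kL$ together with the $L$-periodicity of $b_n$ to write $\int_{I_k} b_n(x)\,|\eta(x)|\,dx = \int_{[0,L)} b_n(y)\,|\eta(y+kL)|\,dy \le \big(\max_{0\le y\le L}|\eta(y+kL)|\big)\int_{[0,L)} b_n(y)\,dy = \alpha L\,\max_{0\le y\le L}|\eta(y+kL)|$. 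Summing over $k\in\mathbb Z$ and using hypothesis \eqref{sum-eta} gives $\int_{\mathbb R} b_n(x)\,|\eta(x)|\,dx \le \alpha L \sum_{k} \max_{0\le x\le L}|\eta(x+kL)| =: M < \infty$, a bound uniform in $n$. This simultaneously shows $\eta \in L^1(b_n\,dx)$ for every $n$, and the analogous computation applied to $\bar b$ on a single period (after we know the periodic identity for $\bar b$) will yield \eqref{sum-eta} itself; alternatively \eqref{sum-eta} will drop out of \eqref{bn-to-b-R} by Fatou once that limit is established.

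Next I would prove \eqref{bn-to-b-R}. Fix $\varepsilon>0$. By the uniform tail bound just obtained, choose $N$ so large that $\alpha L \sum_{|k|>N} \max_{0\le x\le L}|\eta(x+kL)| < \varepsilon$; then both $\int_{|x|>NL} b_n|\eta|\,dx < \varepsilon$ for all $n$, and (once we know $\eta$ is $\bar b$-integrable) $\int_{|x|>NL} \bar b\,|\eta|\,dx \le \varepsilon$. On the compact set $[-NL,NL]$ I would approximate $\eta$ by a compactly supported smooth (or merely continuous) function $\eta_\varepsilon$ with $\|\eta - \eta_\varepsilon\|_{\infty}$ small on a slightly larger interval and $\eta_\varepsilon$ supported in, say, $[-(N+1)L,(N+1)L]$; the weak $*$ convergence $b_n \to \bar b$ applies directly to $\eta_\varepsilon \in C_0^\infty$ (or $C_0$, as noted right after Definition \ref{DefofLamdaBarAlpha}), giving $\int b_n \eta_\varepsilon\,dx \to \int \bar b\,\eta_\varepsilon\,dx$. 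The difference $\int (b_n - \bar b)\eta\,dx$ is then controlled by the two tail terms, by $\|\eta-\eta_\varepsilon\|_\infty$ times the uniform mass $\alpha L(2N+3)$ of $b_n$ (and of $\bar b$) on $[-(N+1)L,(N+1)L]$, and by the vanishing term $\int(b_n-\bar b)\eta_\varepsilon\,dx$. Letting $n\to\infty$ and then $\varepsilon \to 0$ gives \eqref{bn-to-b-R}.

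The one genuine subtlety — and the step I expect to need the most care — is establishing that $\eta$ is $\bar b$-integrable and that $\bar b$ obeys the periodic mass bound $\int_{[0,L)}\bar b\,dx \le \alpha L$ (indeed $=\alpha L$), since a priori $\bar b$ is only a nonnegative measure defined by duality against $C_0^\infty$. Here I would first test $\bar b$ against nonnegative cutoffs approximating $\mathbf 1_{[0,L)}$ and pass to the limit using $\int b_n\,\varphi\,dx \to \int \bar b\,\varphi\,dx$ and the uniform control on $b_n$, yielding the total mass $\alpha L$ on each period; combined with the $L$-periodicity \eqref{L-periodicity} of $\bar b$ this gives the uniform cell bound $\bar b(I_k) = \alpha L$. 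Then monotone convergence (approximating $|\eta|$ from below by bounded continuous compactly supported functions) together with the cell-by-cell estimate $\int_{I_k}\bar b\,|\eta|\,dx \le \alpha L\,\max_{0\le x\le L}|\eta(x+kL)|$ and hypothesis \eqref{sum-eta} proves both the $\bar b$-integrability of $\eta$ and the inequality \eqref{sum-eta}. With integrability in hand, the tail estimates used in the previous paragraph are justified, completing the proof.
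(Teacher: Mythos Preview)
Your proof is correct and follows essentially the same strategy as the paper's: exploit the $L$-periodicity and the mass constraint $\int_{[0,L)}b_n=\alpha L$ to get a uniform tail bound, truncate to a compactly supported test function where weak~$*$ convergence applies, and pass to the limit. The paper streamlines the execution by multiplying $\eta$ (and $|\eta|$) by an explicit piecewise-linear cutoff $q_M$, so that $q_M\eta,\,q_M|\eta|\in C_0(\mathbb{R})$ directly, and then obtains the bound \eqref{sum-eta} by monotone convergence as $M\to\infty$---this bypasses your separate verification that $\bar b(I_k)=\alpha L$ and your auxiliary approximation of $\eta$ by $\eta_\varepsilon$.
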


\begin{proof}
For each integer $M>0$, we define a cut-off function $q_M(x)$ by
\[
q_M(x)=\left\{
\begin{array}{ll}
0 \quad & \hbox{for}\ \ |x|>(M+1)L\\
1 \quad & \hbox{for}\ \ |x|\leq ML\\
M+1-L^{-1}|x| \quad & \hbox{otherwise}.
\end{array}\right.
\]
Then, since $q_M(x)|\eta(x)|$ is continous and compatctly
supported, we have
\begin{equation}\label{limit-qM}
\int_{\mathbb{R}}\bar{b}(x)q_M(x)|\eta(x)|dx
=\lim_{n\to\infty}\int_{\mathbb{R}}b_n(x)q_M(x)|\eta(x)|dx.
\end{equation}
Note also that, since each $b_n$ belongs to $\Lambda(\alpha)$,
\[
\begin{array}{ll}
\displaystyle \int_{\mathbb{R}}b_n(x)q_M(x)|\eta(x)|dx &
\displaystyle =\sum_{k=-\infty}^{\infty}
\int_0^L b_n(x)q_M(x+kL)|\eta(x+kL)|dx \vspace{4pt}\\
& \displaystyle \leq \alpha L \sum_{k=-\infty}^{\infty}
\max_{0\leq x \leq L}|\eta(x+kL)|.
\end{array}
\]
This and \eqref{limit-qM} imply
\[
\int_{\mathbb{R}}\bar{b}(x)q_M(x)|\eta(x)|dx\leq \alpha L
\sum_{k=-\infty}^{\infty}\max_{0\leq x \leq L}|\eta(x+kL)|
\]
for $M=1,2,3,\cdots$.  Letting $M\to\infty$ and applying the
monotone convergence theorem, we obtain \eqref{sum-eta}. Hence
$\eta$ is $\bar b$-integrable on $\mathbb{R}$.

Next we observe that
\[
\begin{array}{ll}
\displaystyle
\Big|\int_{\mathbb{R}}b_n(x)\eta(x)dx-\int_{\mathbb{R}}b_n(x)q_M(x)\eta(x)dx\Big|
& \displaystyle
\leq \int_{\mathbb{R}}b_n(x)\big(1-q_M(x)\big)|\eta(x)|dx\vspace{5pt}\\
& \displaystyle \leq \alpha L \sum_{|k|\geq M} \max_{0\leq x \leq
L}|\eta(x+kL)|
\end{array}
\]
The assertion \eqref{bn-to-b-R} easily follows from this and
\eqref{limit-qM}.  The lemma is proven.
\end{proof}

\begin{lem}
Let $\{b_n\}\subset\Lambda(\alpha)$ be a sequence converging to
some $\bar{b}\in\overline{\Lambda}(\alpha)$ in the weak \,$*$
sense. Let $\eta(x)$ be an $L$-periodic continuous function on
$\mathbb R$. Then
\begin{equation}\label{bn-to-b-L}
\lim_{n\to\infty}\int_{[0,L)}b_n(x)\eta(x)dx=\int_{[0,L)}\bar{b}(x)
\eta(x)dx,
\end{equation}
\end{lem}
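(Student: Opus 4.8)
The plan is to reduce the integral over the fundamental domain $[0,L)$ to an integral over all of $\mathbb{R}$ with a \emph{compactly supported} integrand, so that Lemma \ref{lem:A} (equivalently, the fact that \eqref{DefinitionOfLamdabarAlpha} extends to $C_0(\mathbb{R})$) can be applied directly. The key device is a continuous partition of unity subordinate to the period lattice $L\mathbb{Z}$. First I would fix a function $\psi\in C_0(\mathbb{R})$ with $\psi\ge 0$ — for instance the tent function supported on $(-L,L)$ with $\psi(0)=1$ — chosen so that
\[
\sum_{k\in\mathbb{Z}}\psi(x+kL)\equiv 1\qquad\text{for all }x\in\mathbb{R},
\]
the sum being locally finite (in fact only two terms are nonzero at each point).

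Next, using the $L$-periodicity of $b_n$ and of $\eta$ together with the compact support of $\psi$, I would split $\mathbb{R}$ into the intervals $[kL,(k+1)L)$ and substitute to obtain
\[
\int_{\mathbb{R}}b_n(x)\eta(x)\psi(x)\,dx
=\sum_{k\in\mathbb{Z}}\int_{[0,L)}b_n(y)\eta(y)\psi(y+kL)\,dy
=\int_{[0,L)}b_n(y)\eta(y)\,dy ,
\]
the last equality being the partition-of-unity identity. Since $\eta\psi$ is continuous and compactly supported it trivially satisfies the summability hypothesis \eqref{sum-eta} of Lemma \ref{lem:A}, so \eqref{bn-to-b-R} applied to $\eta\psi$ gives
\[
\lim_{n\to\infty}\int_{[0,L)}b_n(y)\eta(y)\,dy
=\lim_{n\to\infty}\int_{\mathbb{R}}b_n(x)\eta(x)\psi(x)\,dx
=\int_{\mathbb{R}}\bar{b}(x)\eta(x)\psi(x)\,dx .
\]
Finally, I would run the very same periodicity computation with the measure $\bar{b}$ in place of the function $b_n$ — invoking the $L$-periodicity \eqref{L-periodicity} of $\bar{b}$ to shift each restriction $\bar{b}|_{[kL,(k+1)L)}$ back to $[0,L)$ — to conclude
\[
\int_{\mathbb{R}}\bar{b}(x)\eta(x)\psi(x)\,dx=\int_{[0,L)}\bar{b}(x)\eta(x)\,dx ,
\]
and combining the three displays proves \eqref{bn-to-b-L}.

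The only point requiring a little care is the last step: one must state the $L$-periodicity of the \emph{measure} $\bar{b}$ correctly — it is \eqref{L-periodicity}, which by the Riesz representation theorem amounts to $\bar{b}(A+L)=\bar{b}(A)$ for every Borel set $A$, hence to the change-of-variables formula $\int_{[kL,(k+1)L)}g\,d\bar{b}=\int_{[0,L)}g(\cdot+kL)\,d\bar{b}$ — and then justify interchanging the sum over $k$ with the integration against $\bar{b}$. That interchange is harmless because, for $y$ in the compact set $[0,L]$, only finitely many of the functions $y\mapsto\psi(y+kL)$ are nonzero, so the sum is in fact finite. Everything else is routine bookkeeping with the period lattice, and no estimates beyond Lemma \ref{lem:A} are needed.
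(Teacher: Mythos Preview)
Your proof is correct and follows essentially the same route as the paper: reduce the integral over the fundamental domain to an integral against a compactly supported continuous function by means of a continuous partition of unity subordinate to $L\mathbb{Z}$, then invoke weak\,$*$ convergence. The paper's cut-off $p_\epsilon$ is in fact exactly such a partition of unity (one checks $p_\epsilon(x)+p_\epsilon(x-L)\equiv 1$), so on the $b_n$ side the two arguments are identical. The only genuine difference is in the last step, identifying $\int_{\mathbb{R}}\bar b\,\eta\,\psi$ with $\int_{[0,L)}\bar b\,\eta$: you do this directly via the translation invariance $\bar b(A+L)=\bar b(A)$ of the measure (which you correctly derive from \eqref{L-periodicity} by Riesz), whereas the paper sidesteps the measure-theoretic change of variables by letting $\epsilon\to 0$ in $p_\epsilon$ and applying the bounded convergence theorem ($p_\epsilon\to\mathbf 1_{[0,L)}$ pointwise, $|p_\epsilon|\le 1$). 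Your version is slightly cleaner in that it needs no limiting parameter; the paper's version avoids having to unpack what periodicity of a measure means on half-open intervals. Either way the content is the same.
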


\begin{proof}
By the bounded convergence theorem, we have
\begin{equation}\label{[0,L)}
 \int_{[0,L)} \bar{b}(x)\eta(x)dx =
 \lim_{\ep\to 0} \int_{\R} \bar{b}(x)p_{\ep}(x)\eta(x)dx,
\end{equation}
where $p_\ep$ is a cut-off function defined by
\[
p_\ep(x)=\left\{
\begin{array}{ll}
0 \quad & \hbox{for}\ \ x\in(-\infty,-\ep)\cup [L,\infty)\\
\ep^{-1}(x+\ep) \quad & \hbox{for}\ \ x\in [-\ep,0)\\
1 \quad & \hbox{for}\ \ x\in[0, L-\ep)\\
\ep^{-1}(L-x) \quad & \hbox{for}\ \ x\in [L-\ep,L).
\end{array}\right.
\]
On the other hand, by the $L$-periodicity of $b_n$ and $\eta$, we
have
\[
 \int_{[0,L)} b_n(x)\eta(x)dx =
 \int_{\R} b_n(x)p_{\ep}(x)\eta(x)dx
 \quad\ \ (n=1,2,3,\cdots)
\]
for any $0<\ep\leq L/2$. It follows that
\[
\lim_{n\to\infty} \int_{[0,L)} b_n(x)\eta(x)dx = \lim_{n\to\infty}
\int_{\R} b_n(x)p_{\ep}(x)\eta(x)dx = \int_{\R}
\bar{b}(x)p_{\ep}(x)\eta(x)dx.
\]
Combining this and \eqref{[0,L)}, we obtain the desired identity.
The lemma is proven.
\end{proof}

We remark that, because of the $L-$periodicity of the measure
$\bar{b}\in\overline{\Lambda}(\alpha),$ we can regard $\bar{b}$ as
functional in the space $C^*(\mathbb{R}/L\mathbb{Z}),$ which we
denote by $[\bar{b}],$ in the following sense:
\[
\
\int_{\mathbb{R}/L\mathbb{Z}}[\bar{b}](x)[\eta](x)dx=\int_{[0,L)}\bar{b}(x)\eta(x)dx
\] for any $[\eta]\in C(\mathbb{R}/L\mathbb{Z}).$ Here $\eta(x)$
is  the periodic continuous function on $\mathbb{R}$ associated
with $[\eta].$ Furthermore, by \eqref{bn-to-b-L}, we have if
$b_n\to\bar{b}$ in the weak \,$*$ sense, then $[b_n]\to[\bar{b}]$
in the weak \,$*$ sense in $C^*(\mathbb{R}/L\mathbb{Z}).$

%\begin{Rema} Any element $\bar{b}$ of $\overline{\Lambda}(\alpha)$
%is periodic with respect to $x$ in the sense that for any $g \in
%C[0,L]$, we have
%\[
%\ \int_{[0,L)} \bar{b}(x+nL)g(x)dx =\int_{[0,L)}
%\bar{b}(x)g(x)dx,\,\,\,\,\forall n\in\mathbb Z.
%\]
%\end{Rema}
% For example,
%\begin{equation}\label{h}
%\ h(x)=\alpha L\sum_{k \in\mathbb Z}
%\delta\Big(x-\big(k+\frac{1}{2}\big)L\Big)
%\end{equation}
%is an element of $\overline{\Lambda}(\alpha)$, but it is not an
%element of $\Lambda(\alpha)$. Here $\delta(x)$ is Dirac's delta
%function, which is defined by
%\[
%\ \int_{-\infty}^{+\infty}\delta(x)g(x)dx=g(0),\;\,\;\forall g\in
%C_0^\infty(\mathbb R).
%\]

We consider the following reaction-diffusion equation:
\[
\ u_t=u_{xx}+\bar{b}(x)u(1-u), \,\,\,x\in \mathbb R,
\]
where $\bar{b} \in \overline{\Lambda}(\alpha)$. Here we have not
specified the range of $t,$ but what we have typically in mind is
either $t>0$ or $t\in \mathbb R.$ Obviously
$\Lambda(\alpha)\subset \overline{\Lambda}(\alpha),$ therefore
\eqref{Mainequation} is a generalization of (1).

\begin{Def}
Let $I\subset \mathbb R$ be any open interval. A continuous
function $u(x,t):\mathbb R\times I\to \mathbb R$ is called a
{\textbf{weak solution}} of \eqref{Mainequation} for $t\in I$ (or
a {solution} in the weak sense) if for any $\eta(x,t)\in
C_0^\infty(\mathbb R\times I),$
\[
\ -\int_{\mathbb R\times I}u \eta_t\,dxdt=\int_{\mathbb R\times
I}\big(u\eta_{xx}+\bar{b}(x)u(1-u)\eta\big)\, dxdt,
\]
where the second integral on the right-hand side is understood in
the following sense:
\begin{equation}\label{baslinearfunctional}
\ \int_{\mathbb R\times I}\bar{b}(x)u(1-u)\eta\,
dxdt=\int_I\Big(\sum_{k\in\mathbb
Z}\int_{[kL,(k+1)L)}\bar{b}(x)u(1-u)\eta\, dx\Big) dt.
\end{equation}
\end{Def}

We next consider the following Cauchy problem:
\begin{equation}\label{equationinsection3}
\left\{\begin{array}{ll} u_t=u_{xx}+\bar{b}(x)u(1-u)
\quad\ \ &(x\in\mathbb R,\;t>0),\vspace{5pt}\\
u(x,0)=u_0(x)\geq 0\quad\ \ &(x\in\mathbb R),
\end{array}\right.
\end{equation}
where $u_0\in C(\mathbb R)\cap L^\infty(\mathbb R).$
\begin{Def}
A continuous function $u(x,t):\mathbb R\times(0,\infty)\to \mathbb
R$ is called a {\textbf{mild solution}} of
\eqref{equationinsection3} if
\[
\ \lim_{t\searrow 0}u(x,t)=u_0(x)\,\hbox{ for any }x \in \mathbb R
\]
and if it can be written as
\[
\begin{split}
 u(x,t)=&\int_{\mathbb
R}G(x-y,t)u_0(y)dy  \\ &+\int_0^t\int_{\mathbb
R}G(x-y,t-s)\bar{b}(y)u(y,s)\big(1-u(y,s)\big)\,dyds,
\end{split}
\]
where
\[
G(x,t):=\frac{1}{\sqrt{4\pi t}}\exp\big(-\frac{x^2}{4t}\big).
\]
\end{Def}
As we will show in Section 3, a mild solution of
\eqref{equationinsection3} always exists for any
$\bar{b}\in\overline{\Lambda}(\alpha)$ and $u_0\in C(\mathbb
R)\cap L^\infty(\mathbb R)$ with $u_0\geq 0,$ and it is unique and
is a weak solution.

It is easily seen that, if $u(x,t)$ is a mild solution of
\eqref{equationinsection3}, then for any constant $\tau\geq 0,$
$u(x,x+\tau)$ is a mild solution of \eqref{equationinsection3}
with initial data $u(x,\tau)$ (see Remark
\ref{RemarkForMildSolution}).

We call a function $u(x,t)$ on $\mathbb{R}\times\mathbb{R}$ a mild
solution for $t\in\mathbb{R}$ if, for any $\tau\in\mathbb{R},$
$u(x,t+\tau)$ is mild solution of \eqref{equationinsection3} with
initial data $u_0(x)=u(x,\tau).$

\begin{Def}\label{Definitionoftravellingwave}
A mild solution $u(x,t)$ of \eqref{Mainequation} for $t\in\mathbb
R$ is called a {\textbf{travelling wave}} solution (in the
\textbf{positive} direction) if $0\leq u(x,t)\leq 1$ and if there
exists a constant $T>0$ such that
\[
\ u(x-L,t)=u(x,t+T)\,\,\,\,\,\,\,\,\,\,\,\,\hbox{ for }
(x,t)\in\mathbb R\times\mathbb R,
\]
\[ \ \lim_{x  \to -\infty}u(x,t)=1,~~\lim_{x
\to +\infty}u(x,t)=0 \quad \hbox{locally uniformly in }\,
t\in\mathbb R.
\]It is called a \textbf{travelling wave} solution (in the
\textbf{negative} direction) if
\[
\ u(x+L,t)=u(x,t+T)\,\,\,\,\,\,\,\,\,\,\,\,\hbox{ for }
(x,t)\in\mathbb R\times\mathbb R,
\]
\[ \ \lim_{x  \to -\infty}u(x,t)=0,~~\lim_{x
\to +\infty}u(x,t)=1 \quad \hbox{locally uniformly in
}\,t\in\mathbb R.
\]
\end{Def}
\fbox{Change begins}\begin{Rema}\label{Remarkfor2directionTW} In
what follows, unless otherwise specified, by a travelling wave we
usually mean the one in the positive direction.
\end{Rema}\fbox{Change ends}
Here we call the quantity $c:=L/T$ the {\it speed} (or the
\emph{average speed} or the {\it effective speed}) of the
travelling wave solution $u(x,t)$.

Berestycki and Hamel \cite{BH}, Berestycki, Hamel and Rogues
\cite{s1} and Weinberger \cite{s8} established the existence of
the minimal speed of travelling wave solutions for general
monostable nonlinearities $f(x,u)$ satisfying certain conditions,
and they also gave an eigenvalue characterization of the minimal
speed (see (\ref{spreadingspeed}) below). Here, for
$f(x,u)=b(x)u(1-u)$, if $b\in\Lambda(\alpha),$ it is not difficult
to see that $f(x,u)$ satisfies the assumptions in \cite{BH,s1,s8}.
So for equation \eqref{firstequation}, we know that for any $b \in
\Lambda(\alpha)$, there exists the minimal travelling wave speed $
c^*(b)>0$ in the following sense:
\[
\ \left\{%
\begin{array}{ll}
    c\geq c^*(b)\,\,\,\,\,\,\,\,\Rightarrow & \hbox{There exists travelling wave with speed $c$\,;} \\
    0\leq c<c^*(b)\Rightarrow & \hbox{No travelling wave with speed $c$ exists.} \\
\end{array}%
\right.
\]
To be more precise, $c^*(b)$ is defined to be the minimal
travelling wave speed in the positive direction. As mentioned in
Remark \ref{Remarkfor2directionTW}, one can also define the
minimal travelling wave speed in the negative direction, which one
may call $\tilde{c}^*(b).$ As we will explain in Theorem
\ref{cstarbequalcestarb}, we always have $c^*(b)=\tilde{c}^*(b)$,
even when $b(x)$ is not symmetric, therefore we do not need to
distinguish the two minimal wave speeds.

As we have mentioned earlier, $c^*(b)$ also coincides with the
so-called \lq\lq spreading speed" of expanding fronts for
\eqref{Mainequation}. Here, we define the spreading speed as
follows:
\begin{Def}\label{defofspreadingspeed}
A quantity $c^{**}(\bar{b})>0$ is called the \textbf{spreading
speed} (in the \textbf{positive} direction) if for any nonnegative
initial data $u_0\not\equiv 0$ with compact support, the mild
solution $u(x,t,u_0)$ of \eqref{Mainequation} satisfies that
\begin{enumerate}
\item[{\rm(i)}] $\lim\limits_{t\to \infty}u(x,t,u_0)=0  $\,\,
uniformly in $\{x>ct\}$ \,\,\,\,if \,\,$c> c^{**}(\bar{b})$,
\item[{\rm(ii)}] $\lim\limits_{t\to \infty}u(x,t,u_0)=1  $\,\,
uniformly in $\{0<x<ct\}$ \,\,\,\,if\,\, $0<c< c^{**}(\bar{b})$.
\end{enumerate}
A quantity $\tilde{c}^{**}(\bar{b})>0$ is called the
\textbf{spreading speed} (in the \textbf{negative} direction) if
for any nonnegative initial data $u_0\not\equiv 0$ with compact
support, the mild solution $u(x,t,u_0)$ of \eqref{Mainequation}
satisfies that
\begin{enumerate}
\item[{\rm(i)}] $\lim\limits_{t\to \infty}u(x,t,u_0)=0  $
uniformly in $\{x<-ct\}$ \,\,if \,\,$c> \tilde{c}^{**}(\bar{b})$,
\item[{\rm(ii)}] $\lim\limits_{t\to \infty}u(x,t,u_0)=1  $
uniformly in $\{-ct<x<0\}$ \,\,if\,\, $0<c<
\tilde{c}^{**}(\bar{b})$.
\end{enumerate}
\end{Def}
\fbox{Change begins}
\begin{Rema}
Basic properties of the spreading speed in general periodic
environments are studied in \cite{BH,s1,BHN,s8}. It is known, at
least for smooth $b$, that the spreading speed $c^{**}(b)$
coincides with the minimal wave speed $c^*(b)$ (and, similarly,
$\tilde{c}^{**}(b)=\tilde{c}^*(b)$). As we will show later in
Theorem \ref{cstarbequalcestarb}, we have
$c^{**}(\bar{b})=\tilde{c}^{**}(\bar{b})$ for any
$\bar{b}\in\overline{\Lambda}(\alpha).$
\end{Rema}
It is known that in the \lq\lq leading edge", namely the area
where $u\approx 0,$ we have the asymptotic expression
\begin{equation}\label{AsymptoticExpression}
u(x,t)\sim e^{-\lambda(x-ct)}\psi(x),
\end{equation} where $\psi(x+L)\equiv \psi(x)>0,$ and $\lambda>0$
is some constant. Substituting \eqref{AsymptoticExpression} into
equation (\ref{firstequation}), we obtain the identity
\begin{equation}\label{IdentityForB}
-\psi''(x)+2\lambda\psi'(x)-b(x)\psi(x)=(\lambda^2-\lambda c
)\psi(x).
\end{equation}
This observation motivates us to introduce the following operator,
which generalizes the operator on the left-hand side of
\eqref{IdentityForB} to the case where $b(x)$ is a measure:
\begin{Def} For $\bar{b}\in\overline{\Lambda}(\alpha),$ we define
an (unbounded) operator $-L_{\lambda,\bar{b}}$ on the Banach space
$\{\psi\in C(\mathbb{R})\,|\,\psi(x)=\psi(x+L)\}$ with
$\|\psi\|=\max_{x\in \mathbb{R}}|\psi(x)|$ as follows:
\[
\
-L_{\lambda,\bar{b}}\psi(x)=-\psi''(x)+2\lambda\psi'(x)-\bar{b}(x)\psi(x).
\] Here the derivatives are understood in the \lq\lq weak sense" by which we mean that
$-L_{\lambda,\bar{b}}\psi=g$ if and only if, for any $\varphi\in
C_0^{\infty}(\mathbb{R}),$
\[
\
\int_\mathbb{R}(-\varphi''-2\lambda\varphi'-\bar{b}\varphi)\,\psi\,
dx=\int_{\mathbb{R}}\varphi g \,dx.
\]
%We know that $D(-L_{\lambda,\bar{b}})\neq\emptyset$ since $0\in
%D(-L_{\lambda,\bar{b}}).$
\end{Def}
 \fbox{Change ends}

%
% where $\mu(c,\lambda,b)$ is the principal eigenvalue of the
%operator
%\begin{equation}\label{EigenvalueProblem}
%-L_{\lambda,c,b}\psi=-\psi''(x)+2\lambda\psi'(x)-(\lambda^2-\lambda
%c +b(x))\psi(x)
%\end{equation} under the periodicity condition $\psi(x+L)\equiv\psi(x)$.
%
\begin{Def}\label{definitionofprincipaleigenvalue}
For $\bar{b}\in\overline{\Lambda}(\alpha)$, $\lambda>0,$ we call
$\mu(\lambda,\bar{b})$ the {\textbf{principal eigenvalue}} of the
operator
$-L_{\lambda,\bar{b}}:=-\psi''(x)+2\lambda\psi'(x)-\bar{b}(x)\psi(x)$
if there exists a positive continuous function $\psi$ with
$\psi(x)\equiv\psi(x+L)$ satisfying
\begin{equation}\label{identityofmulambdabbar}
\
-\psi''(x)+2\lambda\psi'(x)-\bar{b}(x)\psi(x)=\mu(\lambda,\bar{b})\psi
\end{equation} in the weak sense. Here $\psi$ is called the
\textbf{principal eigenfunction}.
\end{Def}
For the principal eigenvalue, the following proposition holds. We
will prove it in Section \ref{pulsating travelling waves}
\fbox{Change begins} by converting the problem
\eqref{identityofmulambdabbar} into a more regular eigenvalue
problem for a compact positive operator. \fbox{Change ends}

\begin{pro}\label{principaleigenvalueisunique}
For any $\bar{b}\in\overline{\Lambda}(\alpha)$ and $\lambda>0,$
the principal eigenvalue $\mu(\lambda,\bar{b})$ exists, and it is
unique and simple.
\end{pro}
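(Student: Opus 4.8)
The plan is to convert the weak eigenvalue problem \eqref{identityofmulambdabbar} into a fixed-point/eigenvalue problem for a compact, strongly positive operator on the space $E:=\{\psi\in C(\mathbb R)\,|\,\psi(x)=\psi(x+L)\}$, so that the Krein--Rutman theorem applies and delivers existence, uniqueness and simplicity of the principal eigenvalue in one stroke. First I would fix a large constant $K>0$ (to be chosen so that $-\psi''+2\lambda\psi'+K\psi$ is invertible with positive inverse), and rewrite \eqref{identityofmulambdabbar} as $-\psi''+2\lambda\psi'+K\psi=(\mu+K)\psi+\bar b(x)\psi$. The operator $\psi\mapsto -\psi''+2\lambda\psi'+K\psi$ on $E$ (with domain $C^2_{per}$) has a bounded inverse $T_K$: indeed the corresponding ODE $-v''+2\lambda v'+Kv=f$ has a unique $L$-periodic solution for any $f\in E$, given by an explicit Green's function $g_K(x,y)$ on $\mathbb R/L\mathbb Z$ which is continuous and strictly positive for $K>0$ (one builds it from the exponential solutions $e^{r_\pm x}$ of the homogeneous equation, $r_\pm=\lambda\pm\sqrt{\lambda^2+K}$, periodized; positivity follows from the maximum principle). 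Thus $T_K$ is compact on $E$ by Arzelà--Ascoli (the Green's operator maps bounded sets to equicontinuous bounded sets) and strongly positive.

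Next I would handle the measure term. Define the linear operator $M_{\bar b}:E\to E$ by
\[
(M_{\bar b}\psi)(x):=\int_{\mathbb R}g_K(x,y)\,\bar b(y)\,\psi(y)\,dy,
\]
where the integral is interpreted à la \eqref{baslinearfunctional}, i.e. summed over the periodicity cells; by Lemma~\ref{lem:A} (with $\eta(\cdot)=g_K(x,\cdot)\psi(\cdot)$, which is continuous and $L$-periodic in the second variable, hence trivially satisfies the summability condition after restricting to one period) this is well defined, and in fact $M_{\bar b}$ is compact and strongly positive for the same Arzelà--Ascoli reasons — here one uses joint continuity of $g_K$ and the uniform bound $\int_{[0,L)}\bar b\,dx=\alpha L$ coming from $\bar b\in\overline\Lambda(\alpha)$. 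Setting $\Phi:=T_K+M_{\bar b}$, a weak solution $\psi>0$ of \eqref{identityofmulambdabbar} with eigenvalue $\mu$ corresponds exactly to $\Phi\psi=(\mu+K)^{-1}\psi$; conversely an eigenfunction of $\Phi$ with positive eigenvalue, being a fixed point of a positivity-improving operator, is (up to sign) strictly positive and automatically $L$-periodic and continuous, and bootstrapping the integral identity shows it solves \eqref{identityofmulambdabbar} in the weak sense with $\mu=(\text{eigenvalue})^{-1}-K$.

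Now $\Phi$ is a compact, strongly positive operator on the ordered Banach space $E$ with the cone of nonnegative periodic functions (which has nonempty interior), so the Krein--Rutman theorem gives a spectral radius $\rho(\Phi)>0$ that is a simple eigenvalue with a strictly positive eigenfunction, and no other eigenvalue has a positive eigenfunction. Translating back, $\mu(\lambda,\bar b):=\rho(\Phi)^{-1}-K$ is the principal eigenvalue, it exists, and it is unique (no other $\mu$ admits a positive periodic eigenfunction) and simple (one-dimensional eigenspace). I expect the main obstacle to be the careful treatment of the measure term $\bar b\psi$: one must verify that $M_{\bar b}$ genuinely maps $E$ into $E$ (continuity in $x$ of $x\mapsto\int g_K(x,y)\bar b(y)\psi(y)\,dy$, which needs uniform continuity of $g_K$ together with the finite total mass of $\bar b$ per period and dominated convergence against the measure), that the "weak sense" of the derivatives in Definition of $-L_{\lambda,\bar b}$ is exactly captured by the Green's-function integral equation (an integration-by-parts/duality check using the defining identity for $-L_{\lambda,\bar b}$), and that strong positivity is not destroyed when $\bar b$ degenerates to Dirac masses — but since $T_K$ alone is already strongly positive, $\Phi\ge T_K$ inherits it regardless of $\bar b$. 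The rest is routine.
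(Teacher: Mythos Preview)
Your overall strategy---reduce to a compact strongly positive operator and invoke Krein--Rutman---matches the paper's announced plan, but the implementation differs. The paper does not use an elliptic resolvent; instead it takes the time-$t$ map $\Phi_t$ of the linear parabolic equation $u_t=u_{xx}+\bar b(x)u$, conjugates by $e^{\lambda x}$ to obtain $\mathcal{L}_t^\lambda(a):=e^{\lambda x}\Phi_t(e^{-\lambda x}a)$ on $L$-periodic continuous functions, shows $\mathcal{L}_t^\lambda$ is compact and strongly positive, and checks (Proposition~\ref{uniqueeigenvalue}) that $\psi>0$ solves $-L_{\lambda,\bar b}\psi=\mu\psi$ iff $\mathcal{L}_t^\lambda\psi=e^{(\lambda^2-\mu)t}\psi$. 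Existence of a positive eigenfunction is obtained separately, by approximating $\bar b$ with smooth $b_n$ and using the uniform $H^1_{per}$ bounds of Section~4 to extract a limit (Proposition~\ref{convergenceofeigenvalue}). The parabolic route leans on the well-posedness theory of Section~3 but sidesteps any direct Green's-function construction against the measure; your elliptic route would be more self-contained but needs that construction done carefully.

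There is, however, a genuine algebraic error in your reduction. From $-\psi''+2\lambda\psi'+K\psi=(\mu+K)\psi+\bar b\,\psi$ one gets, after applying $T_K$,
\[
(I-M_{\bar b})\psi=(\mu+K)\,T_K\psi,
\]
which is \emph{not} $(T_K+M_{\bar b})\psi=(\mu+K)^{-1}\psi$; your $\Phi=T_K+M_{\bar b}$ actually encodes the wrong problem $-\psi''+2\lambda\psi'-(\mu+K)\bar b\,\psi=\mu\psi$. The repair is straightforward: take $K$ large enough that $\|M_{\bar b}\|<1$ (possible since $\sup_{x,y}g_K(x,y)\to 0$ as $K\to\infty$ while $\bar b$ has mass $\alpha L$ per period), set $R:=(I-M_{\bar b})^{-1}T_K$, and apply Krein--Rutman to $R$, which is compact (because $T_K$ is) and strongly positive (because $(I-M_{\bar b})^{-1}=\sum_{n\ge0}M_{\bar b}^{\,n}\ge I$ is positivity-preserving and $T_K$ is already strongly positive). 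Then $\mu(\lambda,\bar b)=\rho(R)^{-1}-K$, and existence, uniqueness and simplicity follow as you intended.
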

\fbox{Change begins}We also note that the principal eigenfunction
of \eqref{identityofmulambdabbar} belongs to
$H_{loc}^1(\mathbb{R}),$ as we will see in Subsection 4.2. Now
observe that $\lambda^2-\lambda c$ in \eqref{IdentityForB} is a
constant and that $\psi>0.$ Therefore, \eqref{IdentityForB}
implies that $\mu(\lambda,b)=\lambda^2-\lambda c$ if $b$ is
smooth. In view of this, we define the following quantities when
$\bar{b}$ is a general measure. \fbox{Change ends}
\begin{Def}\label{Defofcestar}
We define the minimal speed  in the positive direction
$c^*(\bar{b})$ and a related value $c_e^*(\bar{b})$ as follows:
\[
\begin{split}
 c^*(\bar{b})&:=\inf\{c>0\,|\,\hbox{traveling wave in the positive direction with speed $c$ exists}\}\\
 c_e^*(\bar{b})&:=\inf\{c>0 \,| \,\exists \lambda>0 \hbox{ such that }\mu(\lambda,\bar{b})=\lambda^2-\lambda c \}.
\end{split}
\]
Similarly, the minimal speed in the negative direction
$\tilde{c}^*(\bar{b})$ and $\tilde{c}_e^*(\bar{b})$ as
\[
\begin{split}
 \tilde{c}^*(\bar{b})&:=\inf\{c>0\,|\,\hbox{traveling wave in the negative direction with speed $c$ exists}\}\\
 \tilde{c}_e^*(\bar{b})&:=\inf\{c>0 \,| \,\exists \lambda>0 \hbox{ such that }\mu(-\lambda,\bar{b})=\lambda^2-\lambda c \}.
\end{split}
\]
\end{Def}

\fbox{Change begins}

If $b(x)$ is a smooth nonnegative function, then by the results of
\cite{BH,s1,BHN} or by those of \cite{s8}, the following
proposition holds.
\begin{pro}[\cite{BH,s1,BHN}, \cite{s8}]\label{cstarbequalscestarbforsmoothb}
For smooth $b\in\Lambda(\alpha)$, we have
\begin{equation}\label{spreadingspeed}
c^{**}(b)=c^*(b)=c_e^*(b),~~~~~
\tilde{c}^{**}(b)=\tilde{c}^*(b)=\tilde{c}_e^*(b).
\end{equation}
\end{pro}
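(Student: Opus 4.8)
\noindent\textit{Proof proposal.}
The plan is to deduce this proposition directly from \cite{BH, s1, BHN, s8} once we check that our nonlinearity fits their framework and that the quantity $c_e^*(b)$ of Definition \ref{Defofcestar} coincides with the eigenvalue characterization of the minimal wave speed used there. For smooth $b\in\Lambda(\alpha)$ there is essentially nothing new to prove; the work is in the translation of notation.

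First I would verify the hypotheses. For smooth $b\in\Lambda(\alpha)$ the function $f(x,u)=b(x)u(1-u)$ is smooth, $L$-periodic in $x$, vanishes at $u=0$ and at $u=1$, is nonnegative on $[0,1]$, and satisfies the KPP-type inequality $0\le f(x,u)\le f_u(x,0)\,u=b(x)u$ for $u\in[0,1]$; moreover $f_u(x,0)=b(x)\ge 0$ and $\int_{[0,L)}b\,dx=\alpha L>0$, so the principal eigenvalue of the linearization at $u\equiv 0$ is positive (it dominates the Rayleigh quotient of $\psi\equiv 1$, which equals $\alpha$). These are exactly the structural conditions under which \cite{BH, s1} construct a minimal pulsating-wave speed with the stated dichotomy and under which \cite{BHN, s8} identify that speed with the spreading speed. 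Specializing their results -- stated in $\mathbb{R}^n$ with general periodic elliptic operators -- to $n=1$ and $A\equiv\mathrm{Id}$ gives the existence parts of \eqref{spreadingspeed}.

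Next I would match the eigenvalue problems. Linearizing \eqref{firstequation} at $u\equiv 0$ and substituting the leading-edge ansatz $u=e^{-\lambda(x-ct)}\phi(x)$ with $\phi$ positive and $L$-periodic yields precisely $-\phi''+2\lambda\phi'-b\phi=(\lambda^2-\lambda c)\phi$, i.e. $\mu(\lambda,b)=\lambda^2-\lambda c$ in the sense of Definition \ref{definitionofprincipaleigenvalue}. Thus the speed attached to a given $\lambda>0$ is $c(\lambda)=\lambda-\mu(\lambda,b)/\lambda$, which is exactly the ratio $k(\lambda)/\lambda$ appearing in the Berestycki--Hamel formula, with $k(\lambda):=\lambda^2-\mu(\lambda,b)$ the principal eigenvalue of their operator. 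Consequently $c_e^*(b)=\inf_{\lambda>0}c(\lambda)=\min_{\lambda>0}k(\lambda)/\lambda=c^*(b)$, the last two equalities being the cited characterization; combined with $c^{**}(b)=c^*(b)$ from \cite{BHN, s8} this gives the positive-direction half of \eqref{spreadingspeed}. Along the way one records the elementary facts that $c(\lambda)\to+\infty$ as $\lambda\to 0^+$ (since $\mu(0,b)<0$) and as $\lambda\to\infty$ (since $b\ge 0$ forces $\mu(\lambda,b)\le\mu(\lambda,0)=0$, so $c(\lambda)\ge\lambda$), whence the infimum is attained at some interior $\lambda^*>0$; these are standard and also contained in the cited works.

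Finally, the negative-direction identities follow from the positive-direction ones by the reflection $x\mapsto -x$. Put $\check b(x):=b(-x)$; one checks $\check b\in\Lambda(\alpha)$, and a change of variables shows that $u(x,t)$ is a positive-direction travelling wave (resp.\ spreading front) for $\check b$ if and only if $u(-x,t)$ is a negative-direction one for $b$, and that $\mu(-\lambda,b)=\mu(\lambda,\check b)$ for every $\lambda$. Hence $\tilde c^*(b)=c^*(\check b)$, $\tilde c^{**}(b)=c^{**}(\check b)$ and $\tilde c_e^*(b)=c_e^*(\check b)$, and applying the positive-direction identity to $\check b$ yields the second half of \eqref{spreadingspeed}. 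The main obstacle will be the bookkeeping in the second step: the cited papers use sign and normalization conventions for the principal eigenvalue that differ from ours and work in a far more general setting, so one must be careful in re-expressing their $\min_\lambda k(\lambda)/\lambda$ as our $c_e^*(b)$. Everything else is routine.
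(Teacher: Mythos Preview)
Your proposal is correct and matches the paper's treatment: the paper does not give its own proof of this proposition but simply cites \cite{BH,s1,BHN,s8} as supplying it, after noting (just before stating the proposition) that $f(x,u)=b(x)u(1-u)$ satisfies the relevant hypotheses---exactly the verification you outline. One small point of comparison: for the negative-direction identities you use the reflection $x\mapsto -x$ and the auxiliary coefficient $\check b$, whereas the paper (later, in the proof of Theorem~\ref{cstarbequalcestarb}) observes directly that $-L_{\lambda,b}$ and $-L_{-\lambda,b}$ are mutually adjoint and hence share the same principal eigenvalue, giving $\mu(\lambda,b)=\mu(-\lambda,b)$; either route works for the present proposition.
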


\fbox{Change ends}

 We will see later that conclusion of above proposition remain to
 hold when $b$ is a measure.
\subsection{Main results}
We are now ready to present our main results.

It is shown in \cite{s1} that for any $b \in \Lambda(\alpha)$, we
have $c^*(b)\geq 2\sqrt{\alpha}=c^*(\alpha).$ Our first theorem
gives an upper bound on $c^*(b):$
\begin{pro}\label{boundedness}
For any $b \in \Lambda(\alpha)$,
\begin{equation}\label{boundednessinequality}
2\sqrt{\alpha}\leq c^*(b)\leq 2\sqrt{\alpha+\alpha^2L^2}.
\end{equation}
Moreover, if $b\not\equiv \alpha,$ then $2\sqrt{\alpha}<c^*(b).$
\end{pro}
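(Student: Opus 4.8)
The plan is to reduce the whole statement to two one-variable estimates on the principal eigenvalue $\mu(\lambda,b)$. By Proposition~\ref{cstarbequalscestarbforsmoothb} we have $c^*(b)=c_e^*(b)$ for smooth $b$, and by Definition~\ref{Defofcestar}, noting that $\mu(\lambda,b)\le 0$ makes the corresponding speed $(\lambda^2-\mu(\lambda,b))/\lambda$ automatically positive,
\[
c^*(b)=\inf_{\lambda>0}c(\lambda),\qquad c(\lambda):=\lambda-\frac{\mu(\lambda,b)}{\lambda}.
\]
I would then extract the needed information about $\mu(\lambda,b)$ from two manipulations of the eigenvalue equation $-\psi''+2\lambda\psi'-b\psi=\mu(\lambda,b)\psi$, where $\psi=\psi_\lambda$ is the positive $L$-periodic principal eigenfunction (of class $C^2$ since $b\in C^1$).

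\emph{Lower bound and strictness.} Writing $\psi_\lambda=e^{\phi}$ with $\phi$ $L$-periodic, dividing the equation by $\psi_\lambda$ and integrating over one period (the $\phi''$ and $2\lambda\phi'$ terms vanish, and $\int_0^Lb=\alpha L$) gives
\[
\mu(\lambda,b)=-\alpha-\frac1L\int_0^L(\phi')^2\,dx\le-\alpha,
\]
so $c(\lambda)=\lambda+\alpha/\lambda+\frac{1}{\lambda L}\int_0^L(\phi')^2\,dx\ge\lambda+\alpha/\lambda\ge2\sqrt\alpha$, which yields $c^*(b)\ge2\sqrt\alpha$ (this is also the bound cited from \cite{s1}). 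If $\int_0^L(\phi')^2=0$ for some $\lambda$, then $\psi_\lambda$ is constant and the equation forces $b\equiv-\mu(\lambda,b)\equiv\alpha$; hence if $b\not\equiv\alpha$ then $\mu(\lambda,b)<-\alpha$ and $c(\lambda)>\lambda+\alpha/\lambda\ge2\sqrt\alpha$ for every $\lambda>0$. Since $c(\lambda)\to+\infty$ as $\lambda\to0^+$ and as $\lambda\to+\infty$, and $\lambda\mapsto\mu(\lambda,b)$ is continuous (a simple eigenvalue), the infimum defining $c_e^*(b)$ is attained at some $\lambda_*>0$, so $c^*(b)=c(\lambda_*)>2\sqrt\alpha$.

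\emph{Upper bound.} Here I would instead multiply the eigenvalue equation by $\psi_\lambda$ and integrate over a period; the drift term drops because $\int_0^L\psi_\lambda'\psi_\lambda=0$, leaving the Rayleigh identity
\[
\mu(\lambda,b)=\frac{\int_0^L\big((\psi_\lambda')^2-b\,\psi_\lambda^2\big)\,dx}{\int_0^L\psi_\lambda^2\,dx}\ \ge\ \mu_0(b):=\inf_{0\ne\psi\in H^1_{per}}\frac{\int_0^L\big((\psi')^2-b\,\psi^2\big)\,dx}{\int_0^L\psi^2\,dx}.
\]
Thus $c(\lambda)\le\lambda-\mu_0(b)/\lambda$ for all $\lambda>0$ and, since $\mu_0(b)\le-\alpha<0$ (test with $\psi\equiv1$),
\[
c^*(b)=c_e^*(b)\le\inf_{\lambda>0}\Big(\lambda+\frac{-\mu_0(b)}{\lambda}\Big)=2\sqrt{-\mu_0(b)}.
\]
It therefore remains to prove $-\mu_0(b)\le\alpha+\alpha^2L^2$, i.e.\ the elementary inequality
\[
\int_0^L b\,\psi^2\,dx\le\int_0^L(\psi')^2\,dx+(\alpha+\alpha^2L^2)\int_0^L\psi^2\,dx\qquad\text{for every }L\text{-periodic }\psi,
\]
for which I would use that each value of $\psi^2$ differs from $\min_{[0,L]}\psi^2$, hence from $\frac1L\int_0^L\psi^2\,dx$, by at most the total variation $2\int_0^L|\psi||\psi'|\,dx$ of $\psi^2$ over a period; multiplying $\max_{[0,L]}\psi^2\le\frac1L\int_0^L\psi^2\,dx+2\int_0^L|\psi||\psi'|\,dx$ by $\int_0^Lb=\alpha L$ (and using $b\ge0$), then Cauchy--Schwarz together with $2\alpha L\,\|\psi\|_{L^2}\|\psi'\|_{L^2}\le\|\psi'\|_{L^2}^2+\alpha^2L^2\|\psi\|_{L^2}^2$, gives the claim.

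I do not expect a real obstacle. The only non-elementary inputs are the identity $c^*(b)=c_e^*(b)$ and the representation $c_e^*(b)=\min_{\lambda>0}c(\lambda)$ with the minimum attained, which I take from Proposition~\ref{cstarbequalscestarbforsmoothb} and \cite{BH,s1,s8}; continuity of $\lambda\mapsto\mu(\lambda,b)$ is used only in the strictness part. The one point worth highlighting is that passing to the Rayleigh quotient discards all $\lambda$-dependence from below and replaces the non-self-adjoint $\mu(\lambda,b)$ by the self-adjoint $\mu_0(b)$, after which the constraint $\int_0^Lb=\alpha L$ enters through a single $L^\infty$-bound on $\psi^2$.
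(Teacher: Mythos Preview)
Your proof is correct and follows essentially the same route as the paper. Both arguments reduce to the two-sided estimate $-\alpha-\alpha^2L^2\le\mu(\lambda,b)\le-\alpha$ (the paper's Lemma~\ref{Boundednessofmu}) and then plug this into $c_e^*(b)=\min_{\lambda>0}(\lambda^2-\mu(\lambda,b))/\lambda$; the upper bound on $\mu$ comes from dividing by $\psi$ and integrating (the paper's identity~\eqref{dividingbypsiandintegrate}), and the lower bound from the Rayleigh quotient and the integral inequality $\int_0^L(\psi')^2-\int_0^L b\psi^2\ge-(\alpha+\alpha^2L^2)\int_0^L\psi^2$ (the paper's Lemma~\ref{IntegralInequality}). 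Your proof of that integral inequality---bounding $\max\psi^2$ by the mean plus the total variation of $\psi^2$---is a minor variant of the paper's double-averaging argument over $(x_1,x_2)\in[0,L]^2$, but the two are equivalent and yield the identical constant. You also spell out the strictness case ($b\not\equiv\alpha$) explicitly, which the paper's proof section leaves implicit in~\eqref{dividingbypsiandintegrate}.
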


As we mentioned above, the inequality $2\sqrt{\alpha}\leq c^*(b)$
is found in \cite{s1}. The main novelty of this theorem is the
upper bound.

While the lower bound in \eqref{boundednessinequality} is sharp
since the equality holds for $b\equiv \alpha,$ it has not been
known whether $c^*(b)$ attains its maximum in $\Lambda(\alpha)$ or
not. The next two theorems shows that $c^*(b)$ does not attain its
maximum in $\Lambda(\alpha)$ but it does in the extended class
$\overline{\Lambda}(\alpha).$
\begin{them}[Minimal speed]\label{cstarbequalcestarb}
For any measure $\bar{b}\in\overline{\Lambda}(\alpha),$ there
exists $c^*(\bar{b})>0$ such that a travelling wave in the
positive direction with speed $c$ exists if and only if\, $c\geq
c^*(\bar{b})$. In other words, $c^*(\bar{b})$ is the minimal
travelling wave speed in the positive direction. Furthermore,
\[
\ c^*(\bar{b})=c_e^*(\bar{b}).
\]
Similarly, the minimal travelling wave speed in the negative
direction $\tilde{c}^*(\bar{b})$ exists and
$\tilde{c}^*(\bar{b})=\tilde{c}_e^*(\bar{b}).$ Furthermore, for
any $\bar{b}\in\overline{\Lambda}(\alpha)$,
\[
\ c^*(\bar{b})=\tilde{c}^*(\bar{b}).
\]

%Moreover, $c^*(\bar{b})$ is also the {\textbf spreading speed} in
%the positive direction of equation \eqref{Mainequation} in the
%sense that, for any nonnegative initial data $u_0\not\equiv 0$
%with compact support, the solution $u(x,t,u_0)$ of
%\eqref{Mainequation} satisfies that
%\begin{enumerate}
%\item[{\rm(i)}] $\lim\limits_{t\to \infty}u(x,t,u_0)=0  $
%uniformly in $x>ct$ \,if\, $c> c^*(b)$.%
%
%\item[{\rm(ii)}] $\lim\limits_{t\to \infty}u(x,t,u_0)=1  $
%uniformly in $0<x<ct$ \,if\, $c< c^*(b)$.%
%
%\end{enumerate}
%Furthermore, $c^*(b)$ is also the minimal travelling wave speed in
%the negative direction and the spreading speed in the negative
%direction.
\end{them}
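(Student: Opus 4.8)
\emph{Strategy and a continuity lemma.} The plan is to reduce everything to the smooth case by approximation. Fix a sequence $\{b_n\}\subset\Lambda(\alpha)$ with $b_n\to\bar b$ in the weak\,$*$ sense, so that Proposition~\ref{cstarbequalscestarbforsmoothb} applies to each $b_n$. The first step I would carry out is to show that $\mu(\lambda,\cdot)$ and $c_e^*(\cdot)$ are continuous under weak\,$*$ limits; concretely, $\mu(\lambda,b_n)\to\mu(\lambda,\bar b)$ uniformly for $\lambda$ in compact subsets of $(0,\infty)$, and hence $c_e^*(b_n)\to c_e^*(\bar b)$. For this I would use the reformulation of \eqref{identityofmulambdabbar} as a Perron eigenvalue problem for a compact positive operator (the reformulation announced just before Proposition~\ref{principaleigenvalueisunique}): for $K$ large, $(-L_{\lambda,\bar b}+K)^{-1}$ is a compact positive operator on $C(\mathbb{R}/L\mathbb{Z})$ whose integral kernel varies continuously with $\bar b$ in the weak\,$*$ topology by Lemma~\ref{lem:A} and the lemma following it; the spectral radius of a compact positive operator depends continuously on the operator, and by Proposition~\ref{principaleigenvalueisunique} this spectral radius determines $\mu$. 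Combined with the bounds of Section~4, which confine the minimizing $\lambda$ to a fixed compact subset of $(0,\infty)$, this yields $c_e^*(b_n)\to c_e^*(\bar b)$, and in particular $0<c_e^*(\bar b)<\infty$.

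\emph{Existence of waves for $c\ge c_e^*(\bar b)$.} Let $c>c_e^*(\bar b)$. Then $c>c_e^*(b_n)=c^*(b_n)$ for all large $n$, so there is a travelling wave $u_n$ of speed $c$ for $b_n$ with $0\le u_n\le 1$; translate in $x$ so that $u_n(0,0)=\tfrac12$. By the equicontinuity estimates of Section~6 applied on finite time windows, the uniform bound $0\le u_n\le1$, and the Arzel\`a--Ascoli theorem, a subsequence converges locally uniformly on $\mathbb{R}\times\mathbb{R}$ to a function $u$ with $0\le u\le1$ and $u(0,0)=\tfrac12$. Passing to the limit in the mild-solution integral identity — the nonlinear term being controlled by Lemma~\ref{lem:A}, whose summability hypothesis holds because the heat kernel $G$ decays exponentially — shows $u$ is a mild solution of \eqref{Mainequation} for $t\in\mathbb{R}$ and that $u(x-L,t)=u(x,t+L/c)$ passes to the limit. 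The remaining, delicate, point is the asymptotics $u(\cdot,t)\to1$ at $-\infty$ and $\to0$ at $+\infty$ locally uniformly in $t$: I would secure this by a squeezing argument, noting that $u$ lies between the ordered periodic steady states $0$ and $1$, is nontrivial since $u(0,0)=\tfrac12$, and, by the strong maximum principle together with the $(x\mapsto x-L,\,t\mapsto t+L/c)$-invariance, must have monotone profiles with the prescribed limits; equivalently, one sandwiches $u$ between the spreading solution of Definition~\ref{defofspreadingspeed} from below and an exponential super-solution built from $e^{-\lambda(x-ct)}\psi_\lambda(x)$ from above. The critical value $c=c_e^*(\bar b)$ then follows by the same compactness argument applied to waves of speeds $c_k\downarrow c_e^*(\bar b)$, the normalization again preventing degeneration.

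\emph{Non-existence below $c_e^*(\bar b)$, and the directional equalities.} Suppose a wave of speed $c<c_e^*(\bar b)$ existed. By the definition of $c_e^*$, together with convexity/monotonicity of $\lambda\mapsto\lambda^2-\lambda c-\mu(\lambda,\bar b)$, one has $\mu(\lambda,\bar b)>\lambda^2-\lambda c$ for all $\lambda>0$, so spreading from a compactly supported datum placed below the wave occurs at speed at least $c_e^*(\bar b)>c$; this is the lower bound $c^{**}(\bar b)\ge c_e^*(\bar b)$, obtained from sub-solutions of the form $e^{-\lambda(x-ct)}\psi_\lambda(x)$ and Definition~\ref{defofspreadingspeed}. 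By comparison the spreading solution stays below the wave, so the wave would converge to $1$ on $\{ct<x<c_e^*(\bar b)t\}$ as $t\to\infty$, contradicting its decay to $0$ as $x-ct\to+\infty$. Hence $c\ge c_e^*(\bar b)$, which with the previous paragraph gives $c^*(\bar b)=c_e^*(\bar b)$ (and, along the way, $c^{**}(\bar b)=c_e^*(\bar b)$). For the two directions, integration by parts shows the formal $L^2(\mathbb{R}/L\mathbb{Z})$-adjoint of $-L_{\lambda,\bar b}$ is $-L_{-\lambda,\bar b}$ (the term $2\lambda\psi'$ becomes $-2\lambda\phi'$, the rest being self-adjoint); since a positive compact operator and its adjoint share the same spectral radius, the resolvent reformulation gives $\mu(-\lambda,\bar b)=\mu(\lambda,\bar b)$ for every $\lambda>0$, whence $\tilde c_e^*(\bar b)=c_e^*(\bar b)$ directly from Definition~\ref{Defofcestar}. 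Running the entire argument above in the negative direction — equivalently, applying it to the reflected measure $x\mapsto\bar b(-x)$, still in $\overline{\Lambda}(\alpha)$ — gives $\tilde c^*(\bar b)=\tilde c_e^*(\bar b)$, and therefore $c^*(\bar b)=c_e^*(\bar b)=\tilde c_e^*(\bar b)=\tilde c^*(\bar b)$.

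\emph{Main obstacle.} The hardest part is the limiting construction of the travelling wave when $\bar b$ is only a measure — above all, establishing the $\pm\infty$ asymptotics of the limit profile uniformly in $t$ without losing the front to the constant states $0$ or $1$, which forces one to combine the Section~6 equicontinuity estimates, the point normalization, and a comparison argument. The non-existence direction for a general measure $\bar b$, where the barrier functions must be built from the merely $H^1_{loc}(\mathbb{R})$ eigenfunctions $\psi_\lambda$, is the second place where genuine care is needed.
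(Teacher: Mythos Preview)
Your approach is viable but genuinely different from the paper's. The paper does \emph{not} build the travelling wave for the measure $\bar b$ by passing to the limit in waves for smooth $b_n$. Instead, it verifies directly that the time-$t$ solution map $Q_t$ of \eqref{Mainequation} on $C(\mathbb{R};[0,1])$ (with the locally uniform topology) satisfies the abstract hypotheses of Weinberger~\cite{s8} (order preservation, periodicity, monostability, equicontinuity, precompactness), which yields the existence of $c^*(\bar b)$ and its coincidence with the spreading speed in one stroke. For the identity $c^*(\bar b)=c_e^*(\bar b)$, the paper sandwiches $Q_t$ between the linear semigroup $\Phi_t$ of $u_t=u_{xx}+\bar b u$ from above and the linear semigroup $\Phi_t^\varepsilon$ of $u_t=u_{xx}+(1-\varepsilon)\bar b u$ from below (for small data), then invokes Weinberger's linear determinacy criterion together with the Krein--Rutman characterization of $\mu(\lambda,\bar b)$ via the compact positive operator $\mathcal L_t^\lambda(a)=e^{\lambda x}\Phi_t(e^{-\lambda x}a)$ on periodic functions. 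The directional equality is handled as you do, via the adjointness $\mu(\lambda,b)=\mu(-\lambda,b)$, but first for smooth $b$ and then transferred to $\bar b$ by the eigenvalue continuity of Proposition~\ref{convergenceofeigenvalue}.

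The trade-off is exactly what you flag as the ``main obstacle'': your route is more self-contained but forces you to establish, by hand, that the limit of the normalized waves $u_n$ retains the correct asymptotics at $\pm\infty$ (a step that is doable via your super/sub-solution sandwich but is delicate in the pulsating setting, where spatial monotonicity is not available). The paper's route buries this difficulty inside Weinberger's machinery, at the cost of having to check the semiflow hypotheses for a measure-valued coefficient --- which is where the well-posedness of Section~3 and the equicontinuity of Section~6 are actually spent. Both approaches rely on the same eigenvalue continuity (your first paragraph is essentially Proposition~\ref{Convergenceofcstarbn}); the difference is whether one approximates at the level of the waves or at the level of the semiflow.
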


\begin{them}[Optimal coefficient]\label{cestarhlargerthancestarb}
\[ \ c^*(h)=\sup_{b\in\Lambda(\alpha)}c^*(b)=\max_{\bar{b}\in\overline{\Lambda}(\alpha)}c^*(\bar{b}).\] Moreover, \[ \
c^*(h)>c^*(b) \quad\hbox{ for any }\,b\in\Lambda(\alpha).
\]
\end{them}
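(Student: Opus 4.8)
The plan is to argue in terms of $c_e^*$ and to invoke Theorem~\ref{cstarbequalcestarb} ($c^*=c_e^*$) only to transfer the conclusion back. For fixed $\lambda>0$ the equation $\mu(\lambda,\bar b)=\lambda^2-\lambda c$ has the unique root $c=\lambda-\mu(\lambda,\bar b)/\lambda$, so Definition~\ref{Defofcestar} reads
\[
c^*(\bar b)=c_e^*(\bar b)=\inf_{\lambda>0}\Big(\lambda-\frac{\mu(\lambda,\bar b)}{\lambda}\Big),
\]
and, by Proposition~\ref{boundedness} and the boundedness statements of Section~4, this infimum is attained by some $\lambda$ lying in a fixed compact subinterval of $(0,\infty)$ as $\bar b$ ranges over $\overline{\Lambda}(\alpha)$. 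The theorem then reduces to the comparison of principal eigenvalues
\[
\mu(\lambda,h)\le\mu(\lambda,\bar b)\qquad(\forall\lambda>0,\ \forall\bar b\in\overline{\Lambda}(\alpha)),
\]
with strict inequality unless $\bar b$ is a translate of $h$ (in particular strict for every smooth $b\in\Lambda(\alpha)$). Indeed, the inequality gives $\lambda-\mu(\lambda,\bar b)/\lambda\le\lambda-\mu(\lambda,h)/\lambda$ for all $\lambda$, hence $c^*(\bar b)\le c^*(h)$ for all $\bar b\in\overline{\Lambda}(\alpha)$ (with equality at $h$), while evaluating at a minimizer $\lambda_h$ of $\lambda\mapsto\lambda-\mu(\lambda,h)/\lambda$ gives, for smooth $b$, $c^*(b)\le\lambda_h-\mu(\lambda_h,b)/\lambda_h<\lambda_h-\mu(\lambda_h,h)/\lambda_h=c^*(h)$.

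To prove the comparison I would use the reduction to a compact positive operator indicated before Proposition~\ref{principaleigenvalueisunique}. First, $\mu(\lambda,\bar b)<0$: integrating \eqref{identityofmulambdabbar} over one period annihilates the $-\psi''$ and $2\lambda\psi'$ terms and leaves $\mu(\lambda,\bar b)\int_0^L\psi=-\int_{[0,L)}\psi\,d\bar b<0$. So it suffices to compare operators at $\mu<0$. For such $\mu$, let $\Gamma_\mu$ be the Green's function on $\mathbb R/L\mathbb Z$ of the constant-coefficient operator $-\partial_x^2+2\lambda\partial_x-\mu$; by translation invariance $\Gamma_\mu=\Gamma_\mu(x-y)$, and $\Gamma_\mu$ is continuous and (by the maximum principle) strictly positive. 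Problem \eqref{identityofmulambdabbar} is equivalent to $\psi=\mathcal K_{\mu,\bar b}\psi$, where $\mathcal K_{\mu,\bar b}f(x)=\int\Gamma_\mu(x-y)f(y)\,d\bar b(y)$ (integral over one period) is a positive compact operator on $C(\mathbb R/L\mathbb Z)$; its spectral radius $r(\mathcal K_{\mu,\bar b})$ is continuous and strictly increasing in $\mu$ on $(-\infty,0)$, with limits $0$ at $-\infty$ and $+\infty$ at $0^-$, and $\mu(\lambda,\bar b)$ is the unique $\mu$ with $r(\mathcal K_{\mu,\bar b})=1$. Two facts finish it. (i) At an interior critical point of $\Gamma_\mu$ one has $\Gamma_\mu''=2\lambda\Gamma_\mu'+(-\mu)\Gamma_\mu=(-\mu)\Gamma_\mu>0$, so $\Gamma_\mu$ has no interior local maximum on the circle; hence $\Gamma_\mu(z)<\Gamma_\mu(0)$ for every $z\not\equiv0$. (ii) Since $\bar b$ carries mass $\alpha L$ per period, $\|\mathcal K_{\mu,\bar b}\|_{C\to C}=\sup_x\int\Gamma_\mu(x-y)\,d\bar b(y)\le\alpha L\,\Gamma_\mu(0)$, whereas $\mathcal K_{\mu,h}$ is rank one with $\mathcal K_{\mu,h}f(x)=\alpha L\,\Gamma_\mu\big(x-\tfrac{L}{2}\big)f\big(\tfrac{L}{2}\big)$, so $f=\Gamma_\mu(\cdot-\tfrac{L}{2})>0$ is an eigenfunction with eigenvalue $\alpha L\,\Gamma_\mu(0)$ and $r(\mathcal K_{\mu,h})=\alpha L\,\Gamma_\mu(0)$. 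Therefore $r(\mathcal K_{\mu,\bar b})\le\|\mathcal K_{\mu,\bar b}\|\le r(\mathcal K_{\mu,h})$ for all $\mu<0$; taking $\mu=\mu(\lambda,\bar b)$ gives $1=r(\mathcal K_{\mu(\lambda,\bar b),\bar b})\le r(\mathcal K_{\mu(\lambda,\bar b),h})$, and strict monotonicity of $r(\mathcal K_{\cdot,h})$ yields $\mu(\lambda,\bar b)\ge\mu(\lambda,h)$. For strictness: $\sup_x\int\Gamma_\mu(x-y)\,d\bar b(y)=\alpha L\,\Gamma_\mu(0)$ would, by compactness of the circle and continuity of $x\mapsto\int\Gamma_\mu(x-y)d\bar b(y)$, be attained at some $x_0$, forcing $\bar b$ to be concentrated at the unique $y\equiv x_0$ with $\Gamma_\mu(x_0-y)=\Gamma_\mu(0)$, i.e.\ $\bar b$ a translate of $h$; otherwise $r(\mathcal K_{\mu,\bar b})<\alpha L\,\Gamma_\mu(0)=r(\mathcal K_{\mu,h})$ and hence $\mu(\lambda,\bar b)>\mu(\lambda,h)$.

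For the identity $\sup_{b\in\Lambda(\alpha)}c^*(b)=c^*(h)$, the inequality $\le$ is the strict bound just established; for $\ge$, take smooth $b_n\in\Lambda(\alpha)$ with $b_n\to h$ weak\,$*$ (mollifications of $h$, rescaled to mass $\alpha L$). Since $[b_n]\to[h]$ weak\,$*$ in $C^*(\mathbb R/L\mathbb Z)$, the operators $\mathcal K_{\mu,b_n}$ converge to $\mathcal K_{\mu,h}$ collectively compactly, so $\mu(\lambda,b_n)\to\mu(\lambda,h)$ for each $\lambda$; combined with the uniform location of the minimizing $\lambda$ this gives $c^*(b_n)=c_e^*(b_n)\to c_e^*(h)=c^*(h)$, hence $\sup_{\Lambda(\alpha)}c^*\ge c^*(h)$. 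Together with the previous paragraph, this proves the theorem.

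The step I expect to be the main obstacle is not the comparison itself but the operator-theoretic groundwork for measure coefficients: the equivalence of \eqref{identityofmulambdabbar} with $\psi=\mathcal K_{\mu,\bar b}\psi$ (with the eigenfunction in $H^1_{loc}$, so that the jump conditions at the atoms of $\bar b$ are meaningful), the compactness and monotone continuous dependence of $r(\mathcal K_{\mu,\bar b})$ on $\mu$, and, above all, the continuity of $\mu(\lambda,\bar b)$ in $\bar b$ under weak\,$*$ convergence together with the uniform two-sided bound on the minimizing exponent $\lambda$. All of this is exactly what Section~4 (with Theorem~\ref{cstarbequalcestarb}) is meant to supply; granting it, the comparison and the limiting argument above go through as sketched.
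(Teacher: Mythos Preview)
Your argument is correct and reaches the same conclusion, but by a genuinely different route than the paper. The paper proves the key Lemma~\ref{cehlargerthancb} by a \emph{convolution test-function} trick: it takes the principal eigenfunction $\psi$ of $-L_{\lambda,h}$, forms $\tilde\psi=b*\psi\in E_L\cap C^2$, and computes that $-\tilde\psi''+2\lambda\tilde\psi'-(\lambda^2-\lambda c+b)\tilde\psi$ equals the strictly positive quantity $b(x)\big[b*\big(\psi(L/2)-\psi\big)\big](x)$ (using Lemma~4.10 that $\psi$ attains its maximum at $L/2$); the pointwise max--min formula of Proposition~\ref{maxminrepresentation} then gives $\mu(\lambda,b)\ge\lambda^2-\lambda(c-\sigma)$ for some $\sigma>0$, hence $c_e^*(b)<c_e^*(h)$. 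The extension to all of $\overline\Lambda(\alpha)$ comes only afterwards, via the weak\,$*$ continuity of Proposition~\ref{Convergenceofcstarbn}.

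You instead bound the spectral radius of the integral operator $\mathcal K_{\mu,\bar b}$ with kernel the periodic Green's function $\Gamma_\mu$, using $\Gamma_\mu(z)<\Gamma_\mu(0)$ for $z\not\equiv0$ to get $r(\mathcal K_{\mu,\bar b})\le\|\mathcal K_{\mu,\bar b}\|\le\alpha L\,\Gamma_\mu(0)=r(\mathcal K_{\mu,h})$, and then read off $\mu(\lambda,\bar b)\ge\mu(\lambda,h)$ from monotonicity of $\mu\mapsto r(\mathcal K_{\mu,\cdot})$. The two arguments are closer than they look---the eigenfunction $\psi$ for $h$ \emph{is} a translate of $\Gamma_\mu$, and the paper's inequality $\psi(L/2)\ge\psi$ is precisely your $\Gamma_\mu(0)\ge\Gamma_\mu$---but your packaging is cleaner in two respects: it yields the eigenvalue comparison $\mu(\lambda,h)\le\mu(\lambda,\bar b)$ directly for every $\bar b\in\overline\Lambda(\alpha)$ (not only smooth $b$), and it identifies the equality case (translates of $h$) transparently. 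The paper's approach, on the other hand, avoids setting up the Green's-function and spectral-radius machinery and works entirely with the differential formulation, which matches the tools already developed in Section~4. Your acknowledged reliance on the Section~4 groundwork (compactness, monotonicity in $\mu$, weak\,$*$ continuity, uniform location of the minimizing $\lambda$) is appropriate; those are exactly the ingredients the paper also uses.
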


\fbox{Change begins}

\begin{them}[Spreading speed]\label{Themforspreadingspeed}
For $\bar{b}\in\overline{\Lambda}(\alpha),$ the spreading speed in
the positive direction $c^{**}(\bar{b})$ and that in the negative
direction $\tilde{c}^{**}(\bar{b})$ exist and
\[
\
c^{**}(\bar{b})=c^*(\bar{b}),\,\,\,\tilde{c}^{**}(\bar{b})=\tilde{c}^*(\bar{b}).
\]Consequently,
\[
\ c^{**}(\bar{b})=\tilde{c}^{**}(\bar{b}).
\]
\end{them}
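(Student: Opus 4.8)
The plan is to reduce Theorem~\ref{Themforspreadingspeed} to the already-established results, rather than prove it from scratch. By Theorem~\ref{cstarbequalcestarb} we already know $c^*(\bar b)=c_e^*(\bar b)$ and the analogous statement in the negative direction, together with $c^*(\bar b)=\tilde c^*(\bar b)$. Hence it suffices to show $c^{**}(\bar b)=c^*(\bar b)$, since the negative-direction identity and the final equality $c^{**}(\bar b)=\tilde c^{**}(\bar b)$ then follow immediately. So the whole content is: the spreading speed of compactly supported data equals the minimal travelling-wave speed, even when $\bar b$ is merely a measure.

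First I would establish the \emph{upper bound}, namely part (i): if $c>c^*(\bar b)$ then $u(x,t,u_0)\to 0$ uniformly in $\{x>ct\}$. The idea is the standard super-solution argument. Pick $\lambda>0$ with $\mu(\lambda,\bar b)=\lambda^2-\lambda c'$ for some $c^*(\bar b)<c'<c$ (possible by the definition of $c_e^*(\bar b)=c^*(\bar b)$ and continuity/monotonicity of $\mu(\lambda,\bar b)$ in $\lambda$, which is part of what is proved around Proposition~\ref{principaleigenvalueisunique}). Let $\psi$ be the corresponding positive periodic principal eigenfunction, which lies in $H^1_{loc}(\mathbb R)$ as noted in the excerpt. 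Then $\bar u(x,t):=\min\{1,\,M e^{-\lambda(x-c't)}\psi(x)\}$ is a (weak/mild) super-solution of \eqref{Mainequation}: away from the cap it satisfies the linearized equation exactly because of \eqref{IdentityForB}-type identity in the weak sense, and dropping the quadratic term only helps since $\bar b\ge 0$; the cap at $1$ is handled because constants are super-solutions of the monostable equation. Choosing $M$ large enough that $\bar u(x,0)\ge u_0(x)$ (possible since $u_0$ is compactly supported and bounded, and $\psi$ is bounded below by a positive constant), the comparison principle for mild solutions — which must be invoked in the weak/measure setting, but is available from the Section 3 machinery — gives $u(x,t)\le M e^{-\lambda(x-c't)}\psi(x)$, which tends to $0$ uniformly on $\{x\ge ct\}$ since $c>c'$.

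Next I would establish the \emph{lower bound}, part (ii): if $0<c<c^*(\bar b)$ then $u(x,t,u_0)\to 1$ uniformly on $\{0<x<ct\}$. Here the natural route is to use the travelling waves themselves as sub-solutions, together with the fact (provable by a Harnack/strong-maximum-principle argument for the Cauchy problem, or by the spreading results already known for the smooth approximants $b_n$) that any nontrivial nonnegative compactly supported datum eventually becomes bounded below on large intervals. Concretely: by Theorem~\ref{cstarbequalcestarb} there is, for each $c'$ with $c<c'<c^*(\bar b)$... actually one wants speeds below $c^*$, so instead the argument is the classical one of squeezing between a shifted travelling wave of speed slightly above $c^*$ from below — one shows that after finite time the solution dominates a suitable translate of a travelling-wave profile $\Phi$ with speed exactly $c^*(\bar b)$ (or a near-critical one), and since that wave converges to $1$ behind its front which moves at speed $c^*>c$, the solution is forced up to $1$ on $\{0<x<ct\}$. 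A clean alternative I would actually prefer: approximate $\bar b$ by $b_n\in\Lambda(\alpha)$ in the weak-$*$ sense, use Proposition~\ref{cstarbequalscestarbforsmoothb} so that $c^{**}(b_n)=c^*(b_n)=c_e^*(b_n)$, invoke continuity of $c_e^*$ under weak-$*$ convergence (which is part of the Section 4/5 analysis, since $c_e^*$ is defined through $\mu(\lambda,\cdot)$ and Lemma~\ref{lem:A} gives the needed continuity of the eigenvalue problem), and pass to the limit in the spreading estimates via the continuous dependence of mild solutions on $\bar b$ (also from Section 3). This gives both bounds simultaneously and pins $c^{**}(\bar b)=c^*(\bar b)$.

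The main obstacle is the lower bound in the genuinely measure-valued case, because the "solution becomes bounded below" step and the travelling-wave sub-solution step both rely on maximum-principle and Harnack-type tools that are classical for smooth coefficients but need to be re-derived, or carefully transferred via approximation, for equations whose zeroth-order coefficient is only a periodic measure. I expect the cleanest treatment to be the approximation argument of the previous paragraph, so the real work is to verify two things uniformly in $n$: (a) the comparison principle and the local equicontinuity estimates (the latter are exactly the lemmas promised for Section~6) hold with constants independent of $n$, so that solutions $u_n$ of the approximating problems converge locally uniformly to $u$; and (b) $c_e^*(b_n)\to c_e^*(\bar b)$, i.e.\ the principal eigenvalue $\mu(\lambda,b_n)\to\mu(\lambda,\bar b)$ locally uniformly in $\lambda$, which follows from the weak-$*$ convergence $[b_n]\to[\bar b]$ in $C^*(\mathbb R/L\mathbb Z)$ together with the compactness of the resolvent operators reformulated as in the remark preceding Proposition~\ref{principaleigenvalueisunique}. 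Once (a) and (b) are in hand, the spreading estimates for $u$ on $\{x>ct\}$ and $\{0<x<ct\}$ are inherited from those for $u_n$ by a standard diagonal/limit argument, and the theorem follows.
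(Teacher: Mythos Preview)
Your overall reduction is right: the whole content is $c^{**}(\bar b)=c^*(\bar b)$ (and its mirror), and your exponential super-solution for the upper bound is correct and essentially how one would do it by hand.

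The paper, however, takes a different route. It does \emph{not} prove the spreading estimates by super/sub-solutions or by approximation with smooth $b_n$. Instead, in Section~5.1 it works directly with the measure-valued problem: it shows that the time-$t$ map $Q_t$ of \eqref{equationinsection3} is a monotone, periodic, monostable, continuous, precompact semiflow on $C(\mathbb R;[0,1])$ (properties (i)--(v) there), and then simply invokes Weinberger's abstract spreading-speed theory \cite{s8} (their Theorem~\ref{them5.6}) to get $c^{**}(\bar b)=c^*(\bar b)$ in one stroke. Theorem~\ref{Themforspreadingspeed} is then a two-line corollary of Theorem~\ref{them5.6} and Theorem~\ref{cstarbequalcestarb}. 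The advantage is that the only analytic work needed is the equicontinuity/compactness lemmas of Section~6 and the local-uniform continuous dependence (Proposition~\ref{lcontinuous}); no Harnack, no sub-solution, no limit-passage in moving frames.

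Your preferred approximation argument for the \emph{lower} bound has a real gap. Knowing $u_n\to u$ locally uniformly and $c^*(b_n)\to c^*(\bar b)$ does \emph{not} let you ``inherit'' the estimate $u_n(x,t)\to 1$ uniformly on $\{0<x<ct\}$ to $u$. For each fixed $n$ there is a time $T_n(\epsilon)$ after which $u_n>1-\epsilon$ on $\{0<x<ct\}$, but nothing in (a) or (b) controls $T_n$ uniformly in $n$; and even if $T_n$ were bounded, local-uniform convergence only gives $|u_n-u|<\epsilon$ on compact subsets of $\mathbb R\times(0,\infty)$, whereas the spreading region $\{t>T,\ 0<x<ct\}$ is unbounded. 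A ``standard diagonal/limit argument'' does not close this: you would need either a quantitative, $n$-uniform lower spreading estimate for the smooth problems, or convergence of $u_n$ to $u$ in a moving frame --- neither of which is supplied. Your first alternative (travelling-wave sub-solution plus a hair-trigger step) can be made to work, but as you already note it requires strong-positivity/Harnack-type input for the measure-valued equation; that is precisely the kind of soft information the paper extracts once and for all by verifying the Weinberger hypotheses.
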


\fbox{Change ends}

To prove Theorem \ref{cestarhlargerthancestarb}, the following
proposition is important. We will prove it in Subsection
\ref{uniformboundofcestarbbar}.

\begin{pro}\label{Convergenceofcstarbn}
Let $\{b_n\}$ be a sequence in $ \Lambda(\alpha)$ converging to
some $\bar{b}\in\overline{\Lambda}(\alpha)$ in the weak\, $*$
sense. Then
\[
\ c_e^*(\bar{b})=\lim_{n\to\infty}c_e^*(b_n).
\]
\end{pro}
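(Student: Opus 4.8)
The plan is to work entirely with the eigenvalue characterization, so that the statement becomes a continuity property of the map $(\lambda,\bar b)\mapsto\mu(\lambda,\bar b)$ together with a routine analysis of the function $c\mapsto\inf_\lambda\{\,\lambda+(\mu(\lambda,\bar b)-\lambda^2)/(-\lambda)\,\}$ hidden behind Definition \ref{Defofcestar}. First I would rewrite
\[
 c_e^*(\bar b)=\inf_{\lambda>0}\frac{\lambda^2-\mu(\lambda,\bar b)}{\lambda}
 =\inf_{\lambda>0}\Bigl(\lambda-\frac{\mu(\lambda,\bar b)}{\lambda}\Bigr),
\]
which is the standard reformulation of the condition $\mu(\lambda,\bar b)=\lambda^2-\lambda c$: for fixed $\lambda$ that equation has a (unique) solution $c=\lambda-\mu(\lambda,\bar b)/\lambda$, and taking the infimum over admissible $\lambda$ gives $c_e^*$. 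Then the proposition reduces to showing that $\inf_{\lambda>0}g_n(\lambda)\to\inf_{\lambda>0}g_\infty(\lambda)$, where $g_n(\lambda):=\lambda-\mu(\lambda,b_n)/\lambda$ and $g_\infty(\lambda):=\lambda-\mu(\lambda,\bar b)/\lambda$.

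The key step is pointwise convergence $\mu(\lambda,b_n)\to\mu(\lambda,\bar b)$ for each fixed $\lambda>0$. I would prove this via the min–max (Rayleigh-type) or, more robustly, the operator-theoretic characterization alluded to in the excerpt just before Proposition \ref{principaleigenvalueisunique}: convert $-L_{\lambda,\bar b}\psi=\mu\psi$ into a fixed-point equation $\psi=K_{\lambda,\bar b}\psi$ for a compact positive operator (solving $-\psi''+2\lambda\psi'+M\psi=(\bar b+M)\psi$ on $\mathbb R/L\mathbb Z$ with $M$ large, so $K_{\lambda,\bar b}=(-\partial_{xx}+2\lambda\partial_x+M)^{-1}\circ(\text{multiplication by }\bar b+M)$), whose spectral radius is $(\mu(\lambda,\bar b)+M)^{-1}$ by Krein–Rutman. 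Since $b_n\to\bar b$ weak-$*$ in $C^*(\mathbb R/L\mathbb Z)$ (established after Lemma 2.2) and $(-\partial_{xx}+2\lambda\partial_x+M)^{-1}:C(\mathbb R/L\mathbb Z)\to C^1(\mathbb R/L\mathbb Z)$ is bounded, the operators $K_{\lambda,b_n}$ converge to $K_{\lambda,\bar b}$ in operator norm (test the difference against a fixed $\varphi$: $K_{\lambda,b_n}\varphi-K_{\lambda,\bar b}\varphi$ is the resolvent applied to $(b_n-\bar b)\varphi$, and $(b_n-\bar b)\varphi\rightharpoonup 0$; combined with compactness of the resolvent into $C$, one upgrades weak to norm convergence uniformly over $\|\varphi\|\le 1$). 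Norm convergence of compact operators forces convergence of the leading eigenvalue, giving $\mu(\lambda,b_n)\to\mu(\lambda,\bar b)$. Uniform boundedness of $\{\mu(\lambda,b_n)\}$ on compact $\lambda$-intervals comes for free from Section 4's bounds (the section where $c_e^*(\bar b)$ is shown bounded over a set of measures), and monotonicity/convexity of $\lambda\mapsto\mu(\lambda,\cdot)$ — it is a decreasing family in $\bar b$ and convex in $\lambda$ — gives equi-Lipschitz behavior, hence \emph{uniform} convergence on compact $\lambda$-intervals by a Dini/convexity argument.

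The last step is to pass from convergence of $g_n$ to convergence of $\inf_\lambda g_n$. The easy direction is $\limsup_n c_e^*(b_n)\le c_e^*(\bar b)$: pick a near-optimal $\lambda_0$ for $g_\infty$ and use $g_n(\lambda_0)\to g_\infty(\lambda_0)$. For the reverse inequality I would use that each $g_n$ attains its infimum at some $\lambda_n$ lying in a fixed compact interval $[\delta,\Lambda]\subset(0,\infty)$ — this is where the main obstacle lies, since a priori the minimizing $\lambda$ could escape to $0$ or $\infty$. To rule this out I would invoke Proposition \ref{boundedness} (the uniform upper and lower bounds $2\sqrt\alpha\le c^*(b_n)\le 2\sqrt{\alpha+\alpha^2L^2}$, which by $c^*=c_e^*$ for smooth $b$ pin $c_e^*(b_n)$ in a fixed compact set) together with the behavior $g_n(\lambda)\to\infty$ as $\lambda\to 0^+$ or $\lambda\to\infty$, uniformly in $n$: near $\lambda=0$ one uses $\mu(\lambda,b_n)\to\mu(0,b_n)=0$ (the principal eigenvalue of $-\partial_{xx}-b_n$ with $b_n\ge 0$, which is $\le 0$; in fact one needs the quantitative estimate $\mu(\lambda,b_n)\le -\alpha + O(\lambda)$ coming from testing with $\psi\equiv 1$), so $g_n(\lambda)\ge \alpha/\lambda - C\to\infty$; near $\lambda=\infty$ one uses $\mu(\lambda,b_n)\le \lambda^2$-type bounds so that $g_n(\lambda)\ge \lambda - O(\lambda)\to\infty$ with the right constants. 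Having confined $\lambda_n\in[\delta,\Lambda]$, pass to a subsequence $\lambda_n\to\lambda_*$; then $c_e^*(b_n)=g_n(\lambda_n)\to g_\infty(\lambda_*)\ge c_e^*(\bar b)$ by uniform convergence on $[\delta,\Lambda]$, completing the proof. The genuinely delicate point to execute carefully is the uniform-in-$n$ coercivity of $g_n$ at the ends of the $\lambda$-axis, since that is what keeps the minimizers in a compact set; everything else is soft spectral perturbation theory.
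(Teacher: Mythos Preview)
Your overall architecture matches the paper's: rewrite $c_e^*(\bar b)=\inf_{\lambda>0}\bigl(\lambda-\mu(\lambda,\bar b)/\lambda\bigr)$, prove eigenvalue convergence, confine the minimizing $\lambda$'s to a compact interval, and pass to the limit. But two of your three building blocks are either broken or overbuilt.

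\textbf{The eigenvalue convergence step has a real gap.} Your claim that $K_{\lambda,b_n}\to K_{\lambda,\bar b}$ in \emph{operator norm} on $C(\mathbb R/L\mathbb Z)$ is false in general. For a fixed $\varphi$, yes, $(b_n-\bar b)\varphi\rightharpoonup 0$ and the resolvent (having a continuous Green kernel) sends this to $0$ in $C$. But the supremum over $\|\varphi\|_C\le 1$ need not tend to $0$: take $b_n\rightharpoonup \alpha L\,\delta_0$ and choose $\varphi_n$ with $\varphi_n(0)=0$ but $\varphi_n\approx 1$ on the support of $b_n$; then $(b_n-\bar b)\varphi_n=b_n\varphi_n$ carries mass bounded away from $0$, and its image under the resolvent does not vanish. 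Compactness of the resolvent does not upgrade strong convergence to norm convergence. You could rescue the argument via collectively compact operator theory, but the paper takes a more direct route (Proposition~\ref{convergenceofeigenvalue}): normalize the eigenfunctions $\psi_n$ by $\|\psi_n\|_{L^\infty}=1$, use Lemma~\ref{LinearPsi} to bound $\max\psi_n/\min\psi_n$ uniformly, multiply the eigenvalue equation by $\psi_n$ and integrate to get a uniform $H^1_{per}$ bound, and then pass to a weak $H^1_{per}$ / strong $C$ limit in the weak formulation. Uniqueness of $\mu(\lambda,\bar b)$ (Proposition~\ref{uniqueeigenvalue}) then identifies the limit.

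\textbf{Coercivity is much easier than you suggest.} You flag the uniform-in-$n$ blow-up of $g_n$ at the ends of the $\lambda$-axis as the ``genuinely delicate point'', and propose ad hoc estimates (including the incorrect $\mu(0,b_n)=0$). In fact Lemma~\ref{Boundednessofmu} gives the two-sided bound $-\alpha-\alpha^2L^2\le\mu(\lambda,b)\le -\alpha$ valid for \emph{all} $\lambda\ge 0$ and all $b\in\Lambda(\alpha)$. This immediately yields $\lambda+\alpha/\lambda\le g_n(\lambda)\le \lambda+(\alpha+\alpha^2L^2)/\lambda$, so the minimizers $\lambda_n$ lie in a fixed compact interval automatically and the whole coercivity discussion collapses to one line. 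With that bound in hand, the paper's proof is literally: $\mu(\lambda,b_n)\to\mu(\lambda,\bar b)$ locally uniformly (the Corollary after Proposition~\ref{convergenceofeigenvalue}) and $\mu(\lambda,b_n)$ is uniformly bounded, hence the infima converge.
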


%Since for every $\bar{b}\in\overline{\Lambda}(\alpha)$, we may
%find a sequence $\{b_n\}\in \Lambda(\alpha)$ such that $b_n$
%converges to $\bar{b}$ in the weak $*$ sense. We have a
%corollary:\vskip8pt
%\begin{corol}\label{cestarbbarbounded}
%For any $\bar{b}\in\overline{\Lambda}(\alpha),$
%$2\sqrt{\alpha}\leq c_e^*(\bar{b})\leq2\sqrt{\alpha+\alpha^2L^2}.$
%\end{corol}

%As we stated above, the function $h(x)$ is an element of
%$\overline{\Lambda}(\alpha)$. In this paper, we will also show
%that function $h(x)$ can get the supremum of
%$\{c^*(b),b\in\Lambda(\alpha)\}.$ \vskip8pt

\section{Reaction-diffusion equation with a Borel-measure coefficient}

In this section, we establish the well-posedness of the Cauchy
problem \eqref{equationinsection3}.

\begin{them}\label{wlposedness}
For any given nonnegative initial data $u_0\in C(\mathbb R)\cap
L^\infty(\mathbb R),$ the problem \eqref{equationinsection3} has a
unique mild solution. This mild solution is also a weak solution
and it depends continuously on the initial data in the $L^\infty$
norm.
\end{them}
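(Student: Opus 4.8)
The plan is to prove Theorem~\ref{wlposedness} via the standard Picard iteration / fixed-point argument applied to the integral (mild) formulation, adapted to cope with the fact that $\bar b$ is only a measure. First I would set up the functional-analytic framework: fix $T>0$ and $R>0$, and work in the space $X_{T}=\{u\in C(\mathbb R\times[0,T])\,:\,0\le u\le R,\ \|u\|_{\infty}\le R\}$ equipped with a weighted sup-norm $\|u\|_{*}=\sup_{0\le t\le T}e^{-Kt}\|u(\cdot,t)\|_{L^{\infty}(\mathbb R)}$ for a constant $K$ to be chosen large. Define the map
\[
(\Phi u)(x,t)=\int_{\mathbb R}G(x-y,t)u_{0}(y)\,dy+\int_{0}^{t}\!\int_{\mathbb R}G(x-y,t-s)\,\bar b(y)\,u(y,s)\bigl(1-u(y,s)\bigr)\,dy\,ds,
\]
where the inner $y$-integral against $\bar b$ is interpreted as in \eqref{baslinearfunctional}. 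The key analytic point, which replaces the usual boundedness of the coefficient, is the estimate
\[
\int_{\mathbb R}G(x-y,t-s)\,\bar b(y)\,\varphi(y)\,dy\le \|\varphi\|_{\infty}\int_{\mathbb R}\bar b(y)\,G(x-y,t-s)\,dy\le C\,\|\varphi\|_{\infty}\,\frac{\alpha L}{\sqrt{t-s}}\Bigl(1+\sqrt{t-s}\Bigr),
\]
obtained by applying Lemma~\ref{lem:A} with $\eta(y)=G(x-y,\tau)$, whose periodized sup $\sum_{k}\max_{[0,L]}G(x-y+kL,\tau)$ is finite and bounded by $C\tau^{-1/2}(1+\sqrt\tau\,)$ uniformly in $x$ (a Gaussian tail sum). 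Since $1/\sqrt{t-s}$ is integrable in $s$, this yields both that $\Phi u$ is well defined and finite, and the contraction estimate $\|\Phi u-\Phi v\|_{*}\le \tfrac{C'}{\sqrt K}\,\|u-v\|_{*}$ after using $|u(1-u)-v(1-v)|\le(1+2R)|u-v|$; choosing $K$ large makes $\Phi$ a contraction, giving existence and uniqueness of a local-in-$X_{T}$ fixed point, hence a mild solution.

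Next I would verify the qualitative properties. Continuity of $\Phi u$ in $(x,t)$ up to $t=0$ and the initial condition $\lim_{t\searrow0}u(x,t)=u_{0}(x)$ follow from the standard heat-semigroup continuity for the linear term together with the $O(\sqrt t\,)$-type bound for the nonlinear term; here I would invoke the equicontinuity lemmas promised for Section~6 (referenced around \eqref{equationinsection3}) to control the $x$-continuity uniformly. For nonnegativity and the global bound $0\le u\le 1$ (so that the local solution extends to all $t>0$ and $R=1$ suffices), I would use a comparison/monotonicity argument: the iteration $u^{(n+1)}=\Phi u^{(n)}$ started from $u^{(0)}=0$ is monotone nondecreasing and bounded above by the spatially homogeneous supersolution solving $v'=v(1-v)$ pointwise — more precisely, one checks $\Phi$ is order preserving on $\{0\le u\le 1\}$ and that $v\equiv\max(1,\|u_0\|_\infty)$ is a supersolution, using $G\ge0$ and $\bar b\ge0$; then the limit is the minimal mild solution and equals the unique one. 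Continuous dependence on $u_{0}$ in $L^{\infty}$ is immediate from the same contraction estimate applied to the difference of two solutions with different initial data, giving $\|u-\tilde u\|_{*}\le C\|u_{0}-\tilde u_{0}\|_{\infty}$.

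Finally I would show every mild solution is a weak solution. This is a routine but slightly delicate computation: multiply the Duhamel formula by a test function $\eta\in C_{0}^{\infty}(\mathbb R\times I)$, use Fubini (justified by the absolute integrability just established, together with \eqref{sum-eta} to handle the $\bar b$-integral), and the identities $-\partial_{t}\!\int G(x-y,t-s)\eta(x,t)\,dx = \int G_{xx}(x-y,t-s)\eta\,dx$ plus $\int_{0}^{\,}G(x-y,0)\eta=\eta(y,\cdot)$, to convert the integral identity into the weak formulation; the measure term passes through because for fixed $s$ the function $y\mapsto\int_{0}^{\infty}\!\!\int G(x-y,t-s)(\cdots)\,dx\,dt$ is continuous and periodic-summable, so pairing with $\bar b$ is legitimate via \eqref{baslinearfunctional}. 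The main obstacle throughout is the singular-measure coefficient: everywhere one would ordinarily write $\|\bar b\|_{\infty}$ one must instead route the estimate through the heat kernel and Lemma~\ref{lem:A}, and one must be careful that the resulting $\tfrac{1}{\sqrt{t-s}}$ singularity is only integrable (not bounded), so the fixed-point space must use the exponential weight $e^{-Kt}$ rather than a naive small-$T$ contraction, and all Fubini interchanges must be justified by the explicit $L^{1}$ bounds rather than by boundedness of the data.
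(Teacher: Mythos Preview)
Your route is genuinely different from the paper's. The paper does not run a fixed-point argument on the Duhamel formula. Instead it approximates $\bar b$ by smooth $b_n\in\Lambda(\alpha)$, takes the classical solutions $u_n$ of \eqref{equationforun} (for which $0\le u_n\le\max\{1,\|u_0\|_\infty\}$ holds by the ordinary comparison principle), uses the Section~6 equicontinuity lemmas plus Arzel\`a--Ascoli to extract a locally uniform limit $u$, and checks that $u$ is a weak solution; Lemma~\ref{weaksolismildsol} then passes to the limit in the Duhamel formula for $u_n$ to show that this same $u$ is a mild solution. Uniqueness and $L^\infty$ continuous dependence are obtained exactly as you propose, via the $1/\sqrt{t-s}$ bound from Lemma~\ref{lem:A} and a singular Gronwall inequality. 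So the two proofs share the uniqueness half but build existence in opposite directions: you construct a mild solution directly and then verify it is weak, whereas the paper constructs a weak solution by compactness and then verifies it is mild. Your approach is more self-contained; the paper's buys the a~priori bounds and the comparison principle for free at the smooth level, which it reuses later (Propositions~\ref{compact} and the comparison principle, Remark~\ref{RemarkForMildSolution}).

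There is, however, a real gap in your plan. You assert that $\Phi$ is order-preserving on $\{0\le u\le 1\}$ and use this to run a monotone iteration for the bound $0\le u\le\max\{1,\|u_0\|_\infty\}$. That assertion is false: the map $r\mapsto r(1-r)$ is not monotone on $[0,1]$ (e.g.\ $1/2<1$ but $1/4>0$), so $u\le v$ does not imply $\Phi u\le\Phi v$, and the iterates from $u^{(0)}=0$ need not be increasing. Without the a~priori bound you are left with only local-in-time existence, since the nonlinearity is quadratic and your weighted-norm contraction still needs $\Phi$ to preserve a ball of fixed radius. The standard cure---adding a large linear term $Ku$ to make the reaction monotone---fails here because $\bar b$ is a measure and cannot be dominated pointwise by a constant. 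The paper avoids the whole issue by inheriting the bounds from the classical solutions $u_n$; if you want to keep a purely fixed-point proof you must supply an independent maximum-principle argument for mild solutions with measure coefficients, which is not routine.
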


\begin{proof} First we show that the solution $u(x,t)$ exists in the weak sense. Let $b_n\in\Lambda(\alpha)$ satisfy
$b_n\to \bar{b}$ in the weak\,$*$ sense. Then for any given
initial data $u_0(x),$ the problem
\begin{equation}\label{equationforun}
\left\{\begin{array}{ll}
(u_n)_t=(u_n)_{xx}+b_n(x)u_n(1-u_n)\\
 u_n(x,0)=u_0(x)
\end{array}\right.
\end{equation}
has a classical solution $u_n(x,t)$ for any $n\in \mathbb N.$ By
the comparison principle, $0\leq
u_n(x,t)\leq\max\{\|u_0\|_{L^\infty(\mathbb R)},1\}.$ Hence $u_n
\,(n\in\mathbb N)$ are uniformly bounded. Let $0<t_1<t_2$ be two
positive numbers. By Lemma \ref{Equicontinuity} which we will
prove in Section 6, the family of solutions
$\{u_n(x,t),x\in\mathbb R,t\in[t_1,t_2]\}$ are uniformly
equicontinuous with respect to $x$ and $t$. Here the modulus of
equicontinuity may depend on $t_1$ and $t_2$. Applying
Arzela-Ascoli theorem, we can get a subsequence, which we still
denote by $\{u_n(x,t)\}$ that converges uniformly in
$(x,t)\in[-M,M]\times[t_1,t_2]$ for every $M>0$ and $0<t_1<t_2.$
The limit function $u(x,t)=\lim_{n\to\infty}u_n(x,t)$ is defined
for every $(x,t)\in \mathbb R\times(0,+\infty)$ and satisfies
\begin{equation}\label{equationwithnonsmoothb}
u_t=u_{xx}+\bar{b}(x)u(1-u)
\end{equation}
in the weak sense. Next we show that $u$ satisfies
\begin{equation}\label{initial condition}
\lim_{t\searrow 0}u(x,t)=u_0(x)\quad\hbox{ for any } x \in\mathbb
R.
\end{equation}
To see this we first note that, by Lemma \ref{weaksolismildsol}
below, $u$ can be written in the form
\[
\begin{split}
 u(x,t)=&\int_{\mathbb
R}G(x-y,t)u_0(y)dy  \\ &+\int_0^t\int_{\mathbb
R}G(x-y,t-s)\bar{b}(y)u(y,s)(1-u(y,s))\,dyds.
\end{split}
\]
For any $x\in\mathbb R,$ the first integral
\[
\ \int_{\mathbb R}G(x-y,t)u_0(y)dy \to u_0(x),\hbox{ as }t\to 0.
\]
By Lemma \ref{lem:A} the second integral can be estimated as
follows
\[
\begin{split}
 &\Big|\int_0^t\int_{\mathbb R}G(x-y,t-s)\bar{b}(y)u(y,s)(1-u(y,s))\,dyds\Big|\\
 \leq&\int_0^t\frac{D}{\sqrt{t-s}}ds,
\end{split}
\]where $D$ is a constant depending on $\|u(\cdot,t)\|_{L^\infty(\mathbb
R)}.$ Since $\|u(\cdot,t)\|_{L^\infty(\mathbb R)}$ is bounded, we
have
\[
\ \int_0^t\int_{\mathbb
R}G(x-y,t-s)\bar{b}(y)u(y,s)(1-u(y,s))\,dyds\to 0,\hbox{ as }t\to
0.
\] Consequently \eqref{initial condition} holds.

Next, let $u_0,\tilde{u}_0\in C(\mathbb R)\cap L^\infty(\mathbb
R)$ be arbitrary and let $u,\tilde{u}$ be the corresponding weak
solutions of \eqref{equationinsection3}, the existence of which
has been proven above. Then $w:=u-\tilde{u}$ satisfies
\begin{equation}\label{w-eq}
\left\{\begin{array}{ll} w_t=w_{xx}+m(x,t)w
\quad\ \ &(x\in\mathbb R,\;t>0),\vspace{5pt}\\
w(x,0)=w_0(x)\quad\ \ &(x\in\mathbb R),
\end{array}\right.
\end{equation}
where $m:=\bar{b}(x)(1-u-\tilde{u})$ is a measure-valued function
of $t$. By Lemma \ref{weaksolismildsol} below, we can express $w$
as
\begin{equation}\label{integral}
\begin{split}
&w(x,t)=\int_{\mathbb R} G(x-y,t)w_0(y)dy\\
&\hspace{65pt}+\int_0^t\int_{\mathbb
R}G(x-y,t-s)m(y,s)w(y,s)\,dy\,ds.
\end{split}
\end{equation}
Define
\[
\rho(t)=\Vert w(\cdot,t)\Vert_{L^\infty(\mathbb R)}.
\]
Then, since
\[
\int_{\mathbb R}G(x-y,t-s)m(y,s)w(y,s)\,dy\leq \Vert
m(\cdot,s)\Vert \max_{y\in\mathbb R}\,G(x-y,t-s)|w(y,s)|,
\]
we have
\begin{equation}\label{ineq}
\rho(t)\leq \rho(0)+M\int_0^t \frac{\rho(s)\,ds}{\sqrt{4\pi
(t-s)}},
\end{equation}
where $M$ is a constant such that $\Vert m(\cdot,t)\Vert\leq M$
for $t\geq 0$. By Lemma 7.7 of Alfaro, Hilhorst, Matano \cite{s3},
it follows that
\begin{equation}\label{est-rho}
\rho(t)\leq e^{M^2t/4}\Big(1+\frac{M}{\sqrt{4\pi}} \int_0^t
\frac{e^{-M^2s/4}}{\sqrt{s}}\,ds\Big)\rho(0)
=O\Big(e^{M^2t/4}\big(1+\sqrt{t}\,\big)\rho(0)\Big).
\end{equation}
Consequently
\[
\ \|u(\cdot,t)-\tilde{u}(\cdot,t)\|_{L^\infty(\mathbb R)}\leq
e^{M^2t/4}\Big(1+\frac{M}{\sqrt{4\pi}} \int_0^t
\frac{e^{-M^2s/4}}{\sqrt{s}}\,ds\Big)\|u_0-\tilde{u}_0\|_{L^\infty(\mathbb
R)}.
\]
This proves the continuous dependence on the initial data and the
uniqueness of the mild solution.

%Next we let $u$ and $\tilde{u}$ be two solutions of equation
%\eqref{equationinsection3} with initial data $u_0(x)$. Let
%$w=u-\tilde{u}.$ Then $m(x,t)$ would be
%$\bar{b}(x)(1-u-\tilde{u}).$ The associated $\rho(0)=0$, so we get
%$u\equiv\tilde{u}$ as long as they exist. {}From the estimate, it
%is not difficult to get the continuous dependency on the initial
%data.\vskip8pt
\end{proof}

\begin{lem}\label{weaksolismildsol} The function $u(x,t)$ in the proof of Theorem \ref{wlposedness} is also a mild solution. In other words,
$u(x,t)$ can be written as
\begin{equation}\label{Mildsolution}
\begin{split}
 u(x,t)=&\int_{\mathbb
R}G(x-y,t)u_0(y)dy  \\ &+\int_0^t\int_{\mathbb
R}G(x-y,t-s)\bar{b}(y)u(y,s)\big(1-u(y,s)\big)\,dyds,
\end{split}
\end{equation}
where the second integral on the right-hand side is understood as
in \eqref{baslinearfunctional}.
\end{lem}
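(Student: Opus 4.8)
The plan is to pass to the limit $n\to\infty$ in Duhamel's formula for the classical solutions $u_n$ of \eqref{equationforun} constructed in the proof of Theorem \ref{wlposedness}. Since $b_n\in C^1(\mathbb R)$ and $u_n$ is a bounded classical solution, the source term $b_n(x)u_n(x,s)(1-u_n(x,s))$ is continuous and bounded, so the standard variation-of-constants representation gives
\[
u_n(x,t)=\int_{\mathbb R}G(x-y,t)u_0(y)\,dy+\int_0^t\int_{\mathbb R}G(x-y,t-s)\,b_n(y)\,u_n(y,s)\big(1-u_n(y,s)\big)\,dy\,ds .
\]
The first term does not depend on $n$. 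For the second, I would fix $(x,t)$ with $t>0$, write $f_n(y,s):=u_n(y,s)(1-u_n(y,s))$ and $f(y,s):=u(y,s)(1-u(y,s))$, and use that $0\le u_n\le M_0:=\max\{\|u_0\|_{L^\infty(\mathbb R)},1\}$ (hence $|f_n|\le C_0$ for some constant $C_0$) together with the fact that $u_n\to u$, and therefore $f_n\to f$, uniformly on compact subsets of $\mathbb R\times(0,\infty)$.

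The heart of the argument is to prove that, for each fixed $s\in(0,t)$,
\[
g_n(s):=\int_{\mathbb R}b_n(y)G(x-y,t-s)f_n(y,s)\,dy\ \longrightarrow\ g(s):=\int_{\mathbb R}\bar b(y)G(x-y,t-s)f(y,s)\,dy .
\]
I would decompose $g_n(s)-g(s)$ as $\int_{\mathbb R}b_n(y)G(x-y,t-s)\big(f_n(y,s)-f(y,s)\big)\,dy+\int_{\mathbb R}\big(b_n(y)-\bar b(y)\big)G(x-y,t-s)f(y,s)\,dy$. The second term tends to $0$ by Lemma \ref{lem:A} applied to the \emph{fixed} test function $\eta(y)=G(x-y,t-s)f(y,s)$, which has Gaussian decay and so satisfies the summability condition of that lemma. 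For the first term I would split $\mathbb R=\{|y-x|\le R\}\cup\{|y-x|>R\}$: on the bounded part it is at most $\big(\sup_{|y-x|\le R}|f_n-f|\big)\int_{\mathbb R}b_n(y)G(x-y,t-s)\,dy$, where, as in the computation proving Lemma \ref{lem:A}, $\int_{\mathbb R}b_n(y)G(x-y,\tau)\,dy\le\alpha L\sum_{k}\max_{0\le z\le L}G(x-z-kL,\tau)\le C_1\tau^{-1/2}+C_2$ for $\tau\in(0,t]$ (using the elementary bound $\tau^{-1/2}e^{-a^2/(8\tau)}\le C/a$ to sum the tail); on the far part it is at most $2C_0\int_{|y-x|>R}b_n(y)G(x-y,t-s)\,dy$, which is small uniformly in $n$ and in $s\in(0,t)$ once $R$ is large, since $G(x-y,\tau)\mathbf{1}_{\{|y-x|>R\}}\le (C/R)\,e^{-(x-y)^2/(8t)}$ for $\tau\le t$.

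Finally I would carry the limit through the outer integral: the estimates just used give the $n$-independent bound $|g_n(s)|\le C_0\alpha L\sum_{k}\max_{0\le z\le L}G(x-z-kL,t-s)\le C_1(t-s)^{-1/2}+C_2$, which is integrable on $(0,t)$, so the dominated convergence theorem gives $\int_0^tg_n(s)\,ds\to\int_0^tg(s)\,ds$. Passing to the limit in the Duhamel formula for $u_n$ then yields \eqref{Mildsolution}, the right-hand side being finite and interpreted in the sense of \eqref{baslinearfunctional} by the same bounds (which show $\eta(y)=G(x-y,t-s)f(y,s)$ is $\bar b$-integrable with the stated control). I expect the main obstacle to be exactly the interchange of the weak-$*$ convergence $b_n\rightharpoonup\bar b$ with the $n$-dependence of the test function $G(x-\cdot,t-s)f_n(\cdot,s)$: this is what forces the two-term splitting above, and the uniform-in-$s$ (in particular as $s\uparrow t$) control of the Gaussian tails is the delicate point in making it work.
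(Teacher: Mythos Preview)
Your proposal is correct and follows essentially the same approach as the paper: you start from Duhamel's formula for the classical solutions $u_n$, split the difference of the nonlinear terms as $b_n(f_n-f)+(b_n-\bar b)f$, handle the second piece via Lemma~\ref{lem:A}, and then pass the limit through the time integral by dominated convergence with a $C(t-s)^{-1/2}$ bound. Your near/far splitting for the term $\int b_n G\,(f_n-f)$ is a bit more explicit than the paper's treatment of that term, but the underlying idea is the same.
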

\begin{proof}
Denote $u(y,s)(1-u(y,s))$ by $f(u(y,s)).$  Since $u_n$ is a
classical solution of equation \eqref{equationforun}, it is also a
mild solution. Hence
\[
\ u_n(x,t)=\int_{\mathbb R}G(x-y,t)u_0(y)+\int_0^t\int_{\mathbb
R}G(x-y,t-s)b_n(y)f(u_n(y,s))\,dyds.
\]
To prove this lemma, it is sufficient to prove
\[
\begin{split}
\ \int_0^t\int_{\mathbb R}&G(x-y,t-s)\bar{b}(y)f(u(y,s))\,dyds\\
&=\lim_{n\to\infty}\int_0^t\int_{\mathbb
R}G(x-y,t-s)b_n(y)f(u_n(y,s))\,dyds.
\end{split}
\]
We consider
\[
\ \int_0^t\int_{\mathbb
R}G(x-y,t-s)\big(b_n(y)f(u_n(y,s))-\bar{b}(y)f(u(y,s))\big)\,dyds,
\]
which we denote by $Z_n(x,t).$ First we have

\begin{equation}\label{Toprovemildsol}
\begin{split}
|Z_n(x,t)|&\leq\Big|\int_0^t\int_{\mathbb R}G(x-y,t-s)\big(b_n(y)-\bar{b}(y)\big)f(u(y,s))\,dyds\Big|\\
&+\Big|\int_0^t\int_{\mathbb
R}G(x-y,t-s)b_n(y)\big(f(u(y,s))-f(u_n(y,s))\big)\,dyds\Big|.
\end{split}
\end{equation}

Since $u(x,t)$ is bounded and $\{b_n-\bar{b}\}$ are uniformly
bounded linear functionals on any interval $[kL,(k+1)L]$ where
$k\in\mathbb Z,$ by Lemma \ref{lem:A} we have
\[
\begin{split}
&\Big|\int_{\mathbb
R}G(x-y,t-s)\big(b_n(y)-\bar{b}(y)\big)f(u(y,s))\,dy\Big|\\
&\hspace{30pt}\leq \sum_{k=-\infty}^{\infty} 2\alpha L \max_{y\in[0,L]}G(x-y-kL,t-s)\|f(u)\|_{L^\infty(\mathbb R\times[0,t])}\\
&\hspace{30pt}\leq 2\alpha L\|f(u)\|_{L^\infty(\mathbb
R\times[0,t])}\frac{C_1}{\sqrt{t-s}}
\end{split}
\] for some constant $C_1>0.$  Since
\[
\ \int_0^t\frac{C_1}{\sqrt{t-s}}ds<+\infty,
\] by Lebesgue convergence theorem,
\[
\ \lim_{n\to\infty}\int_0^t\int_{\mathbb
R}G(x-y,t-s)\big(b_n(y)-\bar{b}(y)\big)(f(u(y,s)))\,dyds=0.
\]
Similarly,
\[
\ \int_{\mathbb
R}G(x-y,t-s)b_n(y)\big(f(u(y,s))-f(u_n(y,s))\big)dy\leq\frac{C_2}{\sqrt{t-s}}
\] for some constant $C_2>0.$ Again, by Lebesgue convergence
theorem,
\[
\ \lim_{n\to\infty}\int_0^t\int_{\mathbb
R}G(x-y,t-s)b_n(y)\big(f(u(y,s))-f(u_n(y,s))\big)\,dyds=0.
\]
Hence
\[
\begin{split}
 u(x,t)=&\int_{\mathbb
R}G(x-y,t)u_0(y)dy  \\ &+\int_0^t\int_{\mathbb
R}G(x-y,t-s)\bar{b}(y)u(y,s)(1-u(y,s))\,dyds.
\end{split}
\]
%Moreover, we have $u(x,t)$ is bounded. So we obtain
%\[
%\ \lim_{t\searrow 0}u(x,t)=u_0(x).
%\]
\end{proof}

{}From Lemma \ref{EquicontinuousForX} in Section 6 and Theorem
\ref{wlposedness}, the following proposition can be obtained
easily.
\begin{pro}\label{compact}
Let $u(x,t,u_0)$ be the mild solution of
\eqref{equationinsection3} with initial data $u_0$. Then for any
$t_0>0$ and $M>0$, the family of functions
$\{u(x,t_0,u_0)\}_{\|u_0\|_{L^\infty(\mathbb R)}\leq M}$ is
uniformly equicontinuous in $x$.
\end{pro}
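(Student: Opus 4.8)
The plan is to reduce everything to the equicontinuity results of Section 6, specifically Lemma \ref{EquicontinuousForX}, and to the uniform bounds already built into the construction of the mild solution in Theorem \ref{wlposedness}. First I would recall the a priori bound: by the comparison principle established in the proof of Theorem \ref{wlposedness}, every mild solution $u(x,t,u_0)$ with $\|u_0\|_{L^\infty(\mathbb R)}\leq M$ satisfies $0\leq u(x,t,u_0)\leq \max\{M,1\}=:M'$ for all $x\in\mathbb R$, $t\geq 0$. Thus the family $\{u(\cdot,t,u_0)\}_{\|u_0\|_{L^\infty}\leq M}$ is uniformly bounded in $L^\infty(\mathbb R)$, uniformly in $t$, by a constant depending only on $M$.

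Next, the key point is that the equicontinuity-in-$x$ estimate of Lemma \ref{EquicontinuousForX} is, by its very formulation in Section 6, an estimate whose modulus depends only on an $L^\infty$ bound of the solution and on the time level $t_0>0$ (and on $\alpha,L$, which are fixed), not on the particular initial datum. Concretely, for the mild solution one writes
\[
u(x,t_0,u_0)=\int_{\mathbb R}G(x-y,t_0)u_0(y)\,dy+\int_0^{t_0}\!\!\int_{\mathbb R}G(x-y,t_0-s)\bar b(y)u(1-u)\,dy\,ds,
\]
and both terms are handled uniformly: the heat-semigroup term $\int G(x-y,t_0)u_0(y)\,dy$ is smoothing, and since $\|u_0\|_{L^\infty}\leq M$ its spatial modulus of continuity at time $t_0$ is controlled purely by $M$ and $t_0$; the nonlinear term is estimated, using Lemma \ref{lem:A} exactly as in the proof of Lemma \ref{weaksolismildsol}, by $\int_0^{t_0}\frac{D'}{\sqrt{t_0-s}}\,ds$ with $D'$ depending only on $M'$, and the modulus of continuity of this convolution in $x$ again depends only on $M'$, $\alpha$, $L$, $t_0$. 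This is precisely the content of Lemma \ref{EquicontinuousFor X}; I would simply invoke it with the uniform bound $M'$ in place of the $L^\infty$ norm of an individual solution.

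Combining these two observations, for each fixed $t_0>0$ and $M>0$ the family $\{u(\cdot,t_0,u_0)\}_{\|u_0\|_{L^\infty(\mathbb R)}\leq M}$ is uniformly bounded and has a common modulus of continuity in $x$, i.e. it is uniformly equicontinuous in $x$. I do not expect any serious obstacle here: the only thing to be careful about is to check that the statement of Lemma \ref{EquicontinuousForX} really does give a modulus depending only on the $L^\infty$ bound and on $t_0$ (and not, say, on the modulus of continuity of $u_0$ itself); granting that — which is how the lemma is described in the outline — the proposition follows in a line or two. If one wanted to avoid any dependence on the precise formulation of Lemma \ref{EquicontinuousForX}, an alternative is to redo the short estimate above directly: split $u$ into the linear heat part and the Duhamel part, bound the Duhamel integrand by $\frac{D'}{\sqrt{t_0-s}}$ via Lemma \ref{lem:A}, and estimate $|u(x_1,t_0,u_0)-u(x_2,t_0,u_0)|$ by $\int_{\mathbb R}|G(x_1-y,t_0)-G(x_2-y,t_0)|\,|u_0(y)|\,dy$ plus the analogous difference for the Duhamel term, each of which tends to $0$ as $|x_1-x_2|\to0$ at a rate depending only on $M'$ and $t_0$.
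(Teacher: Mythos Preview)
Your proposal is correct and matches the paper's approach: the paper itself simply says the proposition follows from Lemma~\ref{EquicontinuousForX} together with Theorem~\ref{wlposedness}, and you invoke exactly these two ingredients. The only point worth making explicit is that Lemma~\ref{EquicontinuousForX} is stated for \emph{smooth} $b\in\Lambda(\alpha)$, so to reach the mild solution for a measure $\bar b$ one either passes to the limit $b_n\to\bar b$ via the construction in Theorem~\ref{wlposedness} (the modulus of equicontinuity, being uniform over $b\in\Lambda(\alpha)$ and $\|u_0\|_{L^\infty}\leq M$, survives the locally uniform limit), or---as in your alternative route---one repeats the mild-solution estimate of Section~6 directly with $\bar b$, using Lemma~\ref{lem:A} to control the Duhamel term.
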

 Consider the construction and the
uniqueness of mild solution. The following comparison principle
holds.
\begin{pro}[Comparison principle] Let $u(x,t,u_0)$ be as in Proposition \ref{compact}. Then $u_0\leq v_0$ implies
\[
\ u(x,t,u_0)\leq u(x,t,v_0)\quad\hbox{ for }x\in\mathbb R,\,t\geq
0.
\]
\end{pro}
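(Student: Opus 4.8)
The plan is to deduce the comparison principle by approximation, passing the classical parabolic comparison principle through the limit $b_n\to\bar b$. Fix a sequence $\{b_n\}\subset\Lambda(\alpha)$ with $b_n\to\bar b$ in the weak\,$*$ sense, and for each $n$ let $u_n(x,t,u_0)$ and $u_n(x,t,v_0)$ be the classical solutions of \eqref{equationforun} with initial data $u_0$ and $v_0$ respectively. As in the proof of Theorem \ref{wlposedness} these are bounded, $0\le u_n(x,t,u_0)\le\max\{\|u_0\|_{L^\infty(\mathbb R)},1\}$ and likewise for $v_0$. Hence, assuming $u_0\le v_0$, the difference $w_n:=u_n(\cdot,\cdot,v_0)-u_n(\cdot,\cdot,u_0)$ is a bounded classical solution of the linear equation $\partial_t w_n=\partial_{xx}w_n+c_n(x,t)w_n$ with bounded coefficient $c_n:=b_n(x)\big(1-u_n(\cdot,\cdot,u_0)-u_n(\cdot,\cdot,v_0)\big)$ and with $w_n(\cdot,0)=v_0-u_0\ge 0$. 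The maximum principle for bounded solutions on $\mathbb R$ then gives $w_n\ge 0$, i.e. $u_n(x,t,u_0)\le u_n(x,t,v_0)$ for all $x\in\mathbb R$ and $t\ge 0$.

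Next I pass to the limit $n\to\infty$. By the construction in the proof of Theorem \ref{wlposedness}, a subsequence of $\{u_n(\cdot,\cdot,u_0)\}$ converges locally uniformly on $\mathbb R\times(0,\infty)$ to a mild solution of \eqref{equationinsection3} with initial data $u_0$; since the mild solution is unique, every subsequence has a further subsequence with this same limit, so the whole sequence converges pointwise, $u_n(x,t,u_0)\to u(x,t,u_0)$, and likewise $u_n(x,t,v_0)\to u(x,t,v_0)$. (At $t=0$ the inequality is trivial.) Letting $n\to\infty$ in $u_n(x,t,u_0)\le u_n(x,t,v_0)$ yields the desired conclusion.

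The only delicate point is the promotion of subsequential convergence of $\{u_n\}$ to convergence of the whole sequence, which is exactly where the uniqueness part of Theorem \ref{wlposedness} is used; everything else is the classical comparison principle and a routine passage to the limit. Alternatively one could argue directly from the integral equation \eqref{integral} for $w:=u(\cdot,\cdot,v_0)-u(\cdot,\cdot,u_0)$, using $f(v)-f(u)=w\,(1-u-v)$ to write the coefficient as $m(x,t)=\bar b(x)\big(1-u-v\big)$, the smoothing bound $\int_{\mathbb R}G(x-y,t-s)\,d\bar b(y)\le C(t-s)^{-1/2}$ from Lemma \ref{lem:A}, and a Picard iteration of the associated integral operator after splitting $m$ into its nonnegative and nonpositive parts; but the approximation argument above is shorter, so that is the one I would present.
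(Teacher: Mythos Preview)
Your argument is correct and is precisely the approach the paper has in mind: the paper does not give a detailed proof but simply remarks that the comparison principle follows from ``the construction and the uniqueness of mild solution,'' i.e., exactly your scheme of applying the classical comparison to the approximations $u_n$ and then using uniqueness to pass the inequality through the limit $n\to\infty$. Your handling of the subsequence-to-full-sequence step via uniqueness is the right way to justify this.
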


\fbox{Change begins}

\begin{Rema}\label{RemarkForMildSolution}
If $u(x,t)$ is a mild solution of \eqref{equationinsection3}, then
for any constant $\tau\geq 0,$ $u(x,x+\tau)$ is a mild solution of
\eqref{equationinsection3} with initial data $u(x,\tau).$ This is
obvious if $\bar{b}$ is a smooth function, since a mild solution
is a classical solution. In the general case where
$\bar{b}\in\overline{\Lambda}(\alpha).$ We can take a sequence of
smooth $b_n$ with $b_n\to\bar{b}$ (in the sense of ??) and use the
approximation argument found in Lemma \ref{weaksolismildsol}.
\end{Rema}

\fbox{Change ends}

\section{The linear eigenvalue problem}\label{Lineareigenvalueproblem}

\subsection{Basic estimates}
We recall that $\mu(\lambda,b)$ denotes the principal eigenvalue
of the problem
\[
-\psi''+2\lambda\psi'-b\psi=\mu(\lambda,b)\psi,\,\,\,\,\psi(x+L)\equiv\psi(x)
\](see Definition
\ref{definitionofprincipaleigenvalue}). In this subsection, we
estimate $\mu(\lambda,b)$ both from the above and below. Here we
introduce the following notation. Let
\[
\ \Hp :=\{ \varphi\in
H_{loc}^1(\mathbb{R})\,|\,\varphi(x)\equiv\varphi(x+L)\}
\] with the norm
\[
\
\|\varphi\|_{\Hp}=\big(\int_{[0,L)}(\varphi'^2+\varphi^2)dx\big)^{1/2}.
\]  Let
\[ E_L=\{\,\psi\in \Hp\,|\,\psi(x)>0\,\}. \]

 Note that the following embedding is compact:
\[
\ \Hp \hookrightarrow C(\mathbb{R})\cap L^{\infty}(\mathbb{R}).
\]

\begin{lem}\label{IntegralInequality}
For any $b \in \Lambda(\alpha)$ and any $\psi(x) \in E_L$, we have
\[ \ \int_{[0,L)} \psi'^2(x)dx-\int_{[0,L)} b(x)\psi^2(x)dx\geq
-(\alpha+\alpha^2L^2)\int_{[0,L)}\psi^2(x)dx.
\]
\end{lem}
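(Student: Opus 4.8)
The plan is to reduce the estimate to a pointwise bound on $\psi^2$. Since $b\geq 0$ and $\int_{[0,L)}b(x)\,dx=\alpha L$, we have
\[
\int_{[0,L)}b(x)\psi^2(x)\,dx\ \leq\ \Big(\max_{x\in\R}\psi^2(x)\Big)\int_{[0,L)}b(x)\,dx\ =\ \alpha L\,\max_{x\in\R}\psi^2(x),
\]
so it suffices to control $\max_{x\in\R}\psi^2$ in terms of the two quantities $A:=\int_{[0,L)}\psi^2(x)\,dx$ and $B:=\int_{[0,L)}\psi'^2(x)\,dx$ that appear in the statement.

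To get such a bound I would first use that $\psi\in\Hp$ is continuous and that $\psi^2\in W^{1,1}_{loc}(\R)$ with $(\psi^2)'=2\psi\psi'$, so that for any $x,y\in[0,L)$,
\[
\psi^2(x)=\psi^2(y)+\int_y^x 2\psi(s)\psi'(s)\,ds\ \leq\ \psi^2(y)+\int_0^L 2|\psi(s)\psi'(s)|\,ds\ \leq\ \psi^2(y)+2\sqrt{A}\,\sqrt{B},
\]
where the middle inequality uses that $[y,x]$ (or $[x,y]$) is contained in $[0,L)$ and the last is Cauchy--Schwarz. Averaging this inequality over $y\in[0,L)$ gives
\[
\psi^2(x)\ \leq\ \frac{A}{L}+2\sqrt{A}\,\sqrt{B}\qquad\hbox{for every }x\in\R,
\]
and hence $\int_{[0,L)}b\psi^2\leq \alpha L\big(\tfrac{A}{L}+2\sqrt{AB}\,\big)=\alpha A+2\alpha L\sqrt{AB}$.

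It then remains to check that $\alpha A+2\alpha L\sqrt{AB}\leq B+(\alpha+\alpha^2L^2)A$, i.e.\ that $2\alpha L\sqrt{AB}\leq B+\alpha^2L^2 A$; but this is exactly the AM--GM inequality $2\sqrt{B\cdot\alpha^2L^2A}\leq B+\alpha^2L^2A$, which proves the lemma. I do not expect a serious obstacle here: the only points needing a little care are the validity of the fundamental theorem of calculus for $\psi^2$ when $\psi$ is merely $H^1_{loc}$ and the bookkeeping with periodic integrals, while the one genuinely useful idea is to split the constant as $\alpha+\alpha^2L^2$ precisely so that the cross term $2\alpha L\sqrt{AB}$ is absorbed by $B+\alpha^2L^2A$.
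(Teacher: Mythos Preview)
Your proof is correct and follows essentially the same approach as the paper: both start from the fundamental theorem of calculus for $\psi^2$, bound $\int 2\psi\psi'$ by a Cauchy--Schwarz/Young-type inequality, and then average and integrate against $b$, with the constant $\alpha+\alpha^2L^2$ emerging from the same optimal splitting (the paper uses Young's inequality with parameter $k=\alpha L$, which is exactly your AM--GM step). The only cosmetic difference is that the paper multiplies by $b(x_2)$ and integrates in both variables simultaneously, whereas you first pass through a pointwise bound on $\max\psi^2$; the resulting estimates are identical.
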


\begin{proof}
First, for any $x_1, x_2 \in [0,L]$, we have \[ \
\psi^2(x_2)-\psi^2(x_1)= \int_{x_1}^{x_2}2\psi(x)\psi'(x)dx.
\]
Hence, for any positive number $k>0$, we have\[ \
\psi^2(x_2)-\psi^2(x_1)\leq
\frac{1}{k}\int_{[0,L)}\psi'^2(x)dx+k\int_{[0,L)}\psi^2(x)dx. \]
Multiplying the above inequality by $b(x_2)$ and integrating it by
$(x_1,x_2)\in[0,L]\times[0,L]$, we get
\[\begin{split}
&\ L\int_{[0,L)}b(x_2)\psi^2(x_2)dx_2-\alpha L
\int_{[0,L)}\psi^2(x_1)dx_1\\ &\hspace{50pt}\leq \alpha
L^2\Big(\frac{1}{k}\int_{[0,L)}\psi'^2(x)dx+k\int_{[0,L)}\psi^2(x)dx\Big).
\end{split}\] This is equivalent to
\[
\begin{split}
 &L\int_{[0,L)}b(x)\psi^2(x)dx-\alpha L \int_{[0,L)}\psi^2(x)dx\\
 &\hspace{50pt}\leq
\alpha
L^2\Big(\frac{1}{k}\int_{[0,L)}\psi'^2(x)dx+k\int_{[0,L)}\psi^2(x)dx\Big).
\end{split}
\]
Letting $k=\alpha L$, we obtain
\[
\ \int_{[0,L)}\psi'^2(x)dx-\int_{[0,L)}b(x)\psi^2(x)dx\geq
-(\alpha+\alpha^2L^2)\int_{[0,L)}\psi^2(x)dx.
\]
\end{proof}
\begin{lem}\label{LamdaEqual0}
For $\lambda>0$, $b \in \Lambda(\alpha)$, it holds that
$\mu(\lambda,b)\geq \mu(0,b).$
\end{lem}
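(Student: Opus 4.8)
The plan is to test the eigenvalue equation defining $\mu(\lambda,b)$ against its own (positive, $L$-periodic) eigenfunction and to observe that, after integrating over one period, the first-order drift term drops out entirely, leaving exactly the quadratic form that governs the self-adjoint eigenvalue $\mu(0,b)$. The starting point is the variational description of $\mu(0,b)$: since $b\in\Lambda(\alpha)\subset C^1(\mathbb R)$, the operator $-d^2/dx^2-b(x)$ with $L$-periodic boundary conditions is self-adjoint, bounded below, and has compact resolvent, so the bottom of its spectrum is an eigenvalue and equals $\inf\{\int_{[0,L)}(\varphi'^2-b\varphi^2)\,dx:\varphi\in\Hp,\ \int_{[0,L)}\varphi^2\,dx=1\}$; the minimizer may be chosen positive and $L$-periodic, so by the uniqueness part of Proposition~\ref{principaleigenvalueisunique} this bottom eigenvalue is exactly $\mu(0,b)$. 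In particular,
\[
\int_{[0,L)}\big(\varphi'^2-b\varphi^2\big)\,dx\ \ge\ \mu(0,b)\int_{[0,L)}\varphi^2\,dx\qquad\text{for every }\varphi\in\Hp.
\]

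Next I would take the principal eigenfunction $\psi\in E_L$ associated with $\mu(\lambda,b)$, which exists by Proposition~\ref{principaleigenvalueisunique}; as $b\in C^1$, $\psi$ is a classical solution of $-\psi''+2\lambda\psi'-b\psi=\mu(\lambda,b)\psi$ with $\psi(x)\equiv\psi(x+L)>0$, hence $\psi\in\Hp$. Multiplying this identity by $\psi$ and integrating over $[0,L)$, the $L$-periodicity of $\psi$ makes the boundary terms cancel in $\int_{[0,L)}(-\psi'')\psi\,dx=\int_{[0,L)}\psi'^2\,dx$ and makes the drift term vanish, $2\lambda\int_{[0,L)}\psi'\psi\,dx=\lambda\big(\psi(L)^2-\psi(0)^2\big)=0$. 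Thus $\int_{[0,L)}(\psi'^2-b\psi^2)\,dx=\mu(\lambda,b)\int_{[0,L)}\psi^2\,dx$, and applying the displayed inequality with $\varphi=\psi$ yields $\mu(\lambda,b)\int_{[0,L)}\psi^2\,dx\ge\mu(0,b)\int_{[0,L)}\psi^2\,dx$. Since $\psi>0$, the integral is strictly positive, and dividing gives $\mu(\lambda,b)\ge\mu(0,b)$.

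I do not expect a real obstacle here: the heart of the matter is a single integration by parts in which the drift contributes nothing by periodicity. The two points that merit a sentence of care are (i) that the $\lambda=0$ principal eigenfunction coincides with the variational minimizer — the classical Krein--Rutman/Perron--Frobenius positivity of the ground state of a self-adjoint periodic Schr\"odinger operator, together with the uniqueness in Proposition~\ref{principaleigenvalueisunique} — and (ii) that $\psi$ is regular enough to justify the integration by parts, which is automatic since $b\in C^1$ forces $\psi\in C^2$.
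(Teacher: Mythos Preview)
Your argument is correct and is essentially identical to the paper's own proof: multiply the eigenvalue equation for $\mu(\lambda,b)$ by its positive periodic eigenfunction, integrate over a period so that the drift term vanishes, and compare with the variational (Rayleigh-quotient) characterization of $\mu(0,b)$. The paper states the variational formula over $E_L$ rather than all of $\Hp$, but this is immaterial, and your added remarks on regularity and on identifying $\mu(0,b)$ with the bottom of the self-adjoint spectrum simply make explicit what the paper leaves implicit.
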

\begin{proof}
Let $\psi>0$ be the principal eigenfunctions of $-L_{\lambda,b}.$
Then
\[
 -\psi''+2\lambda \psi'-b(x)\psi=\mu(\lambda,b)\psi.
\]
Multiplying this by $\psi$ and integrating it from $0$ to $L$, we
have
\[
\int_{[0,L)}\psi'^2(x)dx-\int_{[0,L)}b(x)\psi^2(x)dx=\mu(\lambda,b)\int_{[0,L)}\psi^2(x)dx,
\] hence
\[
\
\mu(\lambda,b)=\frac{\int_{[0,L)}\psi'^2(x)dx-\int_{[0,L)}b(x)\psi^2(x)dx}{\int_{[0,L)}\psi^2(x)dx}.
\]
On the other hand, for $\mu(0,b)$, we have the variational formula
\begin{equation} \label{variational} \ \mu(0,b)=\min_{\phi \in
E_L}\frac{\int_{[0,L)}\phi'^2(x)dx-\int_{[0,L)}b(x)\phi^2(x)dx}{\int_{[0,L)}\phi^2(x)dx}.
\end{equation}
Consequently $\mu(\lambda,b)\geq \mu(0,b).$
\end{proof}
\begin{lem}\label{Boundednessofmu}
For any $b\in\Lambda(\alpha),$ it holds that
\begin{equation}\label{estimateformu}
-\alpha\geq \mu(\lambda,b)\geq -\alpha-\alpha^2L^2.
\end{equation}
\end{lem}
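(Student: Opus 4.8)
The plan is to prove the two inequalities in \eqref{estimateformu} separately: the upper bound $\mu(\lambda,b)\le-\alpha$ by testing the eigenvalue equation against $1/\psi$, and the lower bound $\mu(\lambda,b)\ge-\alpha-\alpha^2L^2$ by chaining together the facts already established.

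For the upper bound, let $\psi>0$ be the $L$-periodic principal eigenfunction of Definition \ref{definitionofprincipaleigenvalue}, which exists by Proposition \ref{principaleigenvalueisunique}, so that $-\psi''+2\lambda\psi'-b\psi=\mu(\lambda,b)\psi$. Since $b\in C^1(\mathbb R)$, bootstrapping this ODE gives $\psi\in C^3$, and $\psi$ is bounded away from $0$ by periodicity, so dividing by $\psi$ is legitimate. Integrating the identity $-\psi''/\psi+2\lambda\,\psi'/\psi-b=\mu(\lambda,b)$ over $[0,L)$: the drift term contributes $2\lambda\int_{[0,L)}(\log\psi)'\,dx=0$ by periodicity; writing $\psi''/\psi=(\psi'/\psi)'+(\psi'/\psi)^2$ shows $\int_{[0,L)}(-\psi''/\psi)\,dx=-\int_{[0,L)}(\psi'/\psi)^2\,dx$, again because $\psi'/\psi$ is $L$-periodic; and $\int_{[0,L)}b\,dx=\alpha L$. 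Hence
\[
\mu(\lambda,b)=-\alpha-\frac1L\int_{[0,L)}\Big(\frac{\psi'}{\psi}\Big)^{2}dx\ \le\ -\alpha ,
\]
with equality exactly when $\psi$, and therefore $b$, is constant.

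For the lower bound, Lemma \ref{LamdaEqual0} gives $\mu(\lambda,b)\ge\mu(0,b)$; the variational formula \eqref{variational} expresses $\mu(0,b)$ as the minimum over $\phi\in E_L$ of $\big(\int_{[0,L)}\phi'^2-\int_{[0,L)}b\phi^2\big)/\int_{[0,L)}\phi^2$; and Lemma \ref{IntegralInequality} bounds the numerator below by $-(\alpha+\alpha^2L^2)\int_{[0,L)}\phi^2$ for every admissible $\phi$. Combining the three facts yields $\mu(\lambda,b)\ge\mu(0,b)\ge-(\alpha+\alpha^2L^2)$, which is the remaining half of \eqref{estimateformu}. (If $\lambda\le 0$ is to be allowed one first reduces to $\lambda\ge 0$ via $\mu(\lambda,b)=\mu(-\lambda,b)$, valid because $-L_{-\lambda,b}$ is the formal $L^2$-adjoint of $-L_{\lambda,b}$ and a principal eigenvalue equals that of its adjoint, the case $\lambda=0$ being covered directly by \eqref{variational} and Lemma \ref{IntegralInequality}.)

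The argument is short, and the only genuinely new point is the choice of test object for the upper bound: pairing the eigenvalue equation with $\psi$ merely reproduces Lemma \ref{LamdaEqual0} and points the inequality the wrong way, whereas dividing by $\psi$ makes the first-order term disappear and turns $-\psi''/\psi$ into the manifestly nonnegative $\int_{[0,L)}(\psi'/\psi)^2$, giving the bound with essentially no computation. The one technical item to check is the regularity of $\psi$ (so that $\psi'/\psi\in C^1$ and the fundamental-theorem-of-calculus steps are valid), which is immediate from $b\in C^1(\mathbb R)$; this is the main, and rather mild, obstacle.
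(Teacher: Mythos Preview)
Your proof is correct and follows essentially the same approach as the paper: divide the eigenvalue equation by $\psi$ and integrate over a period to obtain $(\mu(\lambda,b)+\alpha)L+\int_{[0,L)}(\psi'/\psi)^2\,dx=0$ for the upper bound, and combine Lemma~\ref{LamdaEqual0} with Lemma~\ref{IntegralInequality} for the lower bound. Your version is more detailed about the regularity of $\psi$ and the handling of the individual terms, but the strategy is identical.
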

\begin{proof}
Dividing \eqref{identityofmulambdabbar} by $\psi(x)$ and
integrating it from $0$ to $L,$ we get
\begin{equation}\label{dividingbypsiandintegrate}
(\mu(\lambda,b) +\alpha)L +\int_{[0,L)}
\big(\frac{\psi'}{\psi}\big)^2dx=0. \end{equation} It implies that
$\mu\leq -\alpha.$ By Lemma \ref{IntegralInequality} and
\ref{LamdaEqual0}, \[ \mu(\lambda,b)\geq\mu(0,b)\geq
-\alpha-\alpha^2L^2.
\]
\end{proof}
\begin{lem}\label{LinearPsi}
There exists a constant $F>0$ such that for any $\lambda\geq 0,$
$b\in\Lambda(\alpha),$ the principal eigenfunction $\psi\in E_L$
 of the operator $-L_{\lambda,b}$ satisfies
\[ \frac{\max\psi}{\min\psi}\leq F. \]
\end{lem}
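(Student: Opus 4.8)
The plan is to control the oscillation of $\log\psi$ over one period by means of the Rayleigh-type identity \eqref{dividingbypsiandintegrate}, which was established inside the proof of Lemma \ref{Boundednessofmu}, together with the two-sided bound on $\mu(\lambda,b)$ from that same lemma. Since $b\in\Lambda(\alpha)\subset C^1(\mathbb R)$, the principal eigenfunction $\psi$ is a classical, strictly positive, $L$-periodic solution of \eqref{identityofmulambdabbar}, so $v:=\log\psi$ is a well-defined $L$-periodic function with $v'=\psi'/\psi$.

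First I would read off from \eqref{dividingbypsiandintegrate} the identity
\[
\int_{[0,L)}\Big(\frac{\psi'(x)}{\psi(x)}\Big)^2dx=-\big(\mu(\lambda,b)+\alpha\big)L,
\]
and then use the lower bound $\mu(\lambda,b)\geq-\alpha-\alpha^2L^2$ of Lemma \ref{Boundednessofmu} to deduce
\[
\int_{[0,L)}v'(x)^2\,dx\leq\alpha^2L^3,
\]
which is uniform in $\lambda\geq0$ and $b\in\Lambda(\alpha)$. Next, for any $x_1,x_2\in[0,L]$ the Cauchy--Schwarz inequality gives
\[
|v(x_2)-v(x_1)|=\Big|\int_{x_1}^{x_2}v'(x)\,dx\Big|\leq\Big(\int_{[0,L)}v'(x)^2\,dx\Big)^{1/2}|x_2-x_1|^{1/2}\leq\sqrt{\alpha^2L^3}\,\sqrt{L}=\alpha L^2.
\]
Since $\psi$ is $L$-periodic, both $\max_{\mathbb R}\psi$ and $\min_{\mathbb R}\psi$ are attained on $[0,L]$, hence $\log(\max\psi/\min\psi)=\max v-\min v\leq\alpha L^2$, and the lemma follows with $F:=e^{\alpha L^2}$.

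The argument is essentially elementary once the uniform $L^2$ bound on $\psi'/\psi$ is in hand, so I do not expect a genuine obstacle; the only point needing attention is that every estimate must be \emph{uniform} in both $\lambda$ and $b$, which is exactly what Lemma \ref{Boundednessofmu} provides. (The same computation would in fact work verbatim for a general measure $\bar b\in\overline{\Lambda}(\alpha)$, since then $\psi\in H^1_{loc}(\mathbb R)$ still gives $\psi'/\psi\in L^2([0,L))$ and $v\in H^1_{loc}(\mathbb R)$ is absolutely continuous, but for this lemma it is enough to treat smooth $b$.)
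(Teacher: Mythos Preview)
Your proof is correct and follows essentially the same approach as the paper: both use the identity \eqref{dividingbypsiandintegrate} together with the lower bound $\mu(\lambda,b)\geq -\alpha-\alpha^2L^2$ from Lemma~\ref{Boundednessofmu} to control $\int_{[0,L)}(\psi'/\psi)^2\,dx$, and then apply Cauchy--Schwarz to bound the oscillation of $\log\psi$, arriving at the identical constant $F=e^{\alpha L^2}$. The only cosmetic difference is that the paper first bounds $|\log(\psi(x_2)/\psi(x_1))|$ by $\int_{[0,L)}|\psi'/\psi|\,dx$ before applying Cauchy--Schwarz, whereas you apply Cauchy--Schwarz directly on the interval $[x_1,x_2]$.
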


\begin{proof}
By equation \eqref{dividingbypsiandintegrate}, for any $x_1,x_2\in
[0,L),$
\[
\begin{split}
 \Big|\ln\frac{\psi(x_2)}{\psi(x_1)}\Big|&=|\ln\psi(x_2)-\ln\psi(x_1)|=\Big|\int_{x_1}^{x_2}\frac{\psi'}{\psi}dx\Big|\\
  &\leq\int_{[0,L)}\big|\frac{\psi'}{\psi}\big|dx\leq\sqrt{L}\;\sqrt{\int_{[0,L)}\big(\frac{\psi'}{\psi}\big)^2dx }\\
 &\leq L\sqrt{-(\mu(\lambda,b)+\alpha)}.
\end{split}
\]
Then for any $x_1,x_2\in[0,L],$ \[ \ e^{-L\sqrt{-(\mu(\lambda,b)
+\alpha})}\leq\frac{\psi(x_1)}{\psi(x_2)}\leq
e^{L\sqrt{-(\mu(\lambda,b) +\alpha})}.
\] This implies \[
\ \frac{\max\psi}{\min\psi} \leq e^{L\sqrt{-(\mu(\lambda,b)
+\alpha})}.
\]
Note that
\[
\ \sqrt{-(\mu(\lambda,b)+\alpha)}\leq\sqrt{\alpha^2L^2}=\alpha L.
\]
Therefore, by setting $F=e^{\alpha L^2},$ we obtain the desired
estimate.
\end{proof}
\subsection{Uniform bounds of $c^*(b)$ for smooth $b$}\label{boundednessSection}
In this subsection, we will prove Proposition \ref{boundedness}.
Before doing that, we recall the following pointwise max-min
formula for the principal eigenvalue.
\begin{pro}\label{maxminrepresentation} For $b\in \Lambda(\alpha),$
\[
\ \mu(\lambda,b)=\max_{\psi \in E_L\cap C^2(\mathbb{R})} \,\inf_{x
\in\mathbb
R}\frac{-\psi''(x)+2\lambda\psi'(x)-b(x)\psi(x)}{\psi(x)}.
\]
\end{pro}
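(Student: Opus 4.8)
The plan is to establish the inequality ``$\ge$'' and the inequality ``$\le$'' separately; together they yield the equality and, at the same time, show that the supremum on the right-hand side is attained, i.e.\ is a maximum.

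For ``$\ge$'', recall that since $b$ is smooth the positive $L$-periodic principal eigenfunction $\psi^*$ furnished by Proposition~\ref{principaleigenvalueisunique} is, by elliptic bootstrapping in the equation $-(\psi^*)''+2\lambda(\psi^*)'-b\psi^*=\mu(\lambda,b)\psi^*$, of class $C^2$ (indeed $C^3$), so that $\psi^*\in E_L\cap C^2(\mathbb{R})$. For this particular test function the quotient inside the infimum is identically the constant $\mu(\lambda,b)$, hence its infimum over $x\in\mathbb{R}$ equals $\mu(\lambda,b)$. Therefore the right-hand side is $\ge\mu(\lambda,b)$, and the supremum is attained at $\psi^*$.

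For ``$\le$'', fix an arbitrary $\psi\in E_L\cap C^2(\mathbb{R})$ and put
\[
m:=\inf_{x\in\mathbb{R}}\frac{-\psi''(x)+2\lambda\psi'(x)-b(x)\psi(x)}{\psi(x)}.
\]
Since $\psi$ and $b$ are continuous and $L$-periodic and $\psi>0$, this quotient is a continuous $L$-periodic function, so $m$ is finite, equal to its minimum over $[0,L]$, and $-\psi''+2\lambda\psi'-b\psi\ge m\psi$ holds pointwise on $\mathbb{R}$. It remains to prove $m\le\mu(\lambda,b)$. The ratio $\psi/\psi^*$ is continuous, positive and $L$-periodic, so it attains a positive minimum $t^*:=\min_{x}\psi(x)/\psi^*(x)>0$; by $L$-periodicity we may choose a minimizer $x_0\in(0,L)$, so that $w:=\psi-t^*\psi^*$ satisfies $w\ge0$ on $\mathbb{R}$, $w(x_0)=0$, $w'(x_0)=0$ and $w''(x_0)\ge0$. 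Subtracting $t^*$ times the eigenvalue identity for $\psi^*$ from the differential inequality for $\psi$ and evaluating at $x_0$, and using $\psi(x_0)=t^*\psi^*(x_0)$, gives
\[
-w''(x_0)+2\lambda w'(x_0)-b(x_0)w(x_0)\ \ge\ m\,\psi(x_0)-\mu(\lambda,b)\,t^*\psi^*(x_0)=\big(m-\mu(\lambda,b)\big)\,t^*\psi^*(x_0).
\]
The left-hand side equals $-w''(x_0)\le0$, while $t^*\psi^*(x_0)>0$; hence $m-\mu(\lambda,b)\le0$. As $\psi$ was arbitrary, this is the desired inequality.

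The calculation is routine; the essential use of the smoothness of $b$ is that it makes the principal eigenfunction $\psi^*$ a genuine $C^2$ function, so that $\psi^*$ belongs to the admissible class and the classical touching-point comparison at $x_0$ is legitimate. (One could alternatively bound $\mu(\lambda,b)$ via the variational formula \eqref{variational} together with Lemma~\ref{LamdaEqual0}, but that does not directly produce the \emph{pointwise} formula for $\lambda\neq0$, where the operator is not self-adjoint, so the comparison argument above is the natural route.) The one step that would require genuinely new ideas is extending the identity to a general measure $\bar b$, where $\psi^*$ is merely $H^1_{loc}$ and the comparison step would have to be performed in a weak form — which is presumably why the proposition is stated only for smooth $b$.
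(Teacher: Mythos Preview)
Your proof is correct and is the standard touching-point comparison argument for the pointwise max--min characterization of the principal eigenvalue. The paper in fact omits the proof entirely, remarking only that it is easy and pointing to a more general version in \cite{BH}; your argument is exactly the kind of routine verification the authors had in mind.

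One cosmetic remark: when you write ``by $L$-periodicity we may choose a minimizer $x_0\in(0,L)$'', the minimizer could a priori fall at an endpoint; but since $w$ is $L$-periodic and $C^2$, any minimizer on $\mathbb{R}/L\mathbb{Z}$ is an interior critical point in the relevant sense, so $w'(x_0)=0$ and $w''(x_0)\ge0$ hold regardless. This does not affect the validity of the argument.
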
 We omit the proof of the above proposition as it is easy. A more general version of the above proposition is fonud in [\,4\,;\,Proposition 5.7\,].
 The following lemma easily follows form the definition
$c_e^*(\bar{b})$ in Definition \ref{Defofcestar}.
\begin{pro}\label{weinbergereigenvalue}
For $b\in\Lambda(\alpha),$
\begin{equation}\label{PropositionForSpeedCestar}
\
c_e^*(b)=\min_{\lambda>0}\frac{-\mu(\lambda,b)+\lambda^2}{\lambda}.
\end{equation}
\end{pro}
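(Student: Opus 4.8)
The plan is to recognize the condition defining $c_e^*(b)$ as a one-parameter minimization and then check that the infimum is attained. Fix $b\in\Lambda(\alpha)$. For each $\lambda>0$ the equation $\mu(\lambda,b)=\lambda^2-\lambda c$ has the unique solution
\[
c=\gamma(\lambda):=\frac{\lambda^2-\mu(\lambda,b)}{\lambda}=\lambda-\frac{\mu(\lambda,b)}{\lambda},
\]
and since $\mu(\lambda,b)\le-\alpha<0$ by Lemma~\ref{Boundednessofmu}, every such $c$ is automatically positive. Hence the set $\{c>0\mid\exists\,\lambda>0,\ \mu(\lambda,b)=\lambda^2-\lambda c\}$ occurring in Definition~\ref{Defofcestar} is precisely $\{\gamma(\lambda)\mid\lambda>0\}$, so $c_e^*(b)=\inf_{\lambda>0}\gamma(\lambda)$. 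It then remains only to upgrade this infimum to a minimum, which is exactly \eqref{PropositionForSpeedCestar}.

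First I would record the coercivity of $\gamma$. By Lemma~\ref{Boundednessofmu} one has $\alpha\le-\mu(\lambda,b)\le\alpha+\alpha^2L^2$, hence
\[
\gamma(\lambda)=\lambda+\frac{-\mu(\lambda,b)}{\lambda}\ \ge\ \max\Big\{\lambda,\ \tfrac{\alpha}{\lambda}\Big\},
\]
so $\gamma(\lambda)\to+\infty$ both as $\lambda\to0^{+}$ and as $\lambda\to+\infty$ (in particular $\gamma(\lambda)\ge\lambda+\alpha/\lambda\ge2\sqrt\alpha$, consistent with the known lower bound). Choosing $0<\delta<R<\infty$ with $\gamma(\lambda)>\gamma(\sqrt{\alpha})$ whenever $\lambda\notin[\delta,R]$, we get $\inf_{\lambda>0}\gamma=\inf_{[\delta,R]}\gamma$, an infimum over a compact set.

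The only genuine work is the continuity of $\lambda\mapsto\mu(\lambda,b)$ on $(0,\infty)$, which I would extract from the pointwise max-min formula of Proposition~\ref{maxminrepresentation}. Let $\psi_\lambda\in E_L\cap C^2$ be the principal eigenfunction at parameter $\lambda$, normalized by $\max\psi_\lambda=1$; then $\min\psi_\lambda\ge1/F$ by Lemma~\ref{LinearPsi}, and, since $(-\psi_\lambda''+2\lambda\psi_\lambda'-b\psi_\lambda)/\psi_\lambda\equiv\mu(\lambda,b)$, inserting $\psi_{\lambda_0}$ (resp. $\psi_\lambda$) as a trial function in the formula at parameter $\lambda$ (resp. $\lambda_0$) gives
\[
|\mu(\lambda,b)-\mu(\lambda_0,b)|\ \le\ 2\,|\lambda-\lambda_0|\,\max\Big\{\|\psi_{\lambda_0}'/\psi_{\lambda_0}\|_{L^\infty},\ \|\psi_\lambda'/\psi_\lambda\|_{L^\infty}\Big\}.
\]
To close this it suffices to bound $\|\psi_\lambda'/\psi_\lambda\|_{L^\infty}$ uniformly for $\lambda$ in a compact subset of $(0,\infty)$: from \eqref{dividingbypsiandintegrate} and $\psi_\lambda\le1$ one has $\int_{[0,L)}\psi_\lambda'^2\le\int_{[0,L)}(\psi_\lambda'/\psi_\lambda)^2=-(\mu(\lambda,b)+\alpha)L\le\alpha^2L^3$, so $\psi_\lambda$ stays bounded in $\Hp$; then, reading the eigenvalue equation as $-\psi_\lambda''+2\lambda\psi_\lambda'=(b+\mu(\lambda,b))\psi_\lambda$ with right-hand side bounded in $L^2$, elliptic regularity bounds $\psi_\lambda$ in $H^2_{loc}$ and hence in $C^1$, uniformly in $\lambda$; dividing by $\min\psi_\lambda\ge1/F$ yields the bound. (Alternatively one may simply invoke the analytic dependence of a simple principal eigenvalue on the parameter.) With continuity in hand, $\gamma$ is continuous on $[\delta,R]$ and attains its minimum there, so $c_e^*(b)=\min_{\lambda>0}\gamma(\lambda)$.

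The main obstacle is precisely this last step: the algebra of the first two paragraphs is immediate, whereas turning the infimum into a minimum rests on the continuity of the principal eigenvalue in $\lambda$, together with the uniform control of the eigenfunctions furnished by Lemmas~\ref{Boundednessofmu} and~\ref{LinearPsi} (or, more cheaply, on citing standard perturbation theory for simple eigenvalues).
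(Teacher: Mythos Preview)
Your argument is correct and follows the same route the paper intends: the paper simply states that the formula ``easily follows from the definition $c_e^*(\bar b)$ in Definition~\ref{Defofcestar}'' and gives no further proof, which amounts to your first paragraph's algebraic rewrite $c_e^*(b)=\inf_{\lambda>0}\gamma(\lambda)$. Your additional work---coercivity from Lemma~\ref{Boundednessofmu} and continuity of $\lambda\mapsto\mu(\lambda,b)$ via the max--min formula and uniform eigenfunction control---is a legitimate (and in fact necessary) justification of why the infimum is a genuine minimum, a point the paper takes for granted; for smooth $b$ this continuity can also be obtained more cheaply from standard analytic perturbation theory for simple eigenvalues, as you note.
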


\begin{proof}[Proof of Proposition \ref{boundedness}] By
Proposition \ref{cstarbequalscestarbforsmoothb}, it suffices to
show that
\[
\ 2\sqrt{\alpha}\leq c_e^*(b)\leq 2\sqrt{\alpha+\alpha^2L^2}.
\] This follows immediately from \eqref{estimateformu} and
\eqref{PropositionForSpeedCestar}. The proposition is proven.
\end{proof}

\subsection{Uniform bounds of $c_e^*(\bar{b})$ when $\bar{b}$ is a
measure}\label{uniformboundofcestarbbar}

In the previous subsection, we discussed the boundedness of
$c^*(b)$ for $b\in\Lambda(\alpha).$ In this subsection, we derive
the same bounds for $c_e^*(\bar{b})$ when
 $\bar{b}\in\overline{\Lambda}(\alpha).$ We start with the following proposition:

\fbox{Change begins}
\begin{pro}\label{convergenceofeigenvalue}
Let $b_n$ be a sequence in $\Lambda(\alpha)$ converging to some
$\bar{b}$ in the weak \,$*$ sense and let $\lambda_n\in\mathbb{R}$
be a sequence converging to some $\lambda\in\mathbb{R}.$ Then
there exists a constant $\beta$ such that
\[
\mu(\lambda_n,b_n)\to\beta\,\,\,\,\,\,\hbox{ as }n\to\infty.
\] Furthermore,
\[
\ \beta=\mu(\lambda,\bar{b}).
\]
\end{pro}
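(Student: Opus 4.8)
The plan is to combine a compactness argument for the eigenfunctions with a passage to the limit in the weak form of the eigenvalue equation, then invoke uniqueness of the principal eigenvalue (Proposition \ref{principaleigenvalueisunique}) to identify the limit. First I would normalize the principal eigenfunctions $\psi_n$ of $-L_{\lambda_n,b_n}$ so that $\max_{[0,L]}\psi_n=1$. By Lemma \ref{LinearPsi} there is a constant $F$, independent of $n$, with $1/F\leq\psi_n\leq 1$ on $\mathbb{R}$; in particular the $\psi_n$ are uniformly bounded and uniformly bounded away from $0$. Testing \eqref{identityofmulambdabbar} against $\psi_n$ and integrating over $[0,L)$, together with Lemma \ref{IntegralInequality} and Lemma \ref{Boundednessofmu}, gives a uniform $\Hp$-bound on $\psi_n$: the quantity $\int_{[0,L)}\psi_n'^2$ is controlled by $\int_{[0,L)}b_n\psi_n^2+|\mu(\lambda_n,b_n)|\int_{[0,L)}\psi_n^2$, and by \eqref{bn-to-b-L} applied to the constant function (or directly since $b_n\in\Lambda(\alpha)$) the term $\int_{[0,L)}b_n\psi_n^2\leq\alpha L$, while $|\mu(\lambda_n,b_n)|\leq\alpha+\alpha^2L^2$ by \eqref{estimateformu}. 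Hence $\{\psi_n\}$ is bounded in $\Hp$.

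Next I would extract a subsequence along which $\mu(\lambda_n,b_n)\to\beta$ for some $\beta\in[-\alpha-\alpha^2L^2,-\alpha]$ (possible by \eqref{estimateformu}), and along which $\psi_n\rightharpoonup\psi$ weakly in $\Hp$ and, using the compact embedding $\Hp\hookrightarrow C(\mathbb{R})\cap L^\infty(\mathbb{R})$ recalled in the text, $\psi_n\to\psi$ uniformly on $\mathbb{R}$. The uniform bounds $1/F\leq\psi_n\leq 1$ pass to the limit, so $\psi\in E_L$ and in particular $\psi>0$ and is $L$-periodic. Now I pass to the limit in the weak formulation: for a test function $\varphi\in C_0^\infty(\mathbb{R})$,
\[
\int_{\mathbb{R}}\bigl(-\varphi''-2\lambda_n\varphi'\bigr)\psi_n\,dx-\int_{\mathbb{R}}b_n\varphi\psi_n\,dx=\mu(\lambda_n,b_n)\int_{\mathbb{R}}\varphi\psi_n\,dx.
\]
The first and last integrals converge by uniform convergence $\psi_n\to\psi$ and $\lambda_n\to\lambda$. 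For the middle term, since $\psi_n\to\psi$ uniformly on the (compact) support of $\varphi$, one writes $b_n\varphi\psi_n=b_n(\varphi\psi)+b_n\varphi(\psi_n-\psi)$; the first piece converges to $\int\bar b\,\varphi\psi$ by weak-$*$ convergence (the function $\varphi\psi$ is continuous and compactly supported, and $\bar b$ is a Borel measure, so this is legitimate — Lemma \ref{lem:A} or simply the definition of weak-$*$ convergence applies), and the second piece is bounded by $\|\varphi\|_\infty\|\psi_n-\psi\|_{L^\infty(\mathrm{supp}\,\varphi)}\int_{\mathrm{supp}\,\varphi}b_n\,dx\to 0$ because $\int_{\mathrm{supp}\,\varphi}b_n\,dx$ is uniformly bounded (each $b_n$ has density with $\int_{[0,L)}b_n=\alpha L$). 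Therefore $\psi$ satisfies $-\psi''+2\lambda\psi'-\bar b\psi=\beta\psi$ in the weak sense with $\psi\in E_L$, so by Definition \ref{definitionofprincipaleigenvalue} and the uniqueness in Proposition \ref{principaleigenvalueisunique} we get $\beta=\mu(\lambda,\bar b)$.

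Finally, since every subsequential limit of the bounded sequence $\mu(\lambda_n,b_n)$ equals the single value $\mu(\lambda,\bar b)$, the whole sequence converges to $\mu(\lambda,\bar b)$, which proves both assertions of the proposition. I expect the main obstacle to be the treatment of the term $\int b_n\varphi\psi_n$: one must be careful that weak-$*$ convergence of the measures $b_n$ is only against \emph{fixed} continuous test functions, whereas here the test function $\varphi\psi_n$ itself varies with $n$; the splitting above resolves this, but it is the one place where the measure-valued nature of $\bar b$ really enters and where the uniform bounds on $\psi_n$ from Lemma \ref{LinearPsi} and on $\int_{[0,L)}b_n$ are essential. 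A secondary point requiring care is the regularity claim that the weak solution $\psi\in\Hp$ of the limiting equation is genuinely the principal eigenfunction in the sense of Definition \ref{definitionofprincipaleigenvalue} (continuity and positivity), which follows from the embedding $\Hp\hookrightarrow C(\mathbb{R})$ and the uniform lower bound $\psi\geq 1/F$.
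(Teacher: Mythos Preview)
Your proposal is correct and follows essentially the same route as the paper's proof: normalize the eigenfunctions, use Lemma~\ref{LinearPsi} and Lemma~\ref{Boundednessofmu} to obtain uniform $L^\infty$ and $\Hp$ bounds, extract a subsequence converging weakly in $\Hp$ and strongly in $C(\mathbb{R})$, pass to the limit in the weak equation, and conclude via uniqueness of the principal eigenvalue. The only notable difference is that you spell out the splitting $b_n\varphi\psi_n = b_n(\varphi\psi) + b_n\varphi(\psi_n-\psi)$ to justify the limit in the measure term, whereas the paper simply asserts the limit holds; your added care here is appropriate and the argument is sound.
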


\begin{proof} Lemma \ref{Boundednessofmu} shows that $\{\mu(\lambda_n,b_n)\}$ is
uniformly bounded. There exist functions $\psi_n\in E_L$ with
$\|\psi_n\|_{L^{\infty}}=1$ satisfying
\[
\ -\psi_n''+2\lambda_n\psi_n'-b_n\psi_n=\mu(\lambda,b_n)\psi_n.
\]
By Lemma \ref{LinearPsi}, there exists a constant $F>0$ such that
${\max \psi_n}/{\min\psi_n}<F.$ \fbox{Change begins} Now we
multiply
\[ \
-\psi''_n+2\lambda_n\psi'_n-(\mu(\lambda,b_n)+b_n(x))\psi_n=0
\] with $\psi_n$ and integrate it from $0$ to $L.$ We get
\[
\
\int_{[0,L)}(\psi'_n)^2dx=\int_{[0,L)}(\mu(\lambda,b_n)+b_n(x))\psi_n^2\,dx.
\]
Hence
\[
\
\int_{[0,L)}(\psi'_n)^2dx+\int_{[0,L)}(\psi_n)^2dx=\int_{[0,L)}\big(1+\mu(\lambda,b_n)+b_n(x)\big)\psi_n^2\,dx.
\]
By the fact that $\|\psi_n\|_{L^\infty}=1$ and
\eqref{estimateformu}, we see from the equation above that
$\{\|\psi_n\|_{\Hp}\}$ is uniformly bounded. \fbox{Change ends}
Then there exists $\psi\in \Hp$ such that for some $\beta$ there
exists a subsequence $\{\psi_{n_k}\},$
\[
\ \psi_{n_k}\to\psi \,\,\,\,\hbox{   weakly in } \Hp \hbox{ and
strongly in } C(\mathbb{R})
\]and $\mu(\lambda_{n_k},b_{n_k})\to\beta$ as $n_k\to +\infty.$
Therefore
\[
\ -\psi''+2\lambda\psi'-\bar{b}\psi=\beta\psi \,\,\,\,\,\hbox{ in
the weak sense,}
\] So $\mu(\lambda,\bar{b})=\beta.$ We will prove the uniqueness of $\mu(\lambda,\bar{b})$ in Proposition
\ref{uniqueeigenvalue}.\fbox{Check} Therefore, it is not difficult
to see $\mu(\lambda_n,b_n)\to\mu(\lambda,\bar{b}).$

\end{proof}

\fbox{Change ends}

The following corollary follows immediately from Proposition
\ref{convergenceofeigenvalue}:
\begin{corol}
Let $b_n$ be a sequence in $\Lambda(\alpha)$ converging to
$\bar{b}\in\overline{\Lambda}(\alpha)$ in the weak \,$*$ sense.
Then $\mu(\lambda,b_n)\to\mu(\lambda,\bar{b})$ locally uniformly
in $\lambda\geq 0.$
\end{corol}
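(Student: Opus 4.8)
The plan is to deduce the locally uniform convergence from a routine compactness argument built on Proposition~\ref{convergenceofeigenvalue}. Since every compact subset of $[0,\infty)$ is contained in some interval $[0,R]$, it suffices to prove that $\mu(\lambda,b_n)\to\mu(\lambda,\bar b)$ uniformly on $[0,R]$ for each fixed $R>0$. Suppose this fails. Then there are $\ep>0$, a subsequence $\{b_{n_k}\}$, and points $\lambda_k\in[0,R]$ with
\[
|\mu(\lambda_k,b_{n_k})-\mu(\lambda_k,\bar b)|\ge\ep\qquad(k=1,2,\dots).
\]
By compactness of $[0,R]$ I would pass to a further subsequence (still written $\lambda_k$) so that $\lambda_k\to\lambda_*\in[0,R]$; note that $\{b_{n_k}\}$, being a subsequence of a weak\,$*$ convergent sequence, still converges weak\,$*$ to $\bar b$.

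First I would apply Proposition~\ref{convergenceofeigenvalue} to the sequence $b_{n_k}\to\bar b$ together with the parameters $\lambda_k\to\lambda_*$, which gives $\mu(\lambda_k,b_{n_k})\to\mu(\lambda_*,\bar b)$. In view of the displayed gap it then remains only to show $\mu(\lambda_k,\bar b)\to\mu(\lambda_*,\bar b)$, i.e.\ that the map $\lambda\mapsto\mu(\lambda,\bar b)$ is continuous for the \emph{fixed} measure $\bar b$. This is the heart of the matter, and I expect it to be the only genuine difficulty: Proposition~\ref{convergenceofeigenvalue} as stated lets the coefficient vary along a sequence inside $\Lambda(\alpha)$, so it does not \emph{literally} yield continuity in $\lambda$ with the coefficient frozen at a general measure.

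There are two ways to supply this continuity. The clean one is operator-theoretic: using the reformulation of Section~\ref{pulsating travelling waves}, in which \eqref{identityofmulambdabbar} is recast as an eigenvalue problem for a compact positive operator, one checks that this operator depends norm-continuously on $\lambda$, and then continuity of its simple principal eigenvalue $\mu(\lambda,\bar b)$ follows by standard perturbation theory. Alternatively one can stay self-contained: for each $k$ choose $c_k\in\Lambda(\alpha)$ with $|\mu(\lambda_k,c_k)-\mu(\lambda_k,\bar b)|<1/k$ and with $c_k$ weak\,$*$-close to $\bar b$ (both possible because, for fixed $\lambda_k$, one has $\mu(\lambda_k,c)\to\mu(\lambda_k,\bar b)$ along any weak\,$*$-approximating sequence in $\Lambda(\alpha)$ by Proposition~\ref{convergenceofeigenvalue}, and weak\,$*$ proximity to $\bar b$ can be arranged by testing against a countable dense family); then $c_k\to\bar b$ weak\,$*$ and $\lambda_k\to\lambda_*$, so Proposition~\ref{convergenceofeigenvalue} gives $\mu(\lambda_k,c_k)\to\mu(\lambda_*,\bar b)$, whence $\mu(\lambda_k,\bar b)\to\mu(\lambda_*,\bar b)$. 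Either way, $\mu(\lambda_k,b_{n_k})$ and $\mu(\lambda_k,\bar b)$ share the limit $\mu(\lambda_*,\bar b)$, contradicting the gap $\ge\ep$ and proving the corollary. I would finally remark that Lemma~\ref{Boundednessofmu} confines all the eigenvalues in play to $[-\alpha-\alpha^2L^2,-\alpha]$, so there is no escape to infinity to guard against; the argument is genuinely just compactness in a bounded interval plus Proposition~\ref{convergenceofeigenvalue}. (If one prefers, the classical convexity of $\lambda\mapsto\mu(\lambda,b)$ for smooth $b$, together with its persistence for $\bar b$, makes the corollary immediate from the pointwise convergence, since convex functions converging pointwise on an interval converge uniformly on compact subintervals; but the contradiction argument above avoids invoking convexity.)
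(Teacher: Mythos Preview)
Your argument is correct and is precisely the standard compactness/subsequence deduction the paper has in mind when it says the corollary ``follows immediately'' from Proposition~\ref{convergenceofeigenvalue}. The one place where you are more careful than the paper is the continuity of $\lambda\mapsto\mu(\lambda,\bar b)$ for fixed $\bar b$: the paper simply asserts, in the remark right after the corollary, that this continuity ``follows easily from Proposition~\ref{convergenceofeigenvalue}'', whereas you correctly flag that the proposition literally requires the varying coefficients to lie in $\Lambda(\alpha)$ and then supply two valid workarounds (the diagonal choice of $c_k\in\Lambda(\alpha)$, or the compact-operator reformulation of Section~\ref{pulsating travelling waves}). Either of your fixes is fine, and the diagonal argument is exactly in the spirit of how the paper uses Proposition~\ref{convergenceofeigenvalue} elsewhere; so there is no substantive difference in approach, only in how much detail is written out.
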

We also remark that the continuity of
$\lambda\mapsto\mu(\lambda,\bar{b})$ follows easily from
Proposition \ref{convergenceofeigenvalue}.
\begin{proof}[Proof of Proposition \ref{Convergenceofcstarbn}]
By Definition \ref{Defofcestar}, we have
\[
\begin{split}
 &c_e^*(b_n):=\inf\{c>0\,|\,\exists \lambda>0 \hbox{ such that }\mu(\lambda,b_n)=\lambda^2-\lambda c\}\\
 &c_e^*(\bar{b}):=\inf\{c>0 \,| \,\exists \lambda>0 \hbox{ such that }\mu(\lambda,\bar{b})=\lambda^2-\lambda c \}.
\end{split}
\]
Since $\mu(\lambda,b_n)\to\mu(\lambda,\bar{b})$ locally uniformly
in $\lambda$ and since $\mu(\lambda,b_n)$ is uniformly bounded by
Lemma \ref{Boundednessofmu}. We immediately obtain the conclusion.

%Since $\{c^*(b_n)\}$ is uniformly bounded, we may find a
%subsequence $\{c^*(b_{n_k})\}$ such that there exists a constant
%$\beta>0,$ $\lim_{n_k\to\infty}c^*(b_{n_k})=\beta.$ By Lemma
%\ref{generalizedpsiexists}, we may find $\lambda^*>0,$ $ \psi^*\in
%\Hp$ satisfying $\psi^*(x)>0,\,\psi^*(x)\equiv\psi^*(x+L)$ such
%that
%\[
%\
%-{\psi^*}''(x)+2\lambda^*{\psi^*}'-({\lambda^*}^2-\lambda^*\beta+\bar{b})\psi^*(x)=0.
%\] This means $c_e^*(\bar{b})\leq \beta.$%
%
%Now we need to show that $c_e^*(\bar{b})= \beta.$ We know that
%there exists a $\lambda>0,$ such that
%$\mu(\lambda,\bar{b})-(\lambda^2-\lambda c_e^*(\bar{b}))=0.$
%{}From this equation, we get
%\[
%\
%\mu(\lambda,b_{n_k})-\big(\lambda^2-\lambda(c_e^*(\bar{b})-\frac{\mu(\lambda,b_{n_k})-\mu(\lambda,\bar{b})}{\lambda})\big)=0.
%\]
%According to the definition of $c_e^*(b_{n_k}),$ we get
%$c_e^*(b_{n_k})\leq
%c_e^*(\bar{b})-\frac{\mu(\lambda,b_{n_k})-\mu(\lambda,\bar{b})}{\lambda}.$
%Let ${n_k}\to\infty.$ We have $\beta\leq c_e^*(\bar{b}).$ So we
%get $c_e^*(\bar{b})= \beta.$ {}From the uniqueness of
%$c_e^*(\bar{b}),$ we have $c_e^*(b_n)\to c_e^*(\bar{b}).$
\end{proof}

Combining Proposition \ref{boundedness} and
\ref{Convergenceofcstarbn}, and recalling that every
$\bar{b}\in\overline{\Lambda}(\alpha)$ can be expressed as a weak
\,$*$ limit of sequence in $\Lambda(\alpha)$ (see
\eqref{DefinitionOfLamdabarAlpha}), we obtain the following
proposition.
\begin{pro} For any $\bar{b}\in\overline{\Lambda}(\alpha),$
\[
\ 2\sqrt{\alpha}\leq c_e^*(\bar{b})\leq
2\sqrt{\alpha+\alpha^2L^2}.
\]
\end{pro}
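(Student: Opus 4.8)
\begin{proof}[Proof of the proposition]
The plan is to reduce the estimate for a general measure $\bar{b}$ to the already established estimate for smooth coefficients, by an approximation argument together with the continuity of $c_e^*$ proved in Proposition \ref{Convergenceofcstarbn}.

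First, fix $\bar{b}\in\overline{\Lambda}(\alpha)$. By the very definition of $\overline{\Lambda}(\alpha)$ (see \eqref{DefinitionOfLamdabarAlpha}), there exists a sequence $\{b_n\}\subset\Lambda(\alpha)$ with $b_n\to\bar{b}$ in the weak\,$*$ sense. Since each $b_n$ is a smooth element of $\Lambda(\alpha)$, Proposition \ref{cstarbequalscestarbforsmoothb} gives $c_e^*(b_n)=c^*(b_n)$, and Proposition \ref{boundedness} then yields
\[
2\sqrt{\alpha}\;\leq\; c_e^*(b_n)\;\leq\; 2\sqrt{\alpha+\alpha^2L^2}
\qquad(n=1,2,3,\dots).
\]
Next, Proposition \ref{Convergenceofcstarbn} tells us that $c_e^*(\bar{b})=\lim_{n\to\infty}c_e^*(b_n)$. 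Passing to the limit in the displayed inequalities, the bounds are preserved, so
\[
2\sqrt{\alpha}\;\leq\; c_e^*(\bar{b})\;\leq\; 2\sqrt{\alpha+\alpha^2L^2},
\]
which is the desired conclusion. The proposition is proven.
\end{proof}

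I do not expect any genuine obstacle here: all the work has already been done in Propositions \ref{boundedness}, \ref{cstarbequalscestarbforsmoothb} and \ref{Convergenceofcstarbn}. The only point that deserves a word of care is that the limit $c_e^*(\bar{b})=\lim_n c_e^*(b_n)$ is independent of the chosen approximating sequence, but this is automatic once the limit is known to exist (and equals $c_e^*(\bar{b})$) for \emph{every} such sequence, which is exactly what Proposition \ref{Convergenceofcstarbn} asserts; alternatively, uniqueness of the weak\,$*$ limit $\bar{b}$ together with that proposition makes the value well defined.
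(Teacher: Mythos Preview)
Your proof is correct and follows essentially the same approach as the paper: pick an approximating sequence $\{b_n\}\subset\Lambda(\alpha)$, apply the smooth-case bounds of Proposition~\ref{boundedness}, and pass to the limit via Proposition~\ref{Convergenceofcstarbn}. The paper states this in one sentence without spelling out the intermediate step $c_e^*(b_n)=c^*(b_n)$, but the argument is the same.
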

\subsection{Maximizing $c_e^*(\bar{b})$}

In Subsection \ref{uniformboundofcestarbbar}, we have shown that
$c_e^*(\bar{b})$ is bounded. In this subsection, we consider the
variational problem
\[
 \underset{\bar{b}\in\overline{\Lambda}(\alpha)}{{\rm Maximize\quad}}c_e^*(\bar{b})
\] and show that the maximum is attained by $h(x)$ defined in \eqref{h}.
\begin{lem}\label{cestarhissupofcestarbbar}
\[
 c_e^*(h)=\max_{\bar{b}\in\overline{\Lambda}(\alpha)}c_e^*(\bar{b}).
\]
\end{lem}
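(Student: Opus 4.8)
The plan is to reduce everything to the single pointwise comparison
\[
\mu(\lambda,h)\le\mu(\lambda,\bar b)\qquad\text{for all }\bar b\in\overline{\Lambda}(\alpha)\text{ and all }\lambda>0 .
\]
Granting this, the lemma follows quickly. First, the variational identity of Proposition \ref{weinbergereigenvalue} persists for measures: by Definition \ref{Defofcestar} the admissible set of speeds is exactly $\{(\lambda^2-\mu(\lambda,\bar b))/\lambda:\lambda>0\}$ (note $\mu(\lambda,\bar b)<0$, by the extension to measures of Lemma \ref{Boundednessofmu}), and since $\lambda\mapsto\mu(\lambda,\bar b)$ is continuous (remark after Proposition \ref{convergenceofeigenvalue}) and bounded, $c_e^*(\bar b)=\inf_{\lambda>0}(\lambda^2-\mu(\lambda,\bar b))/\lambda$. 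The comparison then gives $(\lambda^2-\mu(\lambda,h))/\lambda\ge(\lambda^2-\mu(\lambda,\bar b))/\lambda$ for every $\lambda$, hence $c_e^*(h)\ge c_e^*(\bar b)$; and $h\in\overline{\Lambda}(\alpha)$ since $h$ is the weak-$*$ limit of the mollified delta trains $\alpha L\sum_k\rho_n\big(x-(k+\tfrac12)L\big)$ with $\rho_n$ a standard mollifier, so the supremum is attained at $h$.

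The heart of the matter is the concavity of $b\mapsto\mu(\lambda,b)$ on $\Lambda(\alpha)$ for fixed $\lambda>0$. I would prove midpoint concavity directly: given $b_1,b_2\in\Lambda(\alpha)$ with principal eigenfunctions $\psi_1,\psi_2\in E_L\cap C^2(\mathbb R)$, so that $(-\psi_i''+2\lambda\psi_i')/\psi_i=\mu(\lambda,b_i)+b_i$, put $\psi:=\sqrt{\psi_1\psi_2}\in E_L\cap C^2(\mathbb R)$ and $u_i:=\log\psi_i$, $u:=\tfrac12(u_1+u_2)$. Since $(-\psi''+2\lambda\psi')/\psi=-u''-(u')^2+2\lambda u'$ and $(u')^2=\big(\tfrac{u_1'+u_2'}{2}\big)^2\le\tfrac12\big((u_1')^2+(u_2')^2\big)$, a short computation gives, pointwise,
\[
\frac{-\psi''+2\lambda\psi'}{\psi}-\frac{b_1+b_2}{2}\ \ge\ \frac{\mu(\lambda,b_1)+\mu(\lambda,b_2)}{2}.
\]
Inserting $\psi$ into the max--min formula of Proposition \ref{maxminrepresentation} for $\mu(\lambda,\tfrac{b_1+b_2}{2})$ yields $\mu(\lambda,\tfrac{b_1+b_2}{2})\ge\tfrac12\big(\mu(\lambda,b_1)+\mu(\lambda,b_2)\big)$. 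Combined with the continuity under weak-$*$ limits from Proposition \ref{convergenceofeigenvalue} this upgrades to concavity of $b\mapsto\mu(\lambda,b)$ on $\Lambda(\alpha)$, and then — by density of $\Lambda(\alpha)$ in $\overline{\Lambda}(\alpha)$ and a diagonal argument — to a concave, weak-$*$-continuous functional $\bar b\mapsto\mu(\lambda,\bar b)$ on the compact convex set $\overline{\Lambda}(\alpha)$.

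To finish, I would exploit the structure of $\overline{\Lambda}(\alpha)$. The functional $\mu(\lambda,\cdot)$ is translation invariant: replacing $\bar b(x)$ by $\bar b(x-a)$ replaces the (unique, by Proposition \ref{principaleigenvalueisunique}) principal eigenfunction $\psi(x)$ by $\psi(x-a)$ and leaves the eigenvalue unchanged; hence every periodic single-atom measure $\alpha L\sum_{k\in\mathbb Z}\delta(x-x_0-kL)$, $x_0\in[0,L)$, has principal eigenvalue $\mu(\lambda,h)$. Now an arbitrary $\bar b\in\overline{\Lambda}(\alpha)$, restricted to one period, is a nonnegative measure of total mass $\alpha L$, hence a weak-$*$ limit of finite convex combinations $\sum_i p_i^{(n)}\,\alpha L\sum_k\delta(x-x_i^{(n)}-kL)$ of such single-atom measures (all of which lie in $\overline{\Lambda}(\alpha)$, as it is convex); by concavity $\mu(\lambda,\sum_i p_i^{(n)}(\cdots))\ge\sum_i p_i^{(n)}\mu(\lambda,h)=\mu(\lambda,h)$, and letting $n\to\infty$ with the weak-$*$ continuity of $\mu(\lambda,\cdot)$ gives $\mu(\lambda,\bar b)\ge\mu(\lambda,h)$. (Equivalently, apply Jensen's inequality to the concave $\mu(\lambda,\cdot)$ and the barycentric representation $\bar b=\int_{[0,L)}\big(\alpha L\sum_k\delta(\cdot-x_0-kL)\big)\,d\nu(x_0)$ with $\nu=\bar b/(\alpha L)$.) This is the required comparison.

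The main obstacle is clearly the concavity of $b\mapsto\mu(\lambda,b)$; the rest — the measure version of Proposition \ref{weinbergereigenvalue}, translation invariance, and the atomic approximation — is routine once the results already in the paper are in place. A minor point to be careful about is that the weak-$*$ continuity of $\mu(\lambda,\cdot)$ needed above is continuity on all of $\overline{\Lambda}(\alpha)$, not merely along sequences of smooth coefficients; this is obtained from Definition \ref{DefofLamdaBarAlpha} and Proposition \ref{convergenceofeigenvalue} by approximating each element of $\overline{\Lambda}(\alpha)$ by smooth ones and diagonalizing.
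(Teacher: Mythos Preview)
Your argument is correct and takes a genuinely different route from the paper. The paper derives Lemma \ref{cestarhissupofcestarbbar} as an immediate consequence of Lemma \ref{cehlargerthancb} and Proposition \ref{Convergenceofcstarbn}: for smooth $b$ it uses a convolution trick, setting $\tilde\psi:=b*\psi$ with $\psi$ the principal eigenfunction for $h$, and exploits the fact (established just before) that $\psi$ attains its maximum precisely at the location of the Dirac mass, so that $b*(h\psi)-b\,(b*\psi)>0$; plugging $\tilde\psi$ into the max--min formula of Proposition \ref{maxminrepresentation} then yields $c_e^*(b)<c_e^*(h)$, and the general case follows by approximation. Your approach instead isolates a structural fact: the concavity of $b\mapsto\mu(\lambda,b)$ (via the $\sqrt{\psi_1\psi_2}$ test function) together with the observation that every $\bar b\in\overline\Lambda(\alpha)$ is a mixture of translates of $h$, all of which share the same eigenvalue by translation invariance, gives $\mu(\lambda,\bar b)\ge\mu(\lambda,h)$ by Jensen. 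This is more conceptual and explains \emph{why} the Dirac comb is extremal --- the translates of $h$ are the extreme points of the convex set $\overline\Lambda(\alpha)$, and a concave functional is minimized there. The trade-off is that the paper's convolution computation gives the \emph{strict} inequality $c^*(b)<c_e^*(h)$ for smooth $b$ directly, which is used in Theorem \ref{cestarhlargerthancestarb}; your argument yields only $c_e^*(\bar b)\le c_e^*(h)$, so to recover strictness for smooth $b$ you would need to observe that the inequality $(u')^2\le\tfrac12\big((u_1')^2+(u_2')^2\big)$ in your concavity step is strict somewhere whenever $\psi_1\not\equiv c\psi_2$, and trace this through. For the lemma as stated, however, your proof is complete.
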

To prove the above result, we need some auxiliary lemmas.
\begin{lem} If there exist $\lambda>0,$ $c>0$ and $\psi\in E_L$ such that
\begin{equation}\label{5}
\ -\psi''+2\lambda \psi'-(\lambda^2-\lambda c+h(x) )\psi=0,
\end{equation}
then $\psi$ is a piecewise $C^1$ function and $\psi(L/2)\geq
\psi(x)$ for $x\in \mathbb R.$
\end{lem}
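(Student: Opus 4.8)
The plan is to exploit the fact that $h$ is supported on the lattice $\{x_k:=(k+\tfrac12)L\,:\,k\in\mathbb Z\}$, so that away from these points equation \eqref{5} is a constant-coefficient linear ODE. Set $\mu:=\lambda^2-\lambda c$, so that \eqref{5} reads $-L_{\lambda,h}\psi=\mu\psi$ in the weak sense of Definition \ref{definitionofprincipaleigenvalue}. First I would test this identity against $\varphi\in C_0^\infty(\mathbb R)$ supported in an open interval $I$ containing no point of the lattice: since $\psi$ is continuous and $h$ is atomic on the lattice, the term $\langle h\varphi,\psi\rangle$ vanishes, so $\psi$ is a distributional — hence, by the usual bootstrap in $H^1_{loc}$, a classical — solution of $-\psi''+2\lambda\psi'-\mu\psi=0$ on $I$. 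Solutions of this equation are entire exponential sums, so $\psi$ restricts to a $C^\infty$ function on each closed interval $[x_k,x_{k+1}]$; since $\psi\in E_L\subset C(\mathbb R)$ is globally continuous, this already gives the first assertion, namely that $\psi$ is piecewise $C^1$ (indeed piecewise $C^\infty$). Matching the atomic parts of the distributional identity $\psi''=2\lambda\psi'-\mu\psi-h\psi$ also shows $\psi'(x_k^+)-\psi'(x_k^-)=-\alpha L\,\psi(x_k)<0$, i.e.\ $\psi'$ drops at each lattice point, though this last fact is not needed below.

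The crucial point — which I expect to be the main obstacle — is that $\mu<0$. Since $\psi>0$ is an $L$-periodic solution of the eigenvalue equation with eigenvalue $\mu$, Definition \ref{definitionofprincipaleigenvalue} together with the uniqueness statement of Proposition \ref{principaleigenvalueisunique} gives $\mu=\mu(\lambda,h)$. Choosing $b_n\in\Lambda(\alpha)$ with $b_n\to h$ in the weak\,$*$ sense and combining Lemma \ref{Boundednessofmu} (which gives $\mu(\lambda,b_n)\le-\alpha$) with Proposition \ref{convergenceofeigenvalue} (which gives $\mu(\lambda,b_n)\to\mu(\lambda,h)$) then yields $\mu\le-\alpha<0$. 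Alternatively, and self-containedly, one may integrate \eqref{5} over one period (equivalently, pair $-L_{\lambda,h}\psi=\mu\psi$ with the constant function $1$ in the space of $L$-periodic functions): using $\int_{[0,L)}\psi''\,dx=\int_{[0,L)}\psi'\,dx=0$ by periodicity and $\int_{[0,L)}h\psi\,dx=\alpha L\,\psi(L/2)$, one obtains $\mu\int_{[0,L)}\psi\,dx=-\alpha L\,\psi(L/2)$, whence $\mu<0$ because $\psi>0$. Either route is short; everything else in the proof is routine.

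Finally, to prove $\psi(L/2)\ge\psi(x)$ for all $x\in\mathbb R$: by the $L$-periodicity of $\psi$ and of the lattice it suffices to bound $\psi$ on $[-L/2,L/2]$, whose interior $(-L/2,L/2)$ contains no lattice point, so $\psi$ is $C^\infty$ there and satisfies $\psi''=2\lambda\psi'-\mu\psi$. Suppose $\psi$ attained a local maximum at some interior point $x^\ast\in(-L/2,L/2)$. Then $\psi'(x^\ast)=0$ and $\psi''(x^\ast)\le0$, whereas the ODE gives $\psi''(x^\ast)=-\mu\,\psi(x^\ast)>0$ since $\mu<0$ and $\psi>0$, a contradiction. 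Hence the maximum of the continuous function $\psi$ over the compact interval $[-L/2,L/2]$ must be attained at an endpoint; by periodicity $\psi(-L/2)=\psi(L/2)$, so $\max_{[-L/2,L/2]}\psi=\psi(L/2)$, and therefore $\psi(L/2)=\max_{\mathbb R}\psi\ge\psi(x)$ for every $x$, which completes the proof.
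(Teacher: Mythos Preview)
Your proof is correct and follows essentially the same route as the paper: both show $\mu=\lambda^2-\lambda c<0$ by integrating the equation over one period (your second route is exactly the paper's argument), both observe that $\psi$ is a classical solution of the constant-coefficient ODE $-\psi''+2\lambda\psi'-\mu\psi=0$ on $(-L/2,L/2)$ and hence piecewise smooth, and both use a maximum-principle argument on that interval to force the maximum to the endpoints $\pm L/2$. Your version is somewhat more explicit (you spell out the second-derivative test and the jump condition), and you offer an alternative derivation of $\mu<0$ via Proposition~\ref{convergenceofeigenvalue} and Lemma~\ref{Boundednessofmu}, but the underlying strategy is the same.
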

\begin{proof}
First, we just integrate \eqref{5} from $0$ to $L$ to get \[ \
\int_{[0,L)}(\lambda^2-\lambda c+h(x) )\psi dx=0.
\]
This implies $\lambda^2-\lambda c<0.$

Since $h(x)=0$ for $x \in(-L/2,L/2),$
\[ \ -\psi''+2\lambda
\psi'-(\lambda^2-\lambda
c)\psi=0\,\,\;\;\,\,(-\frac{L}{2}<x<\frac{L}{2})
\] in the classical sense. Furthermore, $\psi$ is continuous up to $x=\pm
\frac{L}{2}$ since $E_L\subset C(\mathbb{R}).$ Consequently by the
classical maximum principle and the positivity of $\psi(x),$ along
with the negativity of $\lambda^2-\lambda c,$ the maximum of
$\psi$ must be attained at $x=\pm \frac{L}{2}$. Hence
\begin{equation} \label{psigetitsmaximumatL2}
\psi\big(\frac{L}{2}\big)>\psi(x),\quad \hbox{for
}x\in(-\frac{L}{2},\frac{L}{2}).\end{equation} {}From the
periodicity of $\psi(x),$ we get that \[ \
\psi\big(\frac{L}{2}\big)\geq \psi(x),\,\,\,x\in\mathbb R.
\]
Since $\psi$ can be expressed as
\[
\ \psi(x)=c_1e^{\nu_1x}+c_2e^{\nu_2x} \,\,\,\,\,\,\,\hbox{ in
}(-\frac{L}{2},\frac{L}{2})
\] for some constants $c_1,c_2,\nu_1,\nu_2,$
$\psi'(\frac{L}{2}-0)$ and $\psi'(\frac{L}{2}+0)$ both exist.
Consequently, $\psi$ is piecewise $C^1$ on $\mathbb{R}.$
\end{proof}

\begin{lem}\label{cehlargerthancb}
For any $b\in \Lambda(\alpha),$ $c_e^*(h)> c^*(b)$.
\end{lem}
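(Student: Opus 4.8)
The plan is to establish the two-sided comparison $c_e^*(h) > c^*(b)$ for an arbitrary $b \in \Lambda(\alpha)$ by exploiting the variational/eigenvalue characterizations available to us. First I would recall that by Proposition \ref{cstarbequalscestarbforsmoothb} we have $c^*(b) = c_e^*(b)$ for smooth $b$, so it suffices to prove $c_e^*(h) > c_e^*(b)$. Using Proposition \ref{weinbergereigenvalue}, namely $c_e^*(b) = \min_{\lambda>0}\frac{-\mu(\lambda,b)+\lambda^2}{\lambda}$, the inequality $c_e^*(h) > c_e^*(b)$ will follow once I show the strict eigenvalue inequality $\mu(\lambda,h) < \mu(\lambda,b)$ for every $\lambda > 0$ (or at least for the relevant minimizing $\lambda$): a smaller principal eigenvalue at every $\lambda$ forces a strictly larger value of the min over $\lambda$, provided one controls the minimizer. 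So the crux reduces to: \emph{among all $\bar b \in \overline{\Lambda}(\alpha)$, the concentrated measure $h$ minimizes the principal eigenvalue $\mu(\lambda,\cdot)$, and strictly so when compared with any smooth $b$.}

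To prove $\mu(\lambda,h) \le \mu(\lambda,\bar b)$, I would use the max-min formula of Proposition \ref{maxminrepresentation} together with the variational structure, but the cleanest route is via the identity obtained by dividing the eigenvalue equation by $\psi$ and integrating: equation \eqref{dividingbypsiandintegrate} gives $(\mu(\lambda,b)+\alpha)L + \int_{[0,L)}(\psi'/\psi)^2\,dx = 0$, and more generally, writing $w = \psi'/\psi = (\log\psi)'$, the principal eigenvalue satisfies $\mu(\lambda,\bar b) = -\alpha - \frac{1}{L}\int_{[0,L)} w^2\,dx$ where $w$ is $L$-periodic with $\int_{[0,L)} w\,dx = 0$ and solves the appropriate Riccati-type equation $-w' - w^2 + 2\lambda w = \bar b + \mu$ (interpreted weakly across the atoms of $\bar b$). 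The point is that concentrating the mass of $\bar b$ into a single Dirac mass forces $w$ to have the largest possible jump, hence the largest $\int w^2$, hence the most negative $\mu$. Making this rigorous is where I expect to use the approximation $b_n \to h$ from $\Lambda(\alpha)$ together with Proposition \ref{convergenceofeigenvalue} ($\mu(\lambda,b_n)\to\mu(\lambda,h)$), reducing everything to a comparison among smooth coefficients, where one can compare $\int_{[0,L)}(\psi'/\psi)^2$ directly. Alternatively — and perhaps this is what the author intends given the preceding lemma on $\psi(L/2)$ being the maximum — one solves \eqref{5} explicitly for $\bar b = h$: on $(-L/2,L/2)$ the eigenfunction is a combination of exponentials $e^{\nu_1 x}, e^{\nu_2 x}$ with $\nu_{1,2} = \lambda \pm \sqrt{\lambda^2 - \mu - (\lambda^2 - \lambda c)}$, the jump condition at $x = L/2$ coming from the Dirac mass reads $\psi'(L/2+0) - \psi'(L/2-0) = -\alpha L\,\psi(L/2)$, and together with periodicity this yields a transcendental equation for $\mu(\lambda,h)$ that can be compared explicitly against $\mu(\lambda,b)$.

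The strictness — that $c_e^*(h) > c^*(b)$ rather than just $\ge$ — is the delicate point, and I expect it to be the main obstacle. It should follow from the strict inequality $\mu(\lambda,h) < \mu(\lambda,b)$ whenever $b \not\equiv h$, which in the $w = \psi'/\psi$ picture amounts to a strict comparison of $\int_{[0,L)} w^2$: if $b$ is a genuine function (not a measure), then the corresponding $w$ is continuous and bounded, whereas for $h$ the function $w$ jumps, and a rearrangement/convexity argument shows the $L^2$ norm of the derivative-of-log is strictly increased by concentration. One must also ensure the minimizing $\lambda$ in Proposition \ref{weinbergereigenvalue} does not cause a degeneracy — but since $\mu(\lambda,h) < \mu(\lambda,b)$ holds for \emph{all} $\lambda > 0$ and both functions $\lambda \mapsto (-\mu(\lambda,\cdot)+\lambda^2)/\lambda$ attain their minima (by the uniform bounds $-\alpha \ge \mu \ge -\alpha-\alpha^2L^2$ and continuity in $\lambda$ from Proposition \ref{convergenceofeigenvalue}), the pointwise strict inequality passes to the minima. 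I would therefore organize the proof as: (i) reduce to $c_e^*(h) > c_e^*(b)$ via Proposition \ref{cstarbequalscestarbforsmoothb}; (ii) reduce to the pointwise claim $\mu(\lambda,h) < \mu(\lambda,b)$ for all $\lambda>0$ via Proposition \ref{weinbergereigenvalue} and compactness in $\lambda$; (iii) prove the eigenvalue inequality by the $w = (\log\psi)'$ representation, using the explicit exponential form of $\psi$ on the gap $(-L/2, L/2)$ and the jump condition at the atom to get strict monotonicity under concentration of mass.
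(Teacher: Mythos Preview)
Your reductions (i) and (ii) are sound, but step (iii) is where the proposal has a real gap. The identity $\mu(\lambda,\bar b)=-\alpha-L^{-1}\int_0^L w^2$ with $w=(\log\psi)'$ is correct, but $w$ is determined by $\bar b$ through the nonlinear Riccati equation $w'=-w^2+2\lambda w-\bar b-\mu(\lambda,\bar b)$, and there is no ready-made rearrangement or convexity inequality that compares $\int_0^L w_h^2$ to $\int_0^L w_b^2$ across two \emph{different} coefficients. The slogan ``concentration produces the largest jump in $w$, hence the largest $\int w^2$'' is a heuristic, not an argument: a continuous $w$ with large oscillations can have arbitrarily large $L^2$ norm without any jumps, so ``largest jump'' does not imply ``largest $\int w^2$''. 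Approximating $h$ by smooth $b_n$ only reproduces the same comparison problem among smooth coefficients, and the explicit transcendental relation for $\mu(\lambda,h)$ gives no handle on $\mu(\lambda,b)$ for a general $b$.

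The paper's method is quite different and uses the preceding lemma (that the eigenfunction $\psi$ for $h$ attains its maximum at $L/2$) not as a route to explicit formulas, but to manufacture a test function for the \emph{smooth} problem. Fix the $\lambda$ at which $\mu(\lambda,h)=\lambda^2-\lambda c_e^*(h)$ and let $\psi$ be the corresponding eigenfunction. Form the periodic convolution $\tilde\psi:=b*\psi\in E_L\cap C^2(\mathbb{R})$. Convolving the eigen-identity for $\psi$ with $b$ and inserting/subtracting $b(x)\tilde\psi(x)$ yields
\[
-\tilde\psi''+2\lambda\tilde\psi'-\big(\lambda^2-\lambda c_e^*(h)+b(x)\big)\tilde\psi
= b*(h\psi)-b\,(b*\psi).
\]
Since $h$ is a single Dirac of mass $\alpha L$ per period located exactly at $\max\psi$, and since $\int_0^L b=\alpha L$, the right-hand side equals $b\cdot\big[b*(\psi(L/2)-\psi)\big]$, which is a strictly positive continuous periodic function (this is precisely where $\psi(L/2)>\psi$ is used), hence bounded below by $\lambda\sigma$ for some $\sigma>0$. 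Plugging $\tilde\psi$ into the max--min formula of Proposition~\ref{maxminrepresentation} gives $\mu(\lambda,b)\ge\lambda^2-\lambda(c_e^*(h)-\sigma)$, whence $c_e^*(b)\le c_e^*(h)-\sigma<c_e^*(h)$ by Proposition~\ref{weinbergereigenvalue}, and Proposition~\ref{cstarbequalscestarbforsmoothb} finishes. Note in particular that the paper never establishes your pointwise claim $\mu(\lambda,h)<\mu(\lambda,b)$ for all $\lambda$; it works only at the single $\lambda$ realizing $c_e^*(h)$, and the convolution test function is the missing idea.
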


\begin{proof}
In what follows $c$ will stand for $c_e^*(h)$ for simplicity.
Then, by the definition of $c_e^*(h),$ there exists $\psi\in E_L$
such that
\[ \ -\psi''+2\lambda \psi'-(\lambda^2-\lambda c+h(x) )\psi=0
\] in the weak sense for some $\lambda>0.$ We define the
convolution $b\ast\psi$ for $L-$periodic functions by
\[
\ [b*\psi](x):=\int_{[0,L)}\psi(x-y)b(y)dy.
\] Then one can easily see that $\tilde{\psi}:=b\ast\psi$ belongs to $E_L\cap C^2(\mathbb{R})$ since $b$ is $C^1$ and $\psi$ is piecewise $C^1.$
Consequently,

\[
\begin{split}
\ 0&=b\ast\big(-\psi''+2\lambda\psi'-(\lambda^2-\lambda c+b(x))\psi\big) \\
&=-b\ast\psi''+2\lambda
b\ast\psi'-b\ast((\lambda^2-\lambda c+h(x) )\psi) \\
&=-\tilde{\psi}''+2\lambda \tilde{\psi}'-(\lambda^2-\lambda
c+b(x) )\tilde{\psi}-(b\ast(h\psi)-b(b\ast\psi))\\
 &=-\tilde{\psi}''+2\lambda
\tilde{\psi}'-(\lambda^2-\lambda c+b(x) )\tilde{\psi}-\big(\alpha
L\psi(\frac{L}{2})b-b(b\ast \psi)\big).
\end{split}
\]
Note that
\[
\begin{split}
 \alpha L\psi(\frac{L}{2})b(x)-b(x)[b\ast \psi](x) =&b(x)(\alpha
L\psi(\frac{L}{2})-[b\ast\psi](x))\\
=&b(x)\big([b\ast\psi(\frac{L}{2})](x)-[b\ast\psi](x)\big)\\=&b(x)\big[b\ast\big(\psi(\frac{L}{2})-\psi(x)\big)\big](x)\\
 >&0
\end{split}
\] by \eqref{psigetitsmaximumatL2}. Furthermore, since the
left-hand side of the above inequality is continuous and periodic in $x,$ we
have $\alpha L\psi(\frac{L}{2})b(x)-b(x)b\ast
\psi\geq\lambda\sigma$ for some $\sigma>0.$ It follows that
\[
\
-\tilde{\psi}''+2\lambda \tilde{\psi}'-(\lambda^2-\lambda
(c-\sigma)+b(x))\tilde{\psi}>0,
 \] hence, by Proposition \ref{maxminrepresentation},
\[
\
\lambda^2-\lambda(c-\sigma)\leq\frac{-\tilde{\psi}''+2\lambda\tilde{\psi}'-b(x)\tilde{\psi}}{\tilde{\psi}}\leq
\mu(\lambda,b).
\] Combining this and Proposition \ref{weinbergereigenvalue}, we
obtain
\[
\ c_e^*(b)\leq c-\sigma < c:=c_e^*(h).
\] Moreover, by Proposition
\ref{cstarbequalscestarbforsmoothb},
\[
\ c^*(b)<c_e^*(h)
\] for any $b\in\Lambda(\alpha).$
\end{proof}

\begin{proof}[Proof of Lemma \ref{cestarhissupofcestarbbar}]
The proof is immediate from Lemma \ref{cehlargerthancb} and
Proposition \ref{Convergenceofcstarbn}.
\end{proof}

\section{Pulsating travelling waves}\label{pulsating travelling waves}

In this section we will prove our main results: Theorems
\ref{cstarbequalcestarb}, \ref{cestarhlargerthancestarb},
\ref{Themforspreadingspeed}.

In Subsection 5.1, we show that the minimal speeds $c^*(\bar{b})$
and $\tilde{c}^*(\bar{b})$ exist and that they coincide with the
spreading speeds $c^{**}(\bar{b})$ and $\tilde{c}^{**}(\bar{b}),$
respectively. In Subsection 5.2, we prove
$c^*(\bar{b})=c_e^*(\bar{b})$ and
$\tilde{c}^*(\bar{b})=\tilde{c}_e^*(\bar{b})$ thereby completing
the proof of Theorem \ref{cstarbequalcestarb}. In Subsection 5.3,
we complete the proof of Theorems \ref{cestarhlargerthancestarb}
and \ref{Themforspreadingspeed}. Without loss of generality, in
this section, let $L=1.$

\subsection{The existence of the minimal speed and spreading speed}
Our strategy here is to use the general results of Weinberger
\cite{s8} and Liang and Zhao \cite{s7}. For this purpose, we need
to consider the solution semiflow of \eqref{Mainequation} on the
space $C(\mathbb{R};[0,1])$  with respect to the local uniform
topology, where the convergence $u_n\rightarrow u$ means that
$u_n(x)\rightarrow u(x)$ uniformly on any bounded interval. First,
we consider the linear space $BC(\mathbb{R})=\{u\,|\,u\in
C(\mathbb R)\cap L^\infty(\mathbb R)\}$ which contains
$C(\mathbb{R};[0,1])$. Equip $BC(\mathbb{R})$ with the topology of
locally uniform convergence. It is easy to show that this linear
topological space $BC(\mathbb{R})$ has the following properties.

\begin{pro}\label{equivalent norm} For each $m\in\mathbb Z$ and $n\in\mathbb
N,$ define $$\|u\|^n_m=\sum\limits_{i\in
\mathbb{Z}}\frac{\max\limits_{z\in[m+i,m+i+n]}|u(z)|}{2^{|i|}}.$$
Then each $\|\cdot\|^n_m$ is a norm on $BC(\mathbb{R})$ and for
any $M>0$, $\|\cdot\|^n_m$ defines a topology equivalent to the
local uniform topology on $C(\mathbb{R};[-M,M])$.

Furthermore, for any $u\in BC(\mathbb R),$

\fbox{Wrong begins}$$\|u\|^1_m\leq \|u\|^{n}_m \leq n\|u\|^{1}_m
,\,\,\,\,\,\forall m\in \mathbb{Z}, n\in \mathbb{N}$$ and
$$2^{-|m|}\|u\|^n_0\leq \|u\|^{n}_m \leq
2^{|m|}\|u\|^{n}_0,\,\,\,\forall m\in \mathbb{Z},n\in
\mathbb{N}.$$\fbox{Wrong ends}

\fbox{Modification begins}
$$
\|u\|_{m+j}^1\leq\|u\|_m^n\leq\sum_{i=1}^n\|u\|_{m+i}^1
\,\,\,\forall \,m\in \mathbb{Z},n\in \mathbb{N},j=0,\ldots,n-1\,$$
and
$$2^{-|m|}\|u\|^n_0\leq \|u\|^{n}_m \leq
2^{|m|}\|u\|^{n}_0,\,\,\,\forall\, m\in \mathbb{Z},n\in
\mathbb{N}.$$

\fbox{Modification ends}
\end{pro}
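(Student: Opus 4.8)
The plan is to check the norm axioms directly, deduce the four displayed inequalities from one elementary identity for the maxima appearing in the series, and then read off the topological equivalence from those inequalities. First I would verify that $\|u\|_m^n$ is finite and is a norm. Since $\max_{z\in[m+i,m+i+n]}|u(z)|\le\|u\|_{L^\infty(\mathbb{R})}$ for $u\in BC(\mathbb{R})$ and $\sum_{i\in\mathbb{Z}}2^{-|i|}=3$, we get $\|u\|_m^n\le 3\|u\|_{L^\infty(\mathbb{R})}<\infty$. Positive homogeneity is immediate, and the triangle inequality follows termwise, from the triangle inequality for the functional $u\mapsto\max_{z\in[m+i,m+i+n]}|u(z)|$ together with that of $\ell^1$. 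For positive definiteness, $\|u\|_m^n=0$ forces $u\equiv0$ on every interval $[m+i,m+i+n]$, and since $\bigcup_{i\in\mathbb{Z}}[m+i,m+i+n]=\mathbb{R}$ this gives $u\equiv0$.

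Next I would prove the two chains of inequalities. Set $a_k:=\max_{z\in[m+k,m+k+1]}|u(z)|$; since $[m+i,m+i+n]$ is the union of the $n$ unit intervals with left endpoints $m+i,m+i+1,\dots,m+i+n-1$, one has $\max_{z\in[m+i,m+i+n]}|u(z)|=\max\{a_i,\dots,a_{i+n-1}\}$, so $\|u\|_m^n=\sum_{i\in\mathbb{Z}}2^{-|i|}\max\{a_i,\dots,a_{i+n-1}\}$. For $0\le j\le n-1$, the bound $\max\{a_i,\dots,a_{i+n-1}\}\ge a_{i+j}$ and the substitution $k=i+j$ give $\|u\|_m^n\ge\sum_{k\in\mathbb{Z}}2^{-|k-j|}a_k=\|u\|_{m+j}^1$; the upper bound $\|u\|_m^n\le\sum_{l=0}^{n-1}\|u\|_{m+l}^1$ (matching the claimed sum up to the obvious index shift) follows from $\max\{a_i,\dots,a_{i+n-1}\}\le a_i+\cdots+a_{i+n-1}$ by interchanging the two absolutely convergent sums. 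For the shift estimates, reindexing by $k=m+i$ gives $\|u\|_m^n=\sum_{k\in\mathbb{Z}}2^{-|k-m|}\max_{z\in[k,k+n]}|u(z)|$, and the elementary inequalities $|k|-|m|\le|k-m|\le|k|+|m|$ yield $2^{-|m|}\|u\|_0^n\le\|u\|_m^n\le 2^{|m|}\|u\|_0^n$.

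Finally, for the equivalence of topologies on $C(\mathbb{R};[-M,M])$: if $\|u_\nu-u\|_m^n\to0$, then any bounded interval $J\subset\mathbb{R}$ is covered by finitely many $[m+i,m+i+n]$, and on each of them $\max_{J}|u_\nu-u|\le 2^{|i|}\|u_\nu-u\|_m^n\to0$, so $u_\nu\to u$ locally uniformly; conversely, if $u_\nu\to u$ locally uniformly with all functions taking values in $[-M,M]$, then splitting $\|u_\nu-u\|_m^n$ into the part with $|i|\le N$ and the tail with $|i|>N$, the tail is at most $2M\sum_{|i|>N}2^{-|i|}$, which is $<\varepsilon/2$ once $N$ is large, while the finite part tends to $0$; hence $\|u_\nu-u\|_m^n\to0$. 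Since both topologies are metrizable on $C(\mathbb{R};[-M,M])$, this coincidence of convergent sequences suffices.

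The only step that genuinely needs care is this last one, and the point to keep in mind is that the locally uniform topology on all of $BC(\mathbb{R})$ is \emph{not} induced by any norm, so the equivalence is necessarily stated only after restricting to the bounded set $C(\mathbb{R};[-M,M])$; it is exactly the uniform bound $2M$ that both makes the tails of the defining series small and makes the topology metrizable. All the series manipulations in the other steps are routine once the identity $\max_{z\in[m+i,m+i+n]}|u(z)|=\max\{a_i,\dots,a_{i+n-1}\}$ is recorded.
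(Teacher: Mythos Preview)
Your proof is correct, and in fact the paper does not give any proof of this proposition at all: it prefaces the statement with ``It is easy to show that\ldots'' and then moves on. Your argument supplies exactly the routine details the authors omit, and the identity $\max_{z\in[m+i,m+i+n]}|u(z)|=\max\{a_i,\dots,a_{i+n-1}\}$ is indeed the one observation that makes all four inequalities transparent.

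One small point worth recording: the upper bound as printed in the paper, $\|u\|_m^n\le\sum_{i=1}^n\|u\|_{m+i}^1$, is off by one (for $n=1$ it would force $\|u\|_m^1\le\|u\|_{m+1}^1$ for all $m$, which is false in general). Your derivation gives the correct range $\sum_{l=0}^{n-1}\|u\|_{m+l}^1$, and you were right to flag this as an index shift rather than silently reproduce the typo.
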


\begin{pro}
$C(\mathbb{R};[0,1])$ is a bounded closed subset of
$BC(\mathbb{R})$. Moreover, $C(\mathbb{R};[0,1])$ is complete.
\end{pro}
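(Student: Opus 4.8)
The plan is to dispatch the three assertions separately; each is an elementary consequence of Proposition \ref{equivalent norm} together with the fact that a member of $C(\mathbb{R};[0,1])$ is continuous and bounded (so $C(\mathbb{R};[0,1])\subset BC(\mathbb{R})$ to begin with). For \emph{boundedness}: for the locally uniform topology a subset $S\subset BC(\mathbb{R})$ is bounded iff it is uniformly bounded on every compact interval, and since $|u(z)|\le 1$ for all $z$ whenever $u\in C(\mathbb{R};[0,1])$, this set is uniformly bounded by $1$ on all of $\mathbb{R}$, hence bounded; concretely this also gives $\|u\|^n_m=\sum_{i\in\mathbb{Z}}2^{-|i|}\max_{z\in[m+i,m+i+n]}|u(z)|\le\sum_{i\in\mathbb{Z}}2^{-|i|}=3$ for every $m\in\mathbb{Z}$, $n\in\mathbb{N}$.

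For \emph{closedness} I would show that the complement is open, which keeps the argument purely topological. For each $x_0\in\mathbb{R}$ the evaluation map $v\mapsto v(x_0)$ is continuous on $BC(\mathbb{R})$ for the locally uniform topology, since $|v(x_0)-w(x_0)|\le\sup_{|z-x_0|\le 1}|v(z)-w(z)|$. A continuous function on $\mathbb{R}$ fails to take values in $[0,1]$ exactly when it takes a value outside $[0,1]$ at some point, so
\[
BC(\mathbb{R})\setminus C(\mathbb{R};[0,1])=\bigcup_{x_0\in\mathbb{R}}\{v\in BC(\mathbb{R})\,|\,v(x_0)\notin[0,1]\},
\]
a union of preimages of the open set $\mathbb{R}\setminus[0,1]$ under continuous maps, hence open.

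For \emph{completeness} I would work directly with locally uniform convergence, noting that on the bounded set $C(\mathbb{R};[-1,1])\supset C(\mathbb{R};[0,1])$ this topology is metrizable by Proposition \ref{equivalent norm}, so it suffices to treat Cauchy sequences. A Cauchy sequence $\{u_k\}\subset C(\mathbb{R};[0,1])$ is uniformly Cauchy on every compact interval — for instance because $\max_{z\in[i,i+1]}|u_k(z)-u_l(z)|\le 2^{|i|}\|u_k-u_l\|^1_0$ — hence converges uniformly on compacta to a continuous limit $u$. As $[0,1]$ is closed, $u$ is $[0,1]$-valued, so $u$ is bounded and $u\in C(\mathbb{R};[0,1])$, and $u_k\to u$ locally uniformly; if one prefers to phrase the last step with the norm, the convergence $\|u_k-u\|^1_0\to 0$ follows from an $\varepsilon$-argument together with dominated convergence, the $i$-th summand being bounded by $2\cdot 2^{-|i|}$.

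I do not expect a genuine obstacle here: every step is a standard fact about uniform convergence on compact sets. The only point worth flagging is the interplay between the ambient space $BC(\mathbb{R})$ and the norms $\|\cdot\|^n_m$ — a single such norm reproduces the locally uniform topology only on \emph{bounded} subsets of $BC(\mathbb{R})$ — which is exactly why closedness is cleanest stated topologically via continuity of evaluations, and why ``complete'' should be read as completeness of $C(\mathbb{R};[0,1])$ in the induced uniformity, which is metrizable precisely because $C(\mathbb{R};[0,1])$ is bounded, so the choice of $\|\cdot\|^n_m$ is immaterial.
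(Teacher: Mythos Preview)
Your argument is correct. The paper itself states this proposition without proof, treating it as an elementary fact about the locally uniform topology; your write-up supplies exactly the routine verifications (uniform bound $\le 3$ for each $\|\cdot\|^n_m$, openness of the complement via continuity of point evaluations, and completeness via uniform-on-compacta limits of Cauchy sequences) that the authors omit.
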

In Section 3, we have shown that the mild solutions of
\eqref{equationinsection3} depend on the initial data continuously
in $L^\infty(\mathbb{R}).$  In the following proposition, we will
show that the continuous dependence also holds with respect to the
local uniform topology.
\begin{pro}\label{lcontinuous}The mild solutions of
\eqref{equationinsection3} depend on the initial data continuously
in the space $C(\mathbb{R};[0,1])$ . Precisely, for any
$\epsilon>0$ and $T>0$, there is some $\eta>0$ such that for any
two solutions $u,v$ of (\ref{equationinsection3}) with initial
data $u_0,$ $v_0$, if $\|u_0-v_0\|_0^1\leq \eta$, then
$\|u(\cdot,t)-v(\cdot,t)\|_0^1\leq \epsilon$ for $t\in [0,T]$.

\end{pro}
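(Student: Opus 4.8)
The plan is to reuse the integral (Duhamel) representation of mild solutions together with the Gronwall-type estimate already established in the proof of Theorem \ref{wlposedness}, but now carried out in the local uniform topology rather than in $L^\infty(\mathbb R)$. Set $w=u-v$, where $u,v$ are the mild solutions with initial data $u_0,v_0\in C(\mathbb R;[0,1])$. As in \eqref{integral}, $w$ satisfies
\[
w(x,t)=\int_{\mathbb R} G(x-y,t)w_0(y)\,dy+\int_0^t\!\!\int_{\mathbb R}G(x-y,t-s)\,m(y,s)\,w(y,s)\,dy\,ds,
\]
with $m=\bar b(x)(1-u-v)$ a measure-valued function of $t$ satisfying $\|m(\cdot,s)\|\le M$ on each period for some constant $M$ depending only on $\bar b$ and the uniform bound $0\le u,v\le 1$.

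The key step is to localize the Gronwall argument. First I would control the contribution of the heat semigroup to the weighted norm: because $G(\cdot,t)$ is a probability kernel that spreads at rate $\sqrt t$, one checks that for $t\in[0,T]$ there is a constant $K=K(T)$ with $\|\int_{\mathbb R}G(\cdot-y,t)\phi(y)\,dy\|_m^1\le K\sum_{j}\|\phi\|_{m+j}^1$-type bounds, or more simply that the local uniform norm of the free evolution on the interval $[m,m+1]$ is dominated by the local uniform norm of $\phi$ on a slightly larger interval plus an exponentially decaying tail in the index $i$; the geometric weights $2^{-|i|}$ in $\|\cdot\|_m^1$ absorb these tails. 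For the nonlinear (Duhamel) term I would use, exactly as in the proof of Lemma \ref{weaksolismildsol}, that $\bar b$ acts as a uniformly bounded functional on each period, so that
\[
\Big|\int_{\mathbb R}G(x-y,t-s)\,m(y,s)\,w(y,s)\,dy\Big|\le M\max_{y}\,G(x-y,t-s)\,|w(y,s)|\le \frac{C}{\sqrt{t-s}}\,\|w(\cdot,s)\|_{\lfloor x\rfloor}^{1},
\]
after summing over periods and using the rapid decay of $G$; the crucial point is that only the local uniform norm of $w(\cdot,s)$ near $x$ (with exponentially small weight on far-away periods) enters. Defining $\rho(t)=\sup_{m}2^{-|m|}\|w(\cdot,t)\|_m^1$ (a quantity comparable to $\|w(\cdot,t)\|_0^1$ by Proposition \ref{equivalent norm}), these estimates yield an inequality of the form
\[
\rho(t)\le K\rho(0)+C\int_0^t\frac{\rho(s)}{\sqrt{t-s}}\,ds,
\]
and Lemma 7.7 of \cite{s3} (used already to obtain \eqref{est-rho}) gives $\rho(t)\le e^{C^2t/4}(1+C\sqrt t/\sqrt{\pi})K\rho(0)$ for $t\in[0,T]$. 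Choosing $\eta$ small enough in terms of this bound and $\epsilon$ finishes the proof.

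The main obstacle I anticipate is the bookkeeping that makes the Gronwall step close in the \emph{weighted} norm: one must verify that both the heat flow and the singular time-kernel $1/\sqrt{t-s}$ interact with the weights $2^{-|i|}$ so that the "interval of influence" near a point $x$ stays under control uniformly in $t\in[0,T]$ and that the constant $C$ does not depend on the index $m$. This is where the two norm comparisons in Proposition \ref{equivalent norm} — the one relating $\|\cdot\|_m^n$ to a finite sum of $\|\cdot\|_{m+j}^1$, and the one relating $\|\cdot\|_m^n$ to $2^{|m|}\|\cdot\|_0^n$ — do the heavy lifting: they let us pass freely between a genuinely local estimate and a single global quantity $\rho(t)$. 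Once that passage is set up, the rest is the same fixed-time Gronwall estimate as in Section 3.
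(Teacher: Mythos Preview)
Your plan is correct and follows essentially the same route as the paper: split the Duhamel representation into the free-evolution term and the nonlinear term, bound the latter period-by-period via Lemma~\ref{lem:A} so that the Gaussian tail absorbs the exponential weight $2^{|i|}$ coming from Proposition~\ref{equivalent norm}, and close with the singular Gronwall inequality from \cite{s3}. The paper works directly with $\|w(\cdot,t)\|_0^1$ (your quantity $\rho(t)=\sup_m 2^{-|m|}\|w(\cdot,t)\|_m^1$ actually equals $\|w(\cdot,t)\|_0^1$ by the second comparison in Proposition~\ref{equivalent norm}), and treats the free-evolution term as ``$\|I_1(t,\cdot)\|_0^1<\epsilon$ whenever $\|w_0\|_0^1<\eta$'' rather than via a Lipschitz constant $K$, but otherwise the bookkeeping you anticipate---shifting indices to identify $\sum_k 2^{-|k|}\max_{[k-i-1,k-i+1]}|w|$ as $\|w\|_{-i-1}^2$ and then invoking the norm comparisons---is exactly what the paper does.
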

\begin{proof}Let $w=u-v$ with $w_0=u_0-v_0$, then
\begin{equation}\label{integral}
\begin{split}
&w(x,t)=\int_{\mathbb R} G(x-y,t)w_0(y)dy\\
&\hspace{60pt}+\int_0^t\int_{\mathbb
R}G(y,t-s)\bar{b}(x-y)m(x-y,s)w(x-y,s)\,dy\,ds,
\end{split}
\end{equation}
where $m(x,s)=1-u(x,s)-v(x,s)$. By comparison principle, $0\leq
u_0(x),v_0(x)\leq 1,\,\forall x\in \mathbb{R}$ implies that $0\leq
u(x,t),v(x,t)\leq 1,\forall x\in \mathbb{R},t>0$, and then there
$|m(x,s)|\leq1$. Let
$$I_1(t,x)=\int_{\mathbb R} G(x-y,t)w_0(y)dy$$ and
$$I_2(t,x)=\int_0^t\int_{\mathbb
R}G(y,t-s)\bar{b}(x-y)(1-u(x-y,s)-v(x-y,s))w(x-y,s)\,dy\,ds.$$
First, we consider $I_1$. For any $\epsilon>0$ and $T>0$ there is
some $\eta>0$, such that if
$\|w_0\|_0^1=\|I_1(0,\cdot)\|_0^1<\eta$, then
$\|I_1(t,\cdot)\|_0^1<\epsilon$ for any $t\in [0, T]$. Then we
consider $I_2$. By Lemma \ref{lem:A},

\[ \begin{split}&\Big|\int_{\mathbb
R}G(y,t-s)\bar{b}(x-y)m(x-y,s)w(x-y,s)\,dy\,\Big| \\
\leq & M \alpha L\sum\limits_{i\in \mathbb{Z}}\max\limits_{y\in
[i,i+1]}G(y,t-s)\max\limits_{y\in [i,i+1]}|w(x-y,s)|\end{split}\]
where $M$ is a constant such that $\Vert m(\cdot,t)\Vert\leq M$
for $t\geq 0$..

Hence, we have\[\begin{split}\max\limits_{x\in
[k,k+1]}|I_2(t,x)|&\leq M\alpha L\int_0^t\max\limits_{x\in
[k,k+1]}\sum\limits_{i\in \mathbb{Z}}\max\limits_{y\in
[i,i+1]}G(y,t-s)\max\limits_{y\in [i,i+1]}|w(x-y,s)|ds\\ &\leq
M\alpha L\int_0^t\sum\limits_{i\in \mathbb{Z}}\max\limits_{y\in
[i,i+1]}G(y,t-s)\max\limits_{z\in [k-i-1,k-i+1]}|w(z,s)|ds.
\end{split}
\]
This implies that \[\begin{split}\|I_2(t,\cdot)\|_0^1&\leq M\alpha
L\int_0^t\sum\limits_{i\in \mathbb{Z}}\max\limits_{y\in
[i,i+1]}(G(y,t-s)\sum\limits_{k\in
\mathbb{Z}}2^{-|k|}\max\limits_{z\in [k-i-1,k-i+1]}|w(z,s)|)ds\\ &
=M\alpha L\int_0^t\sum\limits_{i\in \mathbb{Z}}\max\limits_{y\in
[i,i+1]}G(y,t-s)\|w(\cdot,s)\|_{-i-1}^2ds\\ & \leq M\alpha
L\int_0^t\sum\limits_{i\in \mathbb{Z}}\max\limits_{y\in
[i,i+1]}G(y,t-s)2^{|i+2|}\|w(\cdot,s)\|_{0}^1\\ & \leq
\int_0^t\frac{C'}{\sqrt{t-s}}\big\|w(\cdot,s)\big\|_{0}^1ds
\end{split}
\]
for some positive constant $C'.$ Then we have
$$\|w(\cdot,t)\|_{0}^1\leq \epsilon
+\int_0^t\frac{C'}{\sqrt{t-s}}\big\|w(\cdot,s)\big\|_{0}^1ds$$
provided $\|w_0\|_{0}^1\leq \eta$. It follows that
\[
\|w(\cdot,t)\|_{0}^1\leq e^{M^2t/4}\Big(1+\frac{M}{\sqrt{4\pi}}
\int_0^t \frac{e^{-M^2s/4}}{\sqrt{s}}\,ds\Big)\epsilon
=O\Big(e^{M^2t/4}\big(1+\sqrt{t}\,\big)\epsilon\Big)
\]
from Lemma 7.7 of Alfaro, Hilhorst, Matano \cite{s3}. The proof is
completed.
\end{proof}
We have shown in Section 3 that for any
initial function $u_0\in C(\mathbb{R};[0,1])$, the mild solution
$u(x,t,u_0)$ of \eqref{Mainequation} exists for any $t>0$ and that
$u(\cdot,t,u_0)\in C(\mathbb{R};[0,1])$. Now define an operator
$Q:C(\mathbb{R};[0,1])\times \mathbb R^+\rightarrow
C(\mathbb{R};[0,1])$ by
\[
\ Q_t(u_0)(x)=Q(u_0,t)(x)=u(x,t,u_0),
\]
where $u(x,t,u_0)$ is the solution of \eqref{Mainequation} with
initial data $u_0(x).$

In the following proposition, we will show that $Q$ is a semiflow
on the space $C(\mathbb{R};[0,1])$ with respect to the local
uniform topology, in other words, with respect to the norm
$\|\cdot\|_0^1$.
\begin{pro}\label{QIsSemiFlow}
$Q$ is a semiflow generated by the solution of equation
\eqref{Mainequation} in $C(\mathbb{R};[0,1])$ (with respect to
local uniform topology) in the following sense: \begin{enumerate}
\item[{\rm(a)}] $Q_0(u_0)=u_0$, \item[{\rm(b)}]
$Q_{t_1+t_2}=Q_{t_1}\circ Q_{t_2}$, \item[{\rm(c)}] $Q$ is
continuous in $(u_0,t)$.
\end{enumerate}
Moreover, for any $t>0,$ $\{Q_t(u_0)\,|\,u_0\in
C(\mathbb{R};[0,1])\}$ is precompact in $C(\mathbb{R};[0,1])$.
\end{pro}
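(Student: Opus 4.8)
The plan is to verify the semiflow axioms (a)--(c) and the precompactness claim one by one, relying only on the well-posedness theory of Section 3. Axiom (a) is immediate from the definition of a mild solution: $Q_0(u_0)(x)=u(x,0,u_0)=\lim_{t\searrow 0}u(x,t,u_0)=u_0(x)$. Axiom (b) is a consequence of uniqueness: by Remark \ref{RemarkForMildSolution}, if $u(\cdot,\cdot,u_0)$ is the mild solution with initial datum $u_0$, then $v(x,t):=u(x,t+t_2,u_0)$ is a mild solution of \eqref{equationinsection3} whose initial datum is $u(\cdot,t_2,u_0)=Q_{t_2}(u_0)$; since the mild solution is unique (Theorem \ref{wlposedness}), $v(x,t)=u(x,t,Q_{t_2}(u_0))=Q_t(Q_{t_2}(u_0))(x)$, and evaluating at $t=t_1$ gives (b).

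For the joint continuity (c), fix $(u_0,t_0)$ and, for $(v_0,t)$ nearby, estimate
\[
\|Q_t(v_0)-Q_{t_0}(u_0)\|_0^1\le\|Q_t(v_0)-Q_t(u_0)\|_0^1+\|Q_t(u_0)-Q_{t_0}(u_0)\|_0^1.
\]
The first term tends to $0$ uniformly for $t$ in a bounded interval as $\|v_0-u_0\|_0^1\to 0$, by Proposition \ref{lcontinuous}. It then remains to show that $t\mapsto Q_t(u_0)$ is continuous from $[0,\infty)$ into $(C(\mathbb{R};[0,1]),\|\cdot\|_0^1)$, equivalently (Proposition \ref{equivalent norm}) that $Q_t(u_0)\to Q_{t_0}(u_0)$ locally uniformly in $x$ as $t\to t_0$. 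From the mild representation
\[
Q_t(u_0)(x)=\int_{\mathbb R}G(x-y,t)u_0(y)\,dy+\int_0^t\!\!\int_{\mathbb R}G(x-y,t-s)\,\bar b(y)\,u(y,s)\big(1-u(y,s)\big)\,dy\,ds,
\]
the first summand is continuous in $t\ge 0$, locally uniformly in $x$ (uniform continuity of $u_0$ on compacts together with the Gaussian tail bound for the far field). For the Duhamel summand, Lemma \ref{lem:A} together with $0\le u\le 1$ and the lattice estimate $\sum_{k\in\mathbb Z}\max_{0\le y\le L}G(x-y-kL,\tau)\le C\tau^{-1/2}$ (uniform in $x$) bounds the inner integral by $C'(t-s)^{-1/2}$; splitting $\int_0^t-\int_0^{t_0}$ into the piece over $(\min(t,t_0),\max(t,t_0))$, of size $O(|t-t_0|^{1/2})$, and the piece $\int_0^{\min(t,t_0)}[G(x-y,t-s)-G(x-y,t_0-s)](\cdots)\,dy\,ds$, which vanishes by dominated convergence with the integrable majorant $C'(\min(t,t_0)-s)^{-1/2}$, yields continuity in $t$ uniformly in $x$. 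This proves (c).

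Finally, fix $t>0$. Every $Q_t(u_0)$ takes values in $[0,1]$, so the family $\{Q_t(u_0):u_0\in C(\mathbb{R};[0,1])\}$ is uniformly bounded, and it is uniformly equicontinuous in $x$ by Proposition \ref{compact}. By the Arzela--Ascoli theorem on each interval $[-M,M]$ and a diagonal extraction, every sequence in this family has a subsequence converging locally uniformly, i.e. in $\|\cdot\|_0^1$, and the limit again takes values in $[0,1]$ since $C(\mathbb{R};[0,1])$ is closed in $BC(\mathbb R)$; hence $\{Q_t(u_0)\}_{u_0}$ is precompact in $C(\mathbb{R};[0,1])$. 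I expect the only genuinely delicate point to be the continuity of $t\mapsto Q_t(u_0)$ at $t=0$ in the local uniform topology, since Section 3 records only the pointwise limit $u(x,t)\to u_0(x)$; this is resolved by the $O(\sqrt{t})$ bound on the Duhamel term, uniform in $x$, and the locally uniform convergence of the Gaussian averages of $u_0$.
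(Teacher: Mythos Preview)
Your proof is correct and follows essentially the same route as the paper: axioms (a) and (b) are dismissed as immediate (the paper calls them ``obvious''), joint continuity (c) is obtained from the uniform equicontinuity in the initial data provided by Proposition~\ref{lcontinuous} together with continuity in $t$ for fixed $u_0$ via the mild-solution representation (the paper simply cites Lemma~\ref{weaksolismildsol} for the latter, whereas you spell out the Gaussian/Duhamel estimates), and precompactness comes from Proposition~\ref{compact} plus Arzel\`a--Ascoli. Your explicit treatment of the $t\downarrow 0$ case and the dominated-convergence splitting of the Duhamel term are a welcome addition of detail over the paper's terse reference.
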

\begin{proof}
The properties (a),(b) are obvious. To prove property (c), we only
need to show that \begin{enumerate} \item [(1)]For any given
$u_0$, $Q(u_0,t)$ is continuous in $t$, \item[(2)] For any given
$T>0$, the family of maps $Q_t: C(\mathbb R;[0,1])\to C(\mathbb
R;[0,1]),$ $0\leq t\leq T,$ is uniformly equicontinuous.
\end{enumerate}
(1) comes from Lemma \ref{weaksolismildsol}. In Proposition
\ref{lcontinuous}, we have shown that property (2) holds. This
completes the proof that $Q$ is a semiflow.

The compactness of $\{Q_t(u_0)\,|\,u_0\in C(\mathbb{R};[0,1])\}$
is equivalent to the uniform equicontinuity of the function
family. So it can be obtained from Proposition \ref{compact}
directly. The proof is complete.

\end{proof}

In the following lemma, we prove that the semiflow $Q$ is
monostable.

\begin{lem}
For continuous periodic initial data $1\geq u_0\geq 0,$
$u_0\not\equiv 0$, we have $u(x,t,u_0)\to 1,$ as $t\to\infty.$
\end{lem}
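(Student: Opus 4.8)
The plan is to show convergence to $1$ by a standard sub-/super-solution sandwich argument adapted to the periodic, measure-coefficient setting. First I would establish the upper bound: since $u_0\le 1$ and the constant function $1$ is a (weak, hence mild) solution of \eqref{Mainequation}, the comparison principle gives $u(x,t,u_0)\le 1$ for all $t\ge0$, so it suffices to prove $\liminf_{t\to\infty}u(x,t,u_0)\ge1$ locally uniformly. Because $u_0\not\equiv0$ is continuous and periodic, there is a point and hence (by periodicity) a set of positive measure on which $u_0$ is bounded below by some $\delta_0>0$; by the strong maximum principle applied to the smooth approximants $u_n$ and passing to the limit, $u(\cdot,t_0,u_0)\ge\delta>0$ everywhere for some fixed small $t_0>0$. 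Thus, replacing $u_0$ by $u(\cdot,t_0,u_0)$, I may assume from the start that $\delta\le u_0(x)\le 1$ with $u_0$ periodic.

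Next I would construct a periodic, time-increasing subsolution. Consider the spatially periodic ODE-type problem $v_t=v_{xx}+\bar b(x)v(1-v)$ with periodic initial data $v(\cdot,0)=\delta$ (the constant). By the comparison principle, $u(x,t,u_0)\ge v(x,t)$ for all $t\ge0$. I claim $v(x,t)$ is nondecreasing in $t$ and converges as $t\to\infty$ to a periodic steady state. Monotonicity follows because $v(\cdot,t_1)\ge v(\cdot,0)$ for small $t_1>0$ (the reaction term $\bar b\cdot\delta(1-\delta)\ge0$ pushes $v$ up, rigorously checked on the $b_n$-approximants), and then the semiflow property together with comparison propagates this: $v(\cdot,t+t_1)\ge v(\cdot,t)$ for all $t$. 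Since $v$ is bounded above by $1$ and periodic, $v(\cdot,t)$ increases to some periodic limit $V(x)\in E_L$ (convergence in the local uniform topology, using the precompactness from Proposition \ref{compact} / Proposition \ref{QIsSemiFlow}), and $V$ is a periodic weak steady state: $-V''=\bar b(x)V(1-V)$ in the weak sense, with $\delta\le V\le1$.

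The crucial step is then to show $V\equiv1$, i.e. that the only periodic weak steady state of \eqref{Mainequation} bounded below by a positive constant is the constant $1$. Suppose $0<V\le1$ is such a state with $V\not\equiv1$. Testing the weak equation against $1/V$ (or arguing as in the derivation of \eqref{dividingbypsiandintegrate}) and against suitable periodic test functions, or more simply: $W:=1-V$ satisfies $-W''=-\bar b(x)V\,W$ in the weak sense with $0\le W<1$ periodic, so $W$ would be a nonnegative periodic solution of a linear equation $-W''+\bar b(x)V\,W=0$; integrating against the positive periodic function $W$ itself gives $\int_{[0,L)}(W')^2 + \int_{[0,L)}\bar b(x)V W^2 = 0$, forcing $W'\equiv0$ and $\bar b(x)VW^2\equiv0$. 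Hence $W$ is constant; if $W\equiv c\in(0,1)$ then $\bar b\,V\,c^2\equiv0$ as a measure, contradicting $\int_{[0,L)}\bar b=\alpha L>0$ together with $V\ge\delta>0$. Therefore $W\equiv0$, i.e. $V\equiv1$. Combining, $u(x,t,u_0)\ge v(x,t)\to1$ locally uniformly, and with the upper bound $u\le1$ we conclude $u(x,t,u_0)\to1$ as $t\to\infty$.

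The main obstacle is the measure-coefficient nature of $\bar b$: the standard maximum-principle and monotone-iteration arguments must be justified either through the smooth approximants $b_n\to\bar b$ (using the continuous dependence results of Section 3 and the convergence lemmas such as Lemma \ref{lem:A}) or directly in the weak/mild formulation. In particular, the positivity lower bound $u(\cdot,t_0)\ge\delta>0$ and the monotonicity of $v$ in $t$ need the comparison principle in a form valid for mild solutions with measure coefficients (Proposition on the comparison principle, stated above), and the identification of $V$ as a weak steady state requires passing to the limit $t\to\infty$ inside the weak formulation — which is where the local-uniform precompactness and the structure of $\overline{\Lambda}(\alpha)$ enter. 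These are the points I would write out carefully.
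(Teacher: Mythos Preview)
Your proposal is correct and follows essentially the same sandwich argument as the paper: bound above by $1$, obtain strict positivity at a positive time, compare with the solution $\tilde v$ launched from a small constant, use monotonicity in $t$ to pass to a periodic steady state, and identify that steady state as $1$. The only minor differences are implementational: the paper gets positivity by comparing $u$ directly with the heat equation (rather than via approximants), and it identifies the limiting steady state as $1$ by the one-line observation that $P^+_{xx}=-\bar b\,P^+(1-P^+)\le 0$ together with periodicity forces $P^+$ to be constant, whereas you obtain the same conclusion via the energy identity for $W=1-V$.
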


\begin{proof}
We consider interval $[0,L].$ We have
\[
\ u_t=u_{xx}+\bar{b}(x)u(1-u)
\]
with initial data $u(x,0)=u_0(x).$ Let $v$ be the solution of
\[
\ v_t=v_{xx}
\]
with initial data $v(x,0)=u_0(x).$ {}From the classical theory of
the heat equation,  $v(x,t)>0$ for any $t>0,$ $x\in[0,L].$

{}From the weak comparison principle, we have $1\geq
u(x,t,u_0)\geq 0.$ Moreover $\bar{b}\geq 0.$ Hence
\[
\ (u-v)_t\geq (u-v)_{xx},
\]
which means $u(x,t)\geq v(x,t)>0.$ For a given $t_1>0$, we may
find a constant $a>0$ such that $u(x,t_1)>a.$ This can be done
because $u(x,t_1)$ is periodic with respect to $x.$

Let $\tilde{v}(x,t)$ be the solution of
\[
\ \tilde{v}_t=\tilde{v}_{xx}+\bar{b}(x)\tilde{v}(1-\tilde{v})
\] with initial data $\tilde{v}(x,0)=a.$ Obviously \[
\ u(x,t+t_1,u_0)\geq \tilde{v}(x,t).
\]
Since $a>0$ is a sub-solution of equation
$\tilde{v}_t=\tilde{v}_{xx}+\bar{b}(x)\tilde{v}(1-\tilde{v})$,
from the comparison principle, we can get that $\tilde{v}(x,t)$ is
increasing in $t.$

Set
\[ \ P^+(x)=\lim_{t_n\to\infty}\tilde{v}(x,t_n).
\]
Then \[ \ P^+_{xx}+\bar{b}(x)P^+(1-P^+)=0
\] in the weak sense.
{}From $P^+_{xx}\leq 0$ and $P^+(x)$ is periodic and $P^+(x)\geq
a>0$, we have $P^+\equiv 1.$ Since $u(x,t+t_1,u_0)\geq
\tilde{v}(x,t)$, we have $u(x,t,u_0)\to 1,\,t\to\infty.$
\end{proof}
Summarizing, for any $t>0$, $Q_t$ has the following properties:
\begin{enumerate}
\item[{\rm(i)}] $Q_t$ is order preserving in the sense that if
$u_0,v_0\in C(\mathbb{R};[0,1])$ and $u_0(x)\leq v_0(x)$ on
$\mathbb{R}$, $Q_t(u_0)(x)\leq Q_t(v_0)(x)$ on $\mathbb{R}$.

\item[{\rm(ii)}] $Q_t(T_L(u_0))=T_L(Q_t(u_0))$,  where $T_L$ is a
shift operator with $T_L(u)(x)=u(x-L)$ .

\item[{\rm(iii)}] $Q_t(0)=0$ and $Q_t(1)=1$. For any $u_0\in
C(\mathbb{R};[0,1])$ with $u_0(x+L)=u_0$ on $\mathbb{R}$ and
$u\not \equiv 0$, $Q_t(u_0)\rightarrow 1$ in the space
$C(\mathbb{R};[0,1])$ with respect to the local uniform topology.

\item[{\rm(iv)}]  Given $T>0$, the family of maps $Q_t: C(\mathbb
R;[0,1]\to C(\mathbb R;[0,1])),\,0\leq t\leq T,$ is uniformly
equicontinuous with respect to the local uniform topology.

\item[{\rm(v)}] $Q_t(C(\mathbb{R};[0,1]))$ is precompact in
$C(\mathbb{R};[0,1])$ with respect to the local uniform topology.
\end{enumerate}

Thanks to these properties, the theorem holds:
\begin{them} \label{them5.6}
For any $\bar{b}\in\overline{\Lambda}(\alpha),$ the spreading
speed in the positive and negative directions $c^{**}(\bar{b})$
and $\tilde{c}^{**}(\bar{b})$ exist and they coincide with the
minimal travelling wave speed in the positive and negative
directions $c^*(\bar{b})$ and $\tilde{c}^*(\bar{b})$ respectively.
\end{them}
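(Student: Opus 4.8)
The plan is to invoke the abstract theory of spreading speeds for monostable order-preserving semiflows, as developed by Weinberger \cite{s8} and Liang and Zhao \cite{s7}. We have already verified in Propositions \ref{equivalent norm}--\ref{QIsSemiFlow} and in properties (i)--(v) above that the time-$t$ maps $Q_t$ of equation \eqref{Mainequation} form a semiflow on $C(\mathbb{R};[0,1])$ that is order preserving, $L$-periodic (i.e.\ commutes with the shift $T_L$), monostable (the two equilibria are $0$ and $1$, and every nontrivial periodic initial datum is attracted to $1$), uniformly equicontinuous on finite time intervals, and eventually compact. These are precisely the structural hypotheses required by the general theory. First I would check that our semiflow fits the framework of \cite{s7} (or the discrete-time version in \cite{s8} applied to the time-one map $Q_1$): in that framework one obtains, for each direction, a well-defined spreading speed, characterized as the asymptotic speed of propagation of solutions arising from compactly supported data, and one knows that this spreading speed coincides with the minimal speed of pulsating travelling waves connecting $0$ and $1$.

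The key steps, in order, are as follows. Step one: cite the existence theorem for spreading speeds of monostable periodic semiflows (Weinberger \cite{s8}, Theorems 2.1--2.2, or Liang--Zhao \cite{s7}) to conclude that $c^{**}(\bar b)$ and $\tilde c^{**}(\bar b)$ exist and are finite and positive; finiteness uses the upper bound $2\sqrt{\alpha+\alpha^2L^2}$ already established, and positivity uses the lower bound $2\sqrt{\alpha}$, both of which transfer from the smooth case by approximation, or can be seen directly via the sub/super-solution comparisons in the monostable lemma. Step two: invoke the part of the same theory that identifies the spreading speed with the minimal pulsating-travelling-wave speed; this gives $c^{**}(\bar b)=c^*(\bar b)$ in the positive direction. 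Step three: apply the reflection $x\mapsto -x$; since the class $\overline\Lambda(\alpha)$ is not assumed symmetric, we instead note that the reflected coefficient $\bar b(-x)$ is again in $\overline\Lambda(\alpha)$ and that a travelling wave in the negative direction for $\bar b(x)$ corresponds to one in the positive direction for $\bar b(-x)$; the same abstract theorem then yields $\tilde c^{**}(\bar b)=\tilde c^*(\bar b)$. Step four: record the consequence $c^{**}(\bar b)=\tilde c^{**}(\bar b)$, which will follow once we know (from Subsection 5.2, Theorem \ref{cstarbequalcestarb}) that $c^*(\bar b)=c_e^*(\bar b)=\tilde c_e^*(\bar b)=\tilde c^*(\bar b)$, the middle equality coming from the fact that $\mu(\lambda,\bar b)$ and $\mu(-\lambda,\bar b)$ determine the same infimum in Definition \ref{Defofcestar} — or, more self-containedly, from the symmetry argument of Step three applied to the eigenvalue problem.

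The main obstacle I anticipate is a bookkeeping one rather than a conceptual one: the abstract results in \cite{s8,s7} are stated for abstract monotone maps or semiflows on ordered spaces of continuous functions, and one must make sure that \emph{all} of their hypotheses — in particular the continuity of $Q_t$ with respect to the local uniform topology, the translation invariance by the full lattice generated by $L$, and the precompactness that makes the limiting front profiles well defined — are met by our measure-coefficient equation. The continuity and compactness are exactly what Propositions \ref{lcontinuous} and \ref{compact} provide, so the verification is essentially complete; the only delicate point is that the nonlinearity $\bar b(x)u(1-u)$ is merely measure-valued in $x$, so one cannot appeal to classical parabolic regularity, and must instead rely on the mild-solution formulation and the equicontinuity lemmas of Section 6 throughout. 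A secondary point requiring care is confirming that the travelling waves produced by the abstract theory are travelling waves in the precise sense of Definition \ref{Definitionoftravellingwave} (pulsating form $u(x-L,t)=u(x,t+T)$ with the correct boundary limits), which amounts to matching the abstract "steady states of the shifted semiflow" with our pulsating-wave definition; this is routine but should be stated explicitly.
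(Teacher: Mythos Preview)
Your approach is essentially the same as the paper's: verify that the semiflow $Q_t$ satisfies the structural hypotheses (i)--(v), then invoke the abstract spreading-speed and travelling-wave theory of Weinberger \cite{s8} (or Liang--Zhao \cite{s7}) to obtain both the existence of the spreading speeds and their coincidence with the minimal wave speeds. The paper's proof is in fact even terser than yours --- it simply asserts that (i)--(v) are Hypotheses~2.1 of \cite{s8} and cites Theorem~2.6 there --- so your more explicit enumeration of the steps, including the reflection argument for the negative direction, is entirely compatible and arguably clearer. One small remark: your Step~4 (showing $c^{**}(\bar b)=\tilde c^{**}(\bar b)$) is not part of the statement of Theorem~\ref{them5.6}; in the paper this equality is recorded separately in Theorem~\ref{Themforspreadingspeed} and relies on the eigenvalue result $c^*=\tilde c^*$ from Theorem~\ref{cstarbequalcestarb}, so you may safely omit it here.
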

\begin{proof}
This theorem can be obtained from the results of
Weinberger\cite{s8} or a more abstract results of Liang, Zhao
\cite{s7}. In fact, for any $t>0,$ $Q_t$ satisfies (i)-(v) which
are Hypotheses 2.1 in \cite{s8}, and from Theorem 2.6 of \cite{s8}
we can get the existence of $c^*(\bar{b}),$
$\bar{b}\in\overline{\Lambda}(\alpha).$
\end{proof}

\subsection{Proof of $c^*(\bar{b})=c_e^*(\bar{b})$}

To prove $c^*(\bar{b})=c_e^*(\bar{b})$, we consider the linearized
equation  \[ \ u_t=u_{xx}+\bar{b}(x)u.
\] We define a linear space $\mathbb X$ by \[ \mathbb{X}:=\{\phi=e^{\xi_1x}\phi_1+e^{\xi_2x}\phi_2\,|\,
\xi_1,\xi_2\in \mathbb{R},\,\phi_1,\phi_2\in BC(\mathbb R)\}\] and
the subset $$\mathbb{X}_M^\xi:=\{\phi\in \mathbb{X}:|\phi(x)|\leq
Me^{\xi|x|}\}$$ for any $M,\xi>0$. Again we equip $\mathbb{X}$
with the local uniform topology too. Similar to the proof of
Theorem \ref{wlposedness} and Proposition \ref{lcontinuous}, we
can obtain
\begin{lem}
For any $\phi\in \mathbb{X}$, the mild solution $u(x,t)$ of the
equation
\[ \ u_t=u_{xx}+\bar{b}(x)u
\] with initial data $u(x,0)=\phi(x)$ exists for all $t>0$ and is
unique. The mild solution is a weak solution and for any $t>0$,
$u(\cdot,t,\phi)\in \mathbb{X}$. Moreover, the mild solutions
depend on the initial data continuously with respect the local
uniformly topology on $\mathbb{X}_M^\xi$ for any $\xi,M>0$.
\end{lem}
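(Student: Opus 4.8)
The plan is to reduce the linear problem to the bounded-coefficient, bounded-data setting already treated in Theorem \ref{wlposedness}, Lemma \ref{weaksolismildsol} and Proposition \ref{lcontinuous}. By linearity it is enough to solve the equation for initial data of the single-exponential form $\phi(x)=e^{\xi x}\phi_0(x)$ with $\phi_0\in BC(\mathbb R)$; the general case $\phi=e^{\xi_1 x}\phi_1+e^{\xi_2 x}\phi_2$ is then recovered by adding the two resulting solutions, and the sum again lies in $\mathbb X$ because each summand does. For a single exponential I would substitute $u(x,t)=e^{\xi x}v(x,t)$. Completing the square in the Gaussian and integrating by parts against test functions, one checks that $u$ is a mild (resp.\ weak) solution of $u_t=u_{xx}+\bar b(x)u$ with datum $e^{\xi x}\phi_0$ if and only if $v$ is a mild (resp.\ weak) solution of
\[
v_t=v_{xx}+2\xi v_x+\xi^2 v+\bar b(x)\,v,\qquad v(x,0)=\phi_0(x),
\]
where in the mild formulation the heat kernel $G(z,t)$ is replaced by the tilted kernel $\widetilde G_\xi(z,t):=e^{\xi^2 t}G(z+2\xi t,t)$, the fundamental solution of $\partial_t-\partial_{xx}-2\xi\partial_x-\xi^2$. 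The crucial point is that the zeroth-order coefficient in the transformed equation is still the same $L$-periodic measure $\bar b$, so Lemma \ref{lem:A} applies verbatim, while the initial datum $\phi_0$ is now bounded.

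For the transformed equation I would then repeat the argument of Section 3 almost word for word. Choose $b_n\in\Lambda(\alpha)$ with $b_n\to\bar b$ in the weak\,$*$ sense, solve the approximating linear problems classically (global solvability is immediate since the equation is linear), and obtain bounds on $[-M,M]\times[t_1,t_2]$ uniform in $n$: the estimate $\|v_n(\cdot,t)\|_{L^\infty}\le C(t)\|\phi_0\|_{L^\infty}$ follows from the mild formula together with $\big|\int \widetilde G_\xi(x-y,t-s)\bar b(y)v(y,s)\,dy\big|\le C(t-s)^{-1/2}\|v(\cdot,s)\|_{L^\infty}$ (an instance of Lemma \ref{lem:A}) and Lemma 7.7 of \cite{s3}, while the equicontinuity in $(x,t)$ comes from minor variants of the lemmas of Section 6 (the extra drift term $2\xi v_x$ being a harmless lower-order term). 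Arzela--Ascoli and a diagonal extraction then produce a limit $v$, which is a weak solution; that it is in fact a mild solution is obtained exactly as in Lemma \ref{weaksolismildsol}, the ``reaction'' term being now the globally Lipschitz function $v$, which is even easier to pass to the limit than $u(1-u)$. Undoing the substitution gives the mild solution $u=e^{\xi x}v$; since $v(\cdot,t)$ is bounded and continuous, $u(\cdot,t)=e^{\xi x}v(\cdot,t)\in\mathbb X$ for every $t>0$, and for general $\phi$ one adds the two exponential pieces.

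Uniqueness and continuous dependence I would treat as in Theorem \ref{wlposedness} and Proposition \ref{lcontinuous}. The difference $w=v-\tilde v$ of two solutions satisfies the same linear equation with a bounded measure-valued zeroth-order coefficient, so the Gronwall argument built on Lemma 7.7 of \cite{s3} yields $\|w(\cdot,t)\|_{L^\infty}\le C(t)\|w(\cdot,0)\|_{L^\infty}$, which gives uniqueness and $L^\infty$-continuous dependence for the bounded datum $\phi_0$. Translating back through $u=e^{\xi x}v$ and summing over the two exponents, one obtains continuity of the solution map on $\mathbb X_M^\xi$ with respect to the local uniform topology: the point is that on $\mathbb X_M^\xi$ all data obey the same bound $|\phi|\le Me^{\xi|x|}$, so the estimates of Proposition \ref{lcontinuous} go through verbatim once the norm $\|\cdot\|_0^1$ is replaced by a local uniform norm reweighted by $e^{-\xi|x|}$ (the Gaussian decay of $\widetilde G_\xi$ beats this weight, so the weighted sums still converge).

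The step requiring the most care is precisely this bookkeeping of the exponential weights while pushing the measure approximation $b_n\to\bar b$ through the integral equation: one must verify that the Lemma \ref{lem:A}-type bounds survive after the Gaussian has been tilted and the solution is allowed to grow like $e^{\xi|x|}$ in $x$. Conceptually this is not a new difficulty, since the substitution $u=e^{\xi x}v$ converts everything back into the bounded setting of Section 3; alternatively one may bypass the substitution altogether and run the contraction/approximation scheme directly in the space of functions with $|u(\cdot,t)|\le C(t)e^{\xi|x|}$, the required bound $\alpha L\sum_{k\in\mathbb Z}\max_{0\le y\le L}G(x-y-kL,t-s)\,e^{\xi|y+kL-x|}\le C(t-s)^{-1/2}$ on $[0,T]$ being, once more, a direct consequence of Lemma \ref{lem:A}.
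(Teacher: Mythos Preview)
The paper does not actually prove this lemma: it merely introduces it with the phrase ``Similar to the proof of Theorem \ref{wlposedness} and Proposition \ref{lcontinuous}, we can obtain'', and then states the result. Your proposal correctly supplies the missing details, and the route you take---reducing to bounded data via the substitution $u=e^{\xi x}v$, which transforms the equation into $v_t=v_{xx}+2\xi v_x+\xi^2 v+\bar b(x)v$ with tilted kernel $\widetilde G_\xi(z,t)=e^{\xi^2 t}G(z+2\xi t,t)$, and then rerunning the Section~3 machinery---is exactly the natural way to make the paper's ``similarly'' precise. The linearity argument, the approximation by smooth $b_n$, the Arzel\`a--Ascoli extraction, and the Gronwall-type uniqueness via Lemma~7.7 of \cite{s3} all go through as you describe.

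One small remark: the exponential substitution is your own device (the paper never writes it down), but it is not really a different method so much as the obvious way to carry out what the paper asserts. The alternative you mention at the end---working directly in the weighted space $|u|\le C(t)e^{\xi|x|}$ without substituting---is closer in spirit to simply repeating the proofs of Theorem~\ref{wlposedness} and Proposition~\ref{lcontinuous} verbatim with an extra weight, which is presumably what the authors had in mind. Either way the content is the same, and your proposal is correct.
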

Define \[ \Phi_t(\phi)(x)=u(x,t,\phi), \,\,\,\forall
\phi\in\mathbb{X},\]
where $u(x,t,u_0)$ is the solution of \[ \
u_t=u_{xx}+\bar{b}(x)u
\] with initial data $u(x,0)=u_0(x).$
Similarly to the proof of Lemma \ref{Equicontinuity} and
Proposition \ref{QIsSemiFlow}, we can prove the following lemma:
\begin{lem} For any $t\geq 0$,
$\Phi_t$ is continuous on $\mathbb X$ with respect to the local
uniform topology. \end{lem}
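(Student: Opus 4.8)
The plan is to mimic the arguments used for Theorem \ref{wlposedness} and Proposition \ref{lcontinuous}, but carried out in an exponentially weighted sup-norm adapted to the space $\mathbb{X}$. Since the equation $u_t=u_{xx}+\bar b(x)u$ is linear, $\Phi_t$ is a linear operator, so it is enough to prove the following quantitative statement: for every $M,\xi>0$ and $T>0$ there is a constant $K=K(M,\xi,T)$ such that, writing $\|v\|_\xi:=\sup_{x\in\mathbb{R}}e^{-\xi|x|}|v(x)|$, one has $\|\Phi_t(\phi)-\Phi_t(\tilde\phi)\|_\xi\le K\|\phi-\tilde\phi\|_\xi$ for all $\phi,\tilde\phi\in\mathbb{X}_M^\xi$ and $t\in[0,T]$. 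Such a weighted Lipschitz bound implies continuity of $\Phi_t$ on each $\mathbb{X}_M^\xi$ in the local uniform topology, since on a bounded interval the weight $e^{-\xi|x|}$ is bounded away from $0$, so control of $\|\cdot\|_\xi$ controls the uniform norm there; as $\mathbb{X}=\bigcup_{M,\xi}\mathbb{X}_M^\xi$, continuity of $\Phi_t$ on all of $\mathbb{X}$ follows.

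First I would set $w:=\Phi_t(\phi)-\Phi_t(\tilde\phi)$ and $w_0:=\phi-\tilde\phi$. By the previous lemma $w$ is the mild solution of the linear problem with initial datum $w_0$, so it satisfies the Duhamel identity
\[
w(x,t)=\int_{\mathbb{R}}G(x-y,t)w_0(y)\,dy+\int_0^t\!\!\int_{\mathbb{R}}G(x-y,t-s)\bar b(y)w(y,s)\,dy\,ds,
\]
the last integral being understood in the periodic-measure sense of \eqref{baslinearfunctional}. Two elementary estimates are needed. (a) For the Gaussian part, $|y|\le|x|+|x-y|$ gives $\int_{\mathbb{R}}G(x-y,t)e^{\xi|y|}\,dy\le e^{\xi|x|}e^{\xi^2 t}$, hence $\|G(\cdot,t)*v\|_\xi\le e^{\xi^2 t}\|v\|_\xi$. (b) For the reaction part, using the $L$-periodicity of $\bar b$ and Lemma \ref{lem:A} applied to $\eta(y)=G(x-y,t-s)e^{\xi|y|}$ — which satisfies the summability condition \eqref{sum-eta} because the Gaussian decay of $G$ dominates the exponential weight — one gets $\int_{\mathbb{R}}G(x-y,t-s)\bar b(y)e^{\xi|y|}\,dy\le \dfrac{C_1}{\sqrt{t-s}}\,e^{\xi|x|}e^{\xi^2(t-s)}$ for a constant $C_1=C_1(\alpha,L,\xi)$.

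Combining (a), (b) with the Duhamel identity and setting $\rho(t):=\|w(\cdot,t)\|_\xi$ — which is finite because the same estimates (a), (b) applied to $\phi$ and $\tilde\phi$ themselves (this is essentially what makes the solution stay in $\mathbb{X}$) give an a priori bound $\|\Phi_t(\phi)\|_\xi\le M'(T)$ on $[0,T]$ — I would obtain an inequality of the form
\[
\rho(t)\le e^{\xi^2 t}\rho(0)+C_2\int_0^t\frac{\rho(s)}{\sqrt{t-s}}\,ds\qquad(0\le t\le T),
\]
with $C_2=C_2(\alpha,L,\xi,T)$. Applying Lemma 7.7 of Alfaro, Hilhorst, Matano \cite{s3}, exactly as in the proofs of Theorem \ref{wlposedness} and Proposition \ref{lcontinuous}, yields $\rho(t)\le K(M,\xi,T)\rho(0)$ for $t\in[0,T]$, which is the desired bound.

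The step I expect to be the main obstacle is estimate (b): one must check carefully that $y\mapsto G(x-y,t-s)e^{\xi|y|}$ meets the hypothesis \eqref{sum-eta} of Lemma \ref{lem:A} uniformly for the relevant $(x,s)$, and that the resulting bound is integrable in $s$ near $s=t$ (the $(t-s)^{-1/2}$ singularity). Both hold because the kernel decays like $e^{-y^2/4(t-s)}$, but the bookkeeping — splitting $\sum_k\max_{0\le z\le L}G(x-z-kL,t-s)e^{\xi|z+kL|}$ into the few terms with $y\approx x$ and the geometrically small tail, while keeping all constants uniform in $t-s\in(0,T]$ — requires some care.
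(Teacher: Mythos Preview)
Your estimates (a), (b) and the resulting Gronwall inequality are correct: they show that $\Phi_t$ is Lipschitz on $\mathbb{X}_M^\xi$ with respect to the weighted norm $\|\cdot\|_\xi$. The gap is in the sentence claiming that this Lipschitz bound implies continuity in the local uniform topology. You note that $\|\cdot\|_\xi$ controls the sup norm on any bounded interval, which takes care of the \emph{output} side. But you also need the \emph{input} side, namely that local-uniform closeness of $\phi,\tilde\phi$ inside $\mathbb{X}_M^\xi$ makes $\|\phi-\tilde\phi\|_\xi$ small. That is false: take $\phi_n(x)=M e^{\xi x}\psi(x-n)$ with $\psi$ a smooth cut-off equal to $0$ on $(-\infty,0]$ and $1$ on $[1,\infty)$. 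Then $\phi_n\in\mathbb{X}_M^\xi$, $\phi_n\to 0$ locally uniformly, yet $\|\phi_n\|_\xi=M$ for every $n$. Your inequality then yields only $\|\Phi_t(\phi_n)\|_\xi\le KM$, which says nothing about $\Phi_t(\phi_n)\to 0$ locally uniformly. In short, the $\|\cdot\|_\xi$ topology is strictly stronger than the local uniform topology on $\mathbb{X}_M^\xi$, so Lipschitz continuity in the former does not give continuity in the latter.

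The paper's route, to which it only alludes by citing Proposition~\ref{lcontinuous} and Lemma~\ref{Equicontinuity}, is to work from the start with a norm that \emph{does} metrize the local uniform topology on the relevant bounded set: the norm $\|u\|_0^1=\sum_{i\in\mathbb Z}2^{-|i|}\max_{[i,i+1]}|u|$ of Proposition~\ref{equivalent norm}. For the linear problem on $\mathbb{X}_M^\xi$ one first strips off the exponential growth, passing from $v$ to $e^{-\xi|x|}v\in C(\mathbb{R};[-M,M])$, and then runs the $\|\cdot\|_0^1$-Gronwall estimate of Proposition~\ref{lcontinuous}, absorbing the extra factor $e^{\xi|y|-\xi|x|}\le e^{\xi|x-y|}$ into the kernel bounds. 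Alternatively, you can rescue your own computation by splitting $w_0=w_0\chi_{[-R,R]}+w_0\chi_{\{|x|>R\}}$: on the tail use linearity together with the a~priori bound $\|w_0\|_\xi\le 2M$ and the Gaussian decay of $G$ to make the contribution small for large $R$, and on $[-R,R]$ use the local-uniform smallness of $w_0$. Either way, an additional argument beyond the single weighted Lipschitz bound is required.
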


Next, for any $M>0$, let $A^0_M=C(\mathbb{R};[M,M])$ and
$A^\xi_M=\{e^{\xi x}u\,|\,u\in A^0_M\}$ for $\xi\in \mathbb{R}$.
Then we have

\begin{lem}
$\Phi_t(A^\xi_M)$ is precompact in $\mathbb{X}$ with respect to
the local uniform topology.
\end{lem}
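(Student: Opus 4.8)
The plan is to deduce precompactness from a weighted Arzela--Ascoli argument. Fix $t>0$ and write $a:=|\xi|$, so that every $\phi\in A^\xi_M$ satisfies $|\phi(x)|\le Me^{a|x|}$, i.e. $A^\xi_M\subset\mathbb X_M^a$. I will show that $\Phi_t(A^\xi_M)$ is (i) contained in a single exponentially bounded set $\mathbb X_{C'}^a$ with $C'=C'(t)$ independent of $\phi$, and (ii) uniformly equicontinuous on each bounded interval, and moreover that (iii) any locally uniform limit of a sequence in $\Phi_t(A^\xi_M)$ lies again in $\mathbb X$. Since the local uniform topology restricted to an exponentially bounded set is metrizable (this is essentially the content of Proposition \ref{equivalent norm}), (i)--(iii) give sequential compactness of the closure, which is precisely precompactness in $\mathbb X$.

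For (i) I would repeat the estimate of Theorem \ref{wlposedness} with the weighted quantity $\rho(t):=\sup_{x\in\mathbb R}e^{-a|x|}|u(x,t)|$ in place of $\|u(\cdot,t)\|_{L^\infty}$. Expressing the mild solution of $u_t=u_{xx}+\bar b(x)u$ with datum $\phi$ through the Gaussian kernel $G$, the free term is bounded by $\int_{\mathbb R}G(x-y,t)Me^{a|y|}\,dy\le C_1e^{a|x|}$ with $C_1=C_1(M,a,t)$, since $\int_{\mathbb R}G(z,\tau)e^{a|z|}\,dz\le 2e^{a^2\tau}$. For the reaction term, applying Lemma \ref{lem:A} to $\eta(y)=G(x-y,t-s)e^{a|y|}$ (which is $\bar b$-integrable because $G$ decays faster than any exponential) and comparing the resulting period-sum with a Gaussian integral, one obtains $\int_{\mathbb R}G(x-y,t-s)\bar b(y)|u(y,s)|\,dy\le M'(t-s)^{-1/2}e^{a|x|}\rho(s)$ with $M'=M'(\alpha,L,a)$. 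A Gronwall/iteration argument as in the proof of Theorem \ref{wlposedness}, now controlled by the solution of $r(t)=C_1+M'\int_0^t(t-s)^{-1/2}r(s)\,ds$ via Lemma 7.7 of \cite{s3}, then yields $\rho(t)\le C'(t)$ uniformly over $\phi\in A^\xi_M$, i.e. $|\Phi_t(\phi)(x)|\le C'(t)e^{a|x|}$ for all $x$.

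Step (ii) follows from the equicontinuity estimates of Section 6 (Lemmas \ref{Equicontinuity} and \ref{EquicontinuousForX}) together with the argument of Proposition \ref{lcontinuous}, applied to the linear equation: on each $[-R,R]$ the data in $A^\xi_M$ are uniformly bounded and, by (i), the forcing term $\bar b(x)u$ is uniformly controlled, so $\{\Phi_t(\phi):\phi\in A^\xi_M\}$ is uniformly equicontinuous there. For a sequence $\phi_n\in A^\xi_M$, Arzela--Ascoli on $[-k,k]$ together with a diagonal extraction give a subsequence with $\Phi_t(\phi_{n_j})\to v$ locally uniformly, and the bound of (i) passes to the limit, so $|v(x)|\le C'(t)e^{a|x|}$. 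To verify $v\in\mathbb X$, pick a smooth cutoff $\chi$ with $\chi\equiv1$ on $[1,\infty)$ and $\chi\equiv0$ on $(-\infty,0]$, set $\chi_+:=\chi$, $\chi_-:=1-\chi$, and define $\phi_1:=e^{-ax}\chi_+v$ and $\phi_2:=e^{ax}\chi_-v$. On $\operatorname{supp}\chi_+\subset[0,\infty)$ one has $|e^{-ax}v(x)|\le C'(t)$, and on $\operatorname{supp}\chi_-\subset(-\infty,1]$ one has $|e^{ax}v(x)|\le e^{a}C'(t)$; hence $\phi_1,\phi_2\in BC(\mathbb R)$ and $v=e^{ax}\phi_1+e^{-ax}\phi_2\in\mathbb X$.

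I expect the main obstacle to be step (i): carrying the exponential weight $e^{a|x|}$ through the Duhamel formula when $\bar b$ is only a periodic measure, so that the period-sum estimate of Lemma \ref{lem:A} closes into a Gronwall inequality for $\rho$ that is uniform over $\phi\in A^\xi_M$. Once that weighted a priori bound is in hand, the equicontinuity (ii) and the membership of the limit in $\mathbb X$ (iii) are routine, and the precompactness follows.
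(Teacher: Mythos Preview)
The paper does not actually prove this lemma: it is stated immediately after the remark that the preceding lemma can be proved ``similarly to the proof of Lemma \ref{Equicontinuity} and Proposition \ref{QIsSemiFlow}'', and no further argument is given. So your proposal is not competing with a written proof but rather supplying one where the paper only gestures at Section~6 and Proposition~\ref{QIsSemiFlow}.

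Your outline is correct. The weighted a~priori bound in step~(i), the local equicontinuity in step~(ii), and the cutoff argument in step~(iii) showing that any locally uniform limit of an exponentially bounded sequence lies in $\mathbb X$ are all sound, and together give sequential precompactness in the local uniform topology. One remark on step~(ii): the Section~6 lemmas are written for smooth $b$ and bounded data, so you are implicitly using the approximation argument of Theorem~\ref{wlposedness} (take $b_n\to\bar b$, pass to the limit) together with your bound~(i); this is fine but worth making explicit.

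A shorter route, and probably closer to what the authors have in mind, is to exploit the \emph{linearity} of the equation directly: for $\phi=e^{\xi x}\phi_0\in A^\xi_M$ with $\|\phi_0\|_\infty\le M$, set $v(x,t):=e^{-\xi x}\Phi_t(\phi)(x)$. Then $v$ is the mild solution of $v_t=v_{xx}+2\xi v_x+(\xi^2+\bar b(x))v$ with bounded initial datum $\phi_0$, and the equicontinuity and uniform boundedness of $\{v(\cdot,t):\phi_0\in A^0_M\}$ follow from the same Gaussian estimates as in Section~6 (the extra first-order term $2\xi v_x$ only adds a drift $e^{-\xi^2 t}G(x-2\xi t,\cdot)$ to the kernel and causes no difficulty). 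This replaces your weighted Gronwall argument by an algebraic substitution and gives both (i) and (ii) at once; step~(iii) is then immediate since $\Phi_t(\phi)=e^{\xi x}v(\cdot,t)$ already exhibits the limit as an element of $\mathbb X$. Your approach is a bit more general (it would work for equations where the exponential substitution is not available), at the cost of the extra work in step~(i) that you correctly flag as the main point.
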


% For any $M>0$, let $A_M=\{u\in BC(\mathbb R),
%|u(x)|\leq M \, {\rm on}\, \mathbb{R}\}$. Then
%\end{lem}
%Furthermore,

In the following lemma, we show that $\Phi_t$ is {\textbf strongly
order-preserving}.
\begin{lem}
For any $u_0\in BC(\mathbb R)$ with $u_0(x)\geq 0,$ $\forall x\in
\mathbb{R},$ $u\not\equiv 0$ and any $t>0$, we have
$\Phi_t(u_0)(x)>0$ for $x\in\mathbb R.$
\end{lem}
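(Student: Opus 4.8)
The plan is to read off the conclusion from the Duhamel representation of $\Phi_t(u_0)$, using the strict positivity of the Gaussian kernel, once nonnegativity of the solution is established. Writing $u(x,t)=\Phi_t(u_0)(x)$, the argument of Lemma~\ref{weaksolismildsol} shows that $u$ is the mild solution, i.e.
\[
u(x,t)=\int_{\mathbb R}G(x-y,t)u_0(y)\,dy+\int_0^t\!\!\int_{\mathbb R}G(x-y,t-s)\bar b(y)u(y,s)\,dy\,ds,
\]
the last term being understood as in \eqref{baslinearfunctional}. Granting for the moment that $u\ge 0$ on $\mathbb R\times(0,\infty)$, the nonnegativity of $\bar b$ forces the double integral to be $\ge 0$, hence
\[
u(x,t)\ \ge\ \int_{\mathbb R}G(x-y,t)u_0(y)\,dy \qquad (x\in\mathbb R,\ t>0).
\]
Since $G(\cdot,t)>0$ everywhere and $u_0$ is continuous, nonnegative and not identically zero, $u_0$ is bounded below by a positive constant on some nondegenerate interval; convolution against the strictly positive kernel $G(\cdot,t)$ then gives a strictly positive value at \emph{every} point $x$. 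This yields $u(x,t)>0$ for all $x\in\mathbb R$ and $t>0$, which is the assertion.

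It therefore remains to prove $u\ge 0$. First I would note that $u(\cdot,t)$ stays bounded: feeding the Duhamel identity the estimate of Lemma~\ref{lem:A} (which bounds $\big|\int_{\mathbb R}G(x-y,t-s)\bar b(y)w(y,s)\,dy\big|$ by $C(t-s)^{-1/2}\|w(\cdot,s)\|_{L^\infty}$, with $C$ depending only on $\alpha L$) together with Lemma~7.7 of \cite{s3} gives a finite a priori bound on $\|u(\cdot,t)\|_{L^\infty}$ for $u_0\in BC(\mathbb R)$. Then I would run a Gronwall argument on the negative part, exactly in the style of \eqref{ineq}. Put $w=-u$, which solves the same linear equation with initial data $-u_0\le 0$, and set $w^+=\max(w,0)$. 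In the Duhamel identity for $w$ the free term $\int_{\mathbb R}G(x-y,t)(-u_0(y))\,dy$ is $\le 0$, while $\int_{\mathbb R}G(x-y,t-s)\bar b(y)w(y,s)\,dy\le \int_{\mathbb R}G(x-y,t-s)\bar b(y)w^+(y,s)\,dy\le C(t-s)^{-1/2}\|w^+(\cdot,s)\|_{L^\infty}$ (using $w\le w^+$ and $G,\bar b\ge 0$); hence
\[
\|w^+(\cdot,t)\|_{L^\infty}\ \le\ C\int_0^t\frac{\|w^+(\cdot,s)\|_{L^\infty}}{\sqrt{t-s}}\,ds,
\]
so $w^+\equiv 0$ by Lemma~7.7 of \cite{s3}, i.e. $u\ge 0$. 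Alternatively, one can obtain $u\ge 0$ by approximation: solve $(u_n)_t=(u_n)_{xx}+b_n(x)u_n$ with $b_n\in\Lambda(\alpha)$, $b_n\to\bar b$ in the weak\,$*$ sense and $u_n(\cdot,0)=u_0$, invoke the classical parabolic maximum principle to get $u_n\ge 0$, and pass to the locally uniform limit as in the proof of Theorem~\ref{wlposedness} and Lemma~\ref{weaksolismildsol}.

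I expect the only delicate point to be this nonnegativity step: since $\bar b$ is merely a measure, the pointwise parabolic maximum principle is not directly available, so one must argue through the integral formulation (or approximate by smooth coefficients) as above. The rest is elementary, resting only on the positivity of the heat kernel and on the fact that a continuous nonnegative nonzero function has strictly positive Gaussian average at every point.
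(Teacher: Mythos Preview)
Your argument is correct and lands on exactly the same inequality as the paper: $u(x,t)\ge \int_{\mathbb R}G(x-y,t)u_0(y)\,dy>0$. The paper reaches this by noting that the heat-equation solution $v(x,t)=\int G(x-y,t)u_0(y)\,dy$ is a subsolution of $u_t=u_{xx}+\bar b(x)u$ (since $v\ge 0$ and $\bar b\ge 0$ give $v_t-v_{xx}-\bar b v=-\bar b v\le 0$) and then invokes comparison; you instead read the inequality off the Duhamel formula after first securing $u\ge 0$. Your route is slightly more self-contained in that you justify the nonnegativity step explicitly (via the singular Gronwall on $(-u)^+$ or by smooth approximation), whereas the paper's subsolution argument tacitly relies on a comparison principle for the linear equation with measure coefficient. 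Either way the substance is the same.
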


\begin{proof}
Let $u(x,t,u_0)$ be the solution of
\[
\ u_t=u_{xx}+\bar{b}(x)u,
\]
with initial data $u_0,$ and let $v(x,t)$ be the solution of
\[
\ v_t=v_{xx}
\] with initial data $u_0(x).$

Since $\bar{b}(x)\geq 0$, we see that $v(x,t)$ is a sub-solution
of $u_t=u_{xx}+\bar{b}(x)u$ with initial data $u_0(x).$ Then we
get
\[ \ u(x,t)\geq v(x,t).
\]
{}From the classical theory of the heat equation, $v(x,t)>0$ for
$t>0.$ So we have $\Phi_t(u_0)=u(x,t,u_0)\geq v(x,t)>0.$
\end{proof}

Let $\Gamma=\{a(x)\in C(\mathbb{R})\cap L^\infty(\mathbb{R}),
a(x)=a(x+L)\}$. We equip $\Gamma$ with the local uniform topology,
which is also equivalent to the $L^\infty$ topology on $\Gamma$.
For any $\xi\geq 0$, define a linear operator $\mathcal{L}_t^\xi$
on $\Gamma$ by
\[
 \mathcal{L}_t^\xi(a)=e^{\xi x} \Phi_t(e^{-\xi x}a).
\]
{}From the definition and the properties of $\Phi_t$, we have

\begin{lem}
For any $t>0$ and $\xi\geq 0$, $\mathcal{L}_t^\xi:\Gamma
\rightarrow \Gamma$ is bounded, compact and strongly positive.

\end{lem}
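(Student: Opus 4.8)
The plan is to verify the three asserted properties of $\mathcal{L}_t^\xi$ — boundedness, compactness, and strong positivity — each in turn, reducing them to the previously established properties of $\Phi_t$ on the space $\mathbb{X}$ together with the periodicity structure of $\Gamma$. The key observation throughout is that conjugation by $e^{\xi x}$ is an (unbounded but) well-behaved intertwining map: if $a\in\Gamma$ then $e^{-\xi x}a\in\mathbb X$ (indeed it lies in $A_M^{-\xi}$-type sets for $M=\|a\|_{L^\infty}$), so $\Phi_t(e^{-\xi x}a)$ is defined, lies in $\mathbb X$, and multiplying back by $e^{\xi x}$ lands us in a periodic-coefficient setting again. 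The real content is checking that the output is genuinely periodic and that the quantitative estimates survive the conjugation.

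First I would prove $\mathcal L_t^\xi(\Gamma)\subset\Gamma$ and boundedness. Periodicity: since $\bar b$ is $L$-periodic and the equation $u_t=u_{xx}+\bar b(x)u$ is translation-covariant under $T_L$, the function $w(x,t):=e^{\xi L}u(x+L,t,e^{-\xi\cdot}a)$ solves the same linear equation with initial data $e^{\xi L}e^{-\xi(x+L)}a(x+L)=e^{-\xi x}a(x)$, hence by uniqueness equals $u(x,t,e^{-\xi\cdot}a)$; translating back this gives $[\mathcal L_t^\xi a](x+L)=[\mathcal L_t^\xi a](x)$, and continuity plus boundedness of $\mathcal L_t^\xi a$ on $[0,L]$ follow from the regularizing estimates for the linear Cauchy problem (the Duhamel/heat-kernel representation, as in Lemma \ref{weaksolismildsol} and Theorem \ref{wlposedness}). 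For the operator bound, $\|\mathcal L_t^\xi a\|_{L^\infty}=\sup_{x\in[0,L]}e^{\xi x}|u(x,t,e^{-\xi\cdot}a)|$, and a Gronwall-type estimate on the Duhamel formula — exactly of the form \eqref{est-rho}, with the exponential weight absorbed because $e^{-\xi\cdot}a$ grows only like $e^{\xi|x|}$ — shows $\|u(\cdot,t,e^{-\xi\cdot}a)\|$ in the weighted sup norm is controlled by $C(t)\|a\|_{L^\infty}$; linearity makes this a genuine operator norm bound.

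Next, compactness. This follows from the already-stated fact that $\Phi_t(A_M^\xi)$ is precompact in $\mathbb X$ with respect to the local uniform topology (the immediately preceding lemma): given a bounded set $B\subset\Gamma$, say $\|a\|_{L^\infty}\le M$ for all $a\in B$, the set $\{e^{-\xi\cdot}a : a\in B\}$ lies in $A_M^{-\xi}$ (up to harmless constants), so $\{\Phi_t(e^{-\xi\cdot}a): a\in B\}$ is precompact; multiplying by the fixed continuous function $e^{\xi x}$ is continuous for the local uniform topology, and on the periodic subspace $\Gamma$ the local uniform topology coincides with the $L^\infty$ topology, so $\mathcal L_t^\xi(B)$ is precompact in $\Gamma$ with its norm topology. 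Strong positivity is the quickest: if $a\in\Gamma$, $a\ge 0$, $a\not\equiv 0$, then $e^{-\xi\cdot}a\ge 0$ and $\not\equiv 0$, so by the strong-order-preserving lemma for $\Phi_t$ just proved, $\Phi_t(e^{-\xi\cdot}a)(x)>0$ for all $x$, whence $[\mathcal L_t^\xi a](x)=e^{\xi x}\Phi_t(e^{-\xi\cdot}a)(x)>0$ everywhere; positivity of the operator itself (mapping the nonnegative cone into itself) is the same computation without the strict sign.

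The main obstacle I anticipate is the periodicity/closure argument for the output — i.e., genuinely justifying $\mathcal L_t^\xi(\Gamma)\subseteq\Gamma$ — because it requires the uniqueness of the mild solution in the class $\mathbb X$ (not merely in $L^\infty$), and one must confirm that the translate $e^{\xi L}u(\cdot+L,t,\cdot)$ is admissible in the uniqueness class. Everything else (the Gronwall estimates, the equivalence of topologies on $\Gamma$, the transfer of precompactness) is routine given the lemmas already in hand. I would therefore organize the proof so that the periodicity claim is dispatched first and cleanly, citing the uniqueness statement for the linearized equation on $\mathbb X$, and then treat boundedness, compactness, and strong positivity as short consequences.
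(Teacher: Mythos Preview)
Your proposal is correct and is essentially a detailed unpacking of exactly what the paper intends: in the paper this lemma is asserted without proof, as an immediate consequence of the preceding properties of $\Phi_t$ (translation covariance, strong order-preservation, and precompactness of $\Phi_t(A_M^\xi)$), together with the definition of $\mathcal{L}_t^\xi$. Your verification that $\mathcal{L}_t^\xi(\Gamma)\subset\Gamma$ via linearity and property~(II), the transfer of precompactness through multiplication by $e^{\xi x}$ and the equivalence of the local-uniform and $L^\infty$ topologies on $\Gamma$, and the deduction of strong positivity from the strong-order-preserving lemma are precisely the steps the paper leaves implicit.
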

\begin{pro}\label{uniqueeigenvalue}
$\psi$ is a principal eigenfunction of $-L_{\lambda,\bar{b}}$ if
and only if $\psi$ is a principal eigenfunction of
$\mathcal{L}_t^\lambda$. $\mu(\lambda,\bar{b})$ is a principal
eigenvalue of $L_{\lambda,\bar{b}}$ if and only if
$\exp(-\mu(\lambda,\bar{b})+\lambda^2)$ is a principal eigenvalue
of $\mathcal{L}_t^\lambda$. Consequently the principal eigenvalue
of $L_{\lambda,\bar{b}}$ is unique.
\end{pro}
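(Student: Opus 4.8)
The plan is to exploit the change of variables $u(x,t)=e^{-\lambda x}v(x,t)$, under which the linearized equation $u_t=u_{xx}+\bar{b}(x)u$ becomes, in the weak sense, $v_t=v_{xx}-2\lambda v_x+(\lambda^2+\bar{b}(x))v=(L_{\lambda,\bar{b}}+\lambda^2)v$, together with the Krein--Rutman theorem applied to the compact strongly positive operator $\mathcal{L}_t^\lambda$ on $\Gamma$ furnished by the preceding lemma. Throughout one may fix $\lambda>0$; the negative-direction eigenvalue $\mu(-\lambda,\bar{b})$ is reduced to this case by the reflection $x\mapsto -x$, which replaces $\bar{b}$ by the (still admissible) reflected measure.

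For the direction ``$-L_{\lambda,\bar{b}}\Rightarrow\mathcal{L}_t^\lambda$'': let $\psi\in E_L$ satisfy $-\psi''+2\lambda\psi'-\bar{b}\psi=\mu(\lambda,\bar{b})\psi$ weakly, and set $u(x,t):=e^{(\lambda^2-\mu(\lambda,\bar{b}))t}e^{-\lambda x}\psi(x)$; note $e^{-\lambda x}\psi\in\mathbb{X}$, so $\Phi_t$ applies. I claim $u$ is the weak (hence mild) solution of the linearized equation with data $e^{-\lambda x}\psi$. To see this, test the weak formulation against $\eta(x,t)$ and insert $\varphi(x):=e^{-\lambda x}\eta(x,t)$ (which lies in $C_0^\infty(\mathbb{R})$ for each $t$, as $\eta(\cdot,t)$ has compact support) into the weak eigenvalue identity for $\psi$; using the elementary relation $-\varphi''-2\lambda\varphi'-\bar{b}\varphi=e^{-\lambda x}(-\eta_{xx}+\lambda^2\eta-\bar{b}\eta)$ and one integration by parts in $t$, one verifies that $u$ solves the weak equation, and uniqueness of mild solutions gives $u(\cdot,t)=\Phi_t(e^{-\lambda x}\psi)$. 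Hence $\mathcal{L}_t^\lambda\psi=e^{\lambda x}\Phi_t(e^{-\lambda x}\psi)=e^{(\lambda^2-\mu(\lambda,\bar{b}))t}\psi$, so $\psi$ is a positive eigenfunction of $\mathcal{L}_t^\lambda$; since $\mathcal{L}_t^\lambda$ is compact and strongly positive, Krein--Rutman forces $\psi$ to be its principal eigenfunction and $e^{(\lambda^2-\mu(\lambda,\bar{b}))t}$ (which for $t=1$ is the $\exp(-\mu(\lambda,\bar{b})+\lambda^2)$ of the statement) to be its principal eigenvalue. This computation does not use positivity of $\psi$, so in fact every eigenfunction of $-L_{\lambda,\bar{b}}$ at the eigenvalue $\mu(\lambda,\bar{b})$ maps to an eigenfunction of $\mathcal{L}_t^\lambda$ at the same eigenvalue.

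Conversely, let $a\in\Gamma$ be the principal eigenfunction of $\mathcal{L}_t^\lambda$, so $a>0$ by Krein--Rutman. The semigroup identity $\mathcal{L}_{t+s}^\lambda=\mathcal{L}_t^\lambda\circ\mathcal{L}_s^\lambda$ (inherited from $\Phi_t$), with continuity in $t$, forces the eigenvalue of $a$ to be $e^{\gamma t}$ for some $\gamma\in\mathbb{R}$; unwinding the definition of $\mathcal{L}_t^\lambda$ this says $u(x,t):=e^{\gamma t}e^{-\lambda x}a(x)$ is the weak solution with data $e^{-\lambda x}a$. Testing the weak formulation against $\eta(x,t)=\chi(t)e^{\lambda x}\varphi(x)$ with $\varphi\in C_0^\infty(\mathbb{R})$, $\chi\in C_0^\infty((0,\infty))$, and running the previous computation in reverse, one obtains $-a''+2\lambda a'-\bar{b}a=(\lambda^2-\gamma)a$ in the weak sense; elliptic regularity (as already noted for the eigenvalue problem in Section 4) gives $a\in H_{loc}^1(\mathbb{R})$, so $a\in E_L$ is by definition a principal eigenfunction of $-L_{\lambda,\bar{b}}$ with eigenvalue $\lambda^2-\gamma$. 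Thus $\mu\mapsto e^{(\lambda^2-\mu)t}$ is a bijection between principal eigenvalues of $-L_{\lambda,\bar{b}}$ and that of $\mathcal{L}_t^\lambda$, the latter being unique and simple by Krein--Rutman; hence $\mu(\lambda,\bar{b})$ is unique, and by the last observation of the previous paragraph its eigenspace is one-dimensional, which also completes the proof of Proposition \ref{principaleigenvalueisunique}.

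The main obstacle is the first step: one must check carefully that the separated profile $e^{(\lambda^2-\mu)t}e^{-\lambda x}\psi$ is genuinely a weak (hence mild) solution of the measure-coefficient linear equation, so that the stationary weak eigenvalue identity and the parabolic semiflow $\Phi_t$ may legitimately be matched. The one real nuisance there is that $e^{\pm\lambda x}$ is not compactly supported, which is why the test functions must be routed through $\varphi=e^{-\lambda x}\eta\in C_0^\infty(\mathbb{R})$; once this bookkeeping is done, the remainder is a direct application of Krein--Rutman to $\mathcal{L}_t^\lambda$, whose compactness and strong positivity are already established.
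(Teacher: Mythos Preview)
Your proof is correct and follows essentially the same route as the paper: conjugate by $e^{\lambda x}$ to pass between the stationary eigenvalue identity and the parabolic flow $\Phi_t$, verify that the separated profile $e^{(\lambda^2-\mu)t}e^{-\lambda x}\psi$ is the (weak, hence mild) solution of the linearized equation, and then invoke Krein--Rutman on the compact strongly positive operator $\mathcal{L}_t^\lambda$. Your treatment is in fact more complete than the paper's own proof, which only sketches the forward implication and leaves the converse (and the simplicity) implicit; your use of the semigroup law to force the eigenvalue shape $e^{\gamma t}$ and your careful routing of test functions through $\varphi=e^{-\lambda x}\eta\in C_0^\infty(\mathbb{R})$ fill in exactly the steps the paper elides.
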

\begin{proof}
Since $\psi$ is a principal eigenfunction of
$L_{\lambda,\bar{b}},$
\[ \
\psi''-2\lambda
\psi'+\bar{b}(x)\psi=-\mu(\lambda,\bar{b})\psi\,\,\,\,\,\,\,
\hbox{ in the weak sense}.
\]
Let$\phi=e^{-\lambda x}\psi$. Then the above formula is equivalent
to
\[ \
\phi''+\bar{b}(x)\phi=(-\mu(\lambda,\bar{b})+\lambda^2)\phi
\,\,\,\,\,\,\,\hbox{ in the weak sense.}
\]
\fbox{Change begins} It is easy to show that
$\exp((\mu(\lambda,\bar{b})+\lambda^2)t)\psi$ is the mild solution
of $u_t=u_{xx}+\bar{b}u$ with $u_0(x)=\psi(x).$ \fbox{Change ends}
Hence,
\[\Phi_t(\phi)=\exp((-\mu(\lambda,\bar{b})+\lambda^2)t)\phi .\]
Finally, we have \[\mathcal{L}_t^\lambda(\psi)=
\exp((-\mu(\lambda,\bar{b})+\lambda^2)t)\psi. \] Since
$\mathcal{L}_t^\lambda$ is a strongly positive compact operator,
by the Krein-Rutman theory, its principal eigenvalue is unique,
hence that of $L_{\lambda,\bar{b}}.$ This completes our proof.
\end{proof}
\begin{Rema} Combining Proposition \ref{uniqueeigenvalue} and Lemma \ref{convergenceofeigenvalue}, we know that Proposition
\ref{principaleigenvalueisunique} holds.
\end{Rema}
\begin{pro}
The principal eigenvalue of $-L_{0,\bar{b}}$ is negative, and then
the principal eigenvalue of $\mathcal{L}_t^0$ is lager than $1$
for any $t>0$.
\end{pro}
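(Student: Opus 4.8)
The plan is to reduce everything to the sign of the principal eigenvalue $\mu(0,\bar b)$ of $-L_{0,\bar b}$, and then read off the statement about $\mathcal{L}_t^0$ from Proposition~\ref{uniqueeigenvalue}.

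\textbf{Step 1: $\mu(0,\bar b)<0$.} I would argue by approximation. Pick $\{b_n\}\subset\Lambda(\alpha)$ with $b_n\to\bar b$ in the weak\,$*$ sense, which is possible by Definition~\ref{DefofLamdaBarAlpha}. Lemma~\ref{Boundednessofmu} gives the uniform bound $\mu(0,b_n)\le-\alpha$ for every $n$. Applying Proposition~\ref{convergenceofeigenvalue} with the constant sequence $\lambda_n\equiv 0$ yields $\mu(0,b_n)\to\mu(0,\bar b)$, and letting $n\to\infty$ we conclude $\mu(0,\bar b)\le-\alpha<0$, since $\alpha>0$ by hypothesis. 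One could instead attempt a direct variational argument: testing $-\psi''-\bar b\psi=\mu(0,\bar b)\psi$ against the principal eigenfunction $\psi\in E_L$ gives $\mu(0,\bar b)\int_{[0,L)}\psi^2\,dx=\int_{[0,L)}\psi'^2\,dx-\int_{[0,L)}\bar b\,\psi^2\,dx$, and $\int_{[0,L)}\bar b\,\psi^2\,dx\ge(\min\psi)^2\alpha L>0$ because $\bar b$ is a nonnegative measure of total mass $\alpha L$ on $[0,L)$ and $\psi$ is continuous and strictly positive; but to see that the whole right-hand side is actually negative one still needs a Lemma~\ref{IntegralInequality}-type inequality or the approximation above, so I would present the approximation route as the clean one.

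\textbf{Step 2: the principal eigenvalue of $\mathcal{L}_t^0$ exceeds $1$.} By Proposition~\ref{uniqueeigenvalue} applied with $\lambda=0$, the principal eigenvalue of $\mathcal{L}_t^0$ equals $\exp\big((-\mu(0,\bar b)+0^2)t\big)=\exp(-\mu(0,\bar b)\,t)$. Since $\mu(0,\bar b)<0$ by Step~1 and $t>0$, the exponent $-\mu(0,\bar b)\,t$ is strictly positive, hence $\exp(-\mu(0,\bar b)\,t)>1$, which is the assertion.

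\textbf{Main obstacle.} There is no serious difficulty here; the proposition is a short corollary of material already in hand. The only two points requiring a little care are: (i) not attempting to push a variational inequality directly through a measure coefficient, but rather invoking the already-established weak\,$*$-continuity of the principal eigenvalue (Proposition~\ref{convergenceofeigenvalue}); and (ii) keeping track of the factor $t$ in the $\mu$-to-$\mathcal L_t^\lambda$ correspondence of Proposition~\ref{uniqueeigenvalue}, since it is precisely this factor together with the strict sign of $\mu(0,\bar b)$ that produces the strict inequality $>1$.
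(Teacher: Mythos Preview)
Your proof is correct, but the paper's argument for Step~1 is considerably more direct. Instead of approximating by smooth $b_n$ and passing to the limit, the paper simply integrates the eigenvalue identity $-\psi''-\bar b\,\psi=\mu(0,\bar b)\,\psi$ over one period $[0,L)$: the term $\int_{[0,L)}\psi''\,dx$ vanishes by periodicity, leaving $\mu(0,\bar b)=-\dfrac{\int_{[0,L)}\bar b(x)\psi(x)\,dx}{\int_{[0,L)}\psi(x)\,dx}$, which is negative since $\psi>0$ is continuous and $\bar b$ is a nonzero nonnegative measure. Note that this is testing the equation against the constant function $1$, not against $\psi$ as in the variational formula you sketched; that is why no $\psi'^2$ term appears and no Lemma~\ref{IntegralInequality}-type estimate is needed. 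Your approximation route is longer but has the small bonus of producing the quantitative bound $\mu(0,\bar b)\le-\alpha$, which the paper's one-line computation does not give explicitly. Step~2 matches the paper exactly.
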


\begin{proof}
Integrating from $0$ to $L$, we get \[ \
\mu(0,\bar{b})=-\frac{\int_{[0,L)}\bar{b}(x)u(x)dx}{\int_0^Lu(x)dx}<0.
\]

Furthermore, the principal eigenvalue of $\mathcal{L}_t^0$ is
$e^{-\mu(0,\bar{b})t}>1$. This completes the proof.
\end{proof}

Summarizing, for any $t>0$, $\Phi_t$ has the following properties:
\begin{enumerate}
\item[{\rm(I)}] $\Phi_t$ is strongly order-preserving in the sense
that for any $u_0\in BC(\mathbb R)$ with $u_0(x)\geq 0, \forall
x\in \mathbb{R}, u\not\equiv 0$, $\Phi_t(u_0)(x)>0,$ for
$x\in\mathbb R.$

\item[{\rm(II)}] $\Phi_t(T_L(u_0))=T_L(\Phi_t(u_0))$,  where $T_L$
is a shift operator with $T_L(u)(x)=u(x-L)$ .

\item[{\rm(III)}]  For any $t>0$ and $\xi\geq 0$, the operator
$\mathcal{L}_t^\xi:\Gamma \rightarrow \Gamma$ is bounded, compact
and strongly positive. Moreover, the principal eigenvalue of
$\mathcal{L}_t^0$ is larger than 1.

\end{enumerate}
In the following lemma, we will show that $Q$ can be dominated by
$\Phi$ from above.
 \begin{lem} Let $Q_t$ and $\Phi_t$ be defined as above.
Then for any $u_0$ with $u_0(x)\geq 0,$ we have $Q_t(u_0)\leq
\Phi_t(u_0)$ for any $t>0, x\in \mathbb R.$
\end{lem}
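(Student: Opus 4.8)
The plan is to exploit the elementary inequality $\bar b(x)\,u(1-u)\le \bar b(x)\,u$, which holds whenever $\bar b\ge0$ since $u(1-u)-u=-u^2\le0$ with no sign restriction on $u$. Thus $Q_t(u_0)$ ought to be a subsolution of the linear problem $u_t=u_{xx}+\bar b(x)u$. Because $\bar b$ is only a measure, I would not invoke a pointwise maximum principle but instead argue through the mild (Duhamel) formulations. Writing $u:=Q_t(u_0)$ and $v:=\Phi_t(u_0)$, Lemma \ref{weaksolismildsol} and the corresponding representation for $\Phi_t$ (obtained just as in Theorem \ref{wlposedness}) show that $u$ and $v$ satisfy the same integral identity, with the same first term $\int_{\mathbb R}G(x-y,t)u_0(y)\,dy$, but with source term $\bar b(y)u(y,s)(1-u(y,s))$ for $u$ and $\bar b(y)v(y,s)$ for $v$, the spatial integrals against $\bar b$ being understood as in \eqref{baslinearfunctional}. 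Subtracting and setting $w:=v-u$, the common first term cancels, and the identity $v-u(1-u)=w+u^2$ together with $u^2\ge0$, $\bar b\ge0$, $G>0$ yields
\[
w(x,t)=\int_0^t\!\!\int_{\mathbb R}G(x-y,t-s)\,\bar b(y)\bigl(w(y,s)+u(y,s)^2\bigr)\,dy\,ds\ \ge\ \int_0^t\!\!\int_{\mathbb R}G(x-y,t-s)\,\bar b(y)\,w(y,s)\,dy\,ds.
\]

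From this the plan is a singular Gronwall estimate on the negative part of $w$. Since $u_0$ is bounded, both $u(\cdot,t)$ and $v(\cdot,t)$ are bounded uniformly on every interval $[0,T]$ — for $u$ by the $L^\infty$ bound in the proof of Theorem \ref{wlposedness}, for $v$ by the same argument applied to the linear flow — so $\rho(t):=\|w^-(\cdot,t)\|_{L^\infty(\mathbb R)}$ is finite on $[0,T]$, where $w^-:=\max\{-w,0\}$. Bounding $w(y,s)\ge-\rho(s)$ and applying Lemma \ref{lem:A} with $\eta(y)=G(x-y,t-s)$, together with the Gaussian tail estimate $\sum_{k\in\mathbb Z}\max_{0\le y\le L}G(x-y-kL,t-s)\le C_0(t-s)^{-1/2}$ — the very estimate already used in the proof of Lemma \ref{weaksolismildsol} — I would obtain $\rho(t)\le \int_0^t C(t-s)^{-1/2}\rho(s)\,ds$ for a constant $C$ depending only on $\alpha$, $L$ and $T$. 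Since $\rho(0)=0$, Lemma 7.7 of Alfaro, Hilhorst and Matano \cite{s3} forces $\rho\equiv0$ on $[0,T]$; as $T$ is arbitrary, $w\ge0$ everywhere, that is, $Q_t(u_0)\le\Phi_t(u_0)$ for all $t>0$ and $x\in\mathbb R$.

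The only genuinely delicate point is the bookkeeping imposed by the measure coefficient $\bar b$: one must check that $\int_{\mathbb R}G(x-y,t-s)\bar b(y)w(y,s)\,dy$ is well defined and is controlled, uniformly in $x$, by a constant times $(t-s)^{-1/2}\rho(s)$. This is exactly what Lemma \ref{lem:A} provides, so no ideas beyond those already in the paper are required; the remainder is the routine singular Gronwall loop.
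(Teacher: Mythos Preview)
Your argument is correct. The paper's proof takes a shorter route: it sets $w=u-v$ (the opposite sign convention to yours), notes $0\le u\le 1$ from the weak comparison principle, so that formally $w_t\le w_{xx}+\bar b\,w$, and then simply invokes ``the classical comparison principle of heat equation'' to conclude $w\le 0$. Your approach instead works entirely at the level of the mild (Duhamel) formulation and closes with a singular Gronwall estimate on $\|w^-\|_{L^\infty}$. Both arguments rest on the same elementary inequality $u(1-u)\le u$, but yours is more careful about the fact that $\bar b$ is only a measure: the paper's differential inequality is formal in that setting, and the ``classical comparison principle'' it appeals to is not literally classical here. Your route buys rigor at the cost of a few extra lines; the paper's route buys brevity at the cost of leaving the measure-coefficient comparison step unjustified. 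As a minor bonus, your argument never uses $u\le 1$ (only $u^2\ge 0$), so it goes through for any bounded nonnegative $u_0$, not just $u_0\in C(\mathbb R;[0,1])$.
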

\begin{proof}
We recall that $Q_t(u_0)(x)$ is the solution $u(x,t,u_0)$ of the
equation \[ \ u_t=u_{xx}+\bar{b}(x)u(1-u)
\] with initial data $u_0$ and $\Phi_t(u_0)(x)$ is the solution
$v(x,t,u_0)$ of the equation \[ \ v_t=v_{xx}+\bar{b}(x)v
\] with initial data $u_0.$

{}From the weak comparison principle, $0\leq u\leq 1.$ Let
$w=u-v.$ Then \[ \ w_t\leq w_{xx}+\bar{b}w.
\]
{}From the classical comparison principle of heat equation, we
have $w\leq 0.$ So $Q_t(u_0)\leq \Phi_t(u_0)$ for any $t>0, x\in
\mathbb R.$\end{proof}

On the other hand, we want to find some linear operator dominate
$Q$ from below.
\begin{lem}
For any $\epsilon$ with $0<\epsilon<1$, let $\Phi^\epsilon_t$ be
an operator such that
\[ \ \Phi^\epsilon_t (u_0)=u^{\epsilon}(x,t,u_0)
\] where $u^{\epsilon}(x,t,u_0)$ is the solution of \[
\ u^{\epsilon}_t=u^\epsilon_{xx}+(1-\epsilon)\bar{b}(x)u^\epsilon
\] with initial data $u_0\in BC(\mathbb{R}).$ Then $\ \Phi^\epsilon_t$ also has the properties (I-III). Moreover, for any given $t_0>0,$
$\Phi^\epsilon_{t_0}(u_0)< Q_{t_0}(u_0)$ provided $u_0(x)\geq 0$
and $\max|u_0|$ is small enough.
\end{lem}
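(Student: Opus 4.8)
The plan is to handle the two assertions separately. For the first, note that $(1-\epsilon)\bar{b}$ is again a nonnegative $L$-periodic Borel measure; in fact $(1-\epsilon)\bar{b}\in\overline{\Lambda}\big((1-\epsilon)\alpha\big)$. Hence every argument used above for $\Phi_t$ carries over verbatim to $\Phi_t^\epsilon$ with $\bar{b}$ replaced by $(1-\epsilon)\bar{b}$: the well-posedness and continuous dependence on the sets $\mathbb{X}_M^\xi$, the strong order-preserving property (I) via comparison with the heat equation, the translation invariance (II), and the boundedness, compactness and strong positivity of $\mathcal{L}_t^{\epsilon,\xi}(a):=e^{\xi x}\Phi_t^\epsilon(e^{-\xi x}a)$ in (III). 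The only point that genuinely uses the coefficient is that the principal eigenvalue of $\mathcal{L}_t^{\epsilon,0}$ exceeds $1$; this again follows by integrating the eigenvalue equation $-\psi''-(1-\epsilon)\bar{b}\psi=\mu_\epsilon(0,\bar{b})\psi$ over $[0,L)$, which gives $\mu_\epsilon(0,\bar{b})=-(1-\epsilon)\int_{[0,L)}\bar{b}(x)\psi(x)dx\big/\int_{0}^{L}\psi(x)dx<0$, whence the principal eigenvalue $e^{-\mu_\epsilon(0,\bar{b})t}>1$ for $t>0$.

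For the inequality $\Phi_{t_0}^\epsilon(u_0)<Q_{t_0}(u_0)$, I would first fix a bound for the linear flow: from the Duhamel representation together with Lemma~\ref{lem:A}, a Gronwall argument exactly as in the proof of Theorem~\ref{wlposedness} (using Lemma~7.7 of \cite{s3}) yields a constant $C(t_0)$ with $\|\Phi_t(u_0)\|_{L^\infty(\mathbb R)}\leq C(t_0)\|u_0\|_{L^\infty(\mathbb R)}$ for all $t\in[0,t_0]$. Now choose $\delta>0$ so small that $C(t_0)\delta<\epsilon$, and assume $0\leq u_0$ with $\|u_0\|_{L^\infty(\mathbb R)}\leq\delta$. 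Since $Q_t(u_0)\leq\Phi_t(u_0)$ (proved just above) and $Q_t(u_0)\geq 0$, the solution $u:=Q_t(u_0)$ satisfies $0\leq u(x,t)<\epsilon$ for all $x\in\mathbb R$ and all $t\in[0,t_0]$.

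The key algebraic observation is the splitting $\bar{b}(x)u(1-u)=(1-\epsilon)\bar{b}(x)u+\bar{b}(x)u(\epsilon-u)$, in which the last term is a nonnegative measure-valued source as long as $0\leq u<\epsilon$. Writing $u^\epsilon:=\Phi_t^\epsilon(u_0)$ and $w:=u-u^\epsilon$, and subtracting the two mild-solution formulas, one obtains the linear Volterra identity
\[
w(x,t)=\int_0^t\!\!\int_{\mathbb R}G(x-y,t-s)(1-\epsilon)\bar{b}(y)w(y,s)\,dy\,ds+h(x,t),
\]
where $h(x,t):=\int_0^t\!\!\int_{\mathbb R}G(x-y,t-s)\bar{b}(y)u(y,s)\big(\epsilon-u(y,s)\big)\,dy\,ds\geq 0$, both integrals being understood in the sense of \eqref{baslinearfunctional}. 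Since the kernel $(1-\epsilon)\bar{b}\,G$ is nonnegative, iterating this identity (the Neumann series whose sum is the unique solution $w$, as in the construction of mild solutions) gives $w\geq h\geq 0$. Finally, for every $s>0$ the solution $u(\cdot,s)$ is strictly positive, while $\bar{b}\not\equiv 0$ and $\epsilon-u(\cdot,s)>0$, so $\bar{b}(y)u(y,s)\big(\epsilon-u(y,s)\big)\,dy$ is a nonzero nonnegative measure; hence $h(x,t_0)>0$ for every $x$, and therefore $w(x,t_0)>0$, i.e.\ $\Phi_{t_0}^\epsilon(u_0)<Q_{t_0}(u_0)$.

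I expect the main technical point to be the passage from ``$u$ is a supersolution of the $\Phi^\epsilon$-equation'' to the pointwise comparison and then to the \emph{strict} inequality, because $\bar{b}$ is only a measure and a direct appeal to the strong maximum principle is awkward; the Volterra/Neumann-series argument above is precisely what makes this rigorous within the framework already developed in Sections~3 and~5.
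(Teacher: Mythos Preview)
Your argument is correct and follows the same overall strategy as the paper: first observe that $(1-\epsilon)\bar b$ again sits in some $\overline{\Lambda}(\alpha')$ so that (I)--(III) carry over, then use continuous dependence to force the nonlinear solution $u=Q_t(u_0)$ to stay below $\epsilon$ on $[0,t_0]$, and finally deduce that $u$ is a supersolution of the $\Phi^\epsilon$-equation.

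The difference lies in how the comparison step is made precise. The paper argues formally at the level of the PDE: writing $w=u-u^\epsilon$, it notes that when $u<\epsilon$ one has $\bar b\,u(1-u)>(1-\epsilon)\bar b\,u$, hence $w_t>w_{xx}+(1-\epsilon)\bar b\,w$, and then invokes comparison with the zero solution to conclude $w\geq 0$. Your route via the Duhamel/Volterra identity and the Neumann series is a cleaner way to make this rigorous when $\bar b$ is only a measure, and it has the additional virtue of actually yielding the \emph{strict} inequality claimed in the statement (the paper's proof, as written, only reaches $w\geq 0$). One small point: your strict-positivity argument for $h(\cdot,t_0)$ tacitly assumes $u_0\not\equiv 0$; you may want to note that the case $u_0\equiv 0$ is trivial (both sides vanish) and is excluded from the strict inequality.
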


\begin{proof}
We recall that $Q(x,t,u_0)$ is the solution of
$u_t=u_{xx}+\bar{b}(x)u(1-u)$ with initial data $u_0(x)$ and
$\Phi^\epsilon_{t}(u_0)$ is the solution of
$u^\epsilon_t=u^\epsilon_{xx}+(1-\epsilon)\bar{b}(x)u^\epsilon$
with initial data $u_0.$ Let $w=u-u^\epsilon.$ Then \[ \
w_t=w_{xx}+\bar{b}((1-u)u-(1-\epsilon)u^\epsilon).
\]
Since $t_0$ and $\epsilon$ are given, from the continuous
dependency of $u(x,t)$ on the initial data, if $\max u_0$ is
sufficiently small, then $u(x,t)<\epsilon,$ for $t\in[0,t_0].$

Then we have
\[ \
w_t=w_{xx}+\bar{b}((1-u)u-(1-\epsilon)u^\epsilon)>w_{xx}+\bar{b}(1-\epsilon)w.
\]
It can be rewritten as
\[
\ w_t>w_{xx}.
\]
Hence, $w=u-u^\epsilon\geq 0,$ for $x\in \mathbb R ,t\in[0,t_0].$
\end{proof}

\begin{proof}[Proof of Theorem \ref{cstarbequalcestarb}] To prove this theorem, we use the
results of  Weinberger \cite{s8} or the more abstract results of
Liang, Zhao \cite{s7}. In fact, Theorem \ref{them5.6} shows the
existence of $c^*(\bar{b})$ . Moreover, for any $t>0,$ we can
check that $\Phi_t$ satisfies the hypotheses of Theorem 2.5 in
\cite{s8} and for any $1>\epsilon>0,$ $\Phi^{\epsilon}_t$
satisfies the hypotheses of Theorem 2.4 in \cite{s8}. Moreover,
for any $\lambda\geq 0,$ the principal eigenvalue of
$u_{xx}-2\lambda u_x+(1-\epsilon)\bar{b}(x)u$ under the
periodicity conditions converges to the eigenvalue of
$u_{xx}-2\lambda u_x+\bar{b}(x)u$ under the same periodicity
conditions as $\epsilon\to 0.$ Hence we get
\[
c^*(\bar{b})=c_e^*(\bar{b}),\,\,\,\bar{b}\in\overline{\Lambda}(\alpha).\]
Similarly
\[
\tilde{c}^*(\bar{b})=\tilde{c}_e^*(\bar{b}),\,\,\,\bar{b}\in\overline{\Lambda}(\alpha).\]
To prove $c^*(\bar{b})=\tilde{c}^*(\bar{b}),$ we first prove that,
for smooth $b\in\Lambda(\alpha),$ $c^*(b)=\tilde{c}^*(b).$ Then by
using Proposition \ref{convergenceofeigenvalue} and the result
that $c^*(\bar{b})=c_e^*(\bar{b}),$ we get
$c^*(\bar{b})=\tilde{c}^*(\bar{b}).$

For smooth $b\in\Lambda(\alpha),$ consider the following two
operators: $-L_{\lambda,b}\psi=-\psi''+2\lambda\psi'-b\psi$ and
$-L_{-\lambda,b}\psi=-\psi''-2\lambda\psi'-b\psi.$ They are
adjoint operators, so they have the same principal eigenvalue
which means $\mu(\lambda,b)=\mu(-\lambda,b).$ By Definition
\ref{Defofcestar}, it holds that $c_e^*(b)=\tilde{c}_e^*(b).$
\end{proof}

%\subsection{Proof of $c^*(\bar{b})=\tilde{c}^*(\bar{b})$}
\subsection{Proof of Theorem \ref{cestarhlargerthancestarb} and
\ref{Themforspreadingspeed}}

Now we are ready to prove Theorem \ref{cestarhlargerthancestarb}
and \ref{Themforspreadingspeed}.

\begin{proof}[Proof of Theorem \ref{cestarhlargerthancestarb}]
By Lemma \ref{cestarhissupofcestarbbar},
\[
\
c_e^*(h)=\sup_{\bar{b}\in\overline{\Lambda}(\alpha)}c_e^*(\bar{b}).
\]
On the other hand, in the previous subsection, we have proven that
$c_e^*(\bar{b})=c^*(\bar{b})$ for
$\bar{b}\in\overline{\Lambda}(\alpha).$ Since
$h\in\overline{\Lambda}(\alpha),$ we get
\[
\
c^*(h)=\sup_{b\in\Lambda(\alpha)}c^*(b)=\max_{\bar{b}\in\overline{\Lambda}(\alpha)}c^*(\bar{b}).
\]

\end{proof}
\begin{proof}[Proof of Theorem \ref{Themforspreadingspeed}]
Theorem \ref{them5.6} implies that
\[
\
c^{**}(\bar{b})=c^*(\bar{b}),\,\,\,\tilde{c}^{**}(\bar{b})=\tilde{c}^*(\bar{b}).
\] By Theorem \ref{cstarbequalcestarb},
$c^*(\bar{b})=\tilde{c}^*(\bar{b}).$ Consequently,
\[
\ c^{**}(\bar{b})=\tilde{c}^{**}(\bar{b}).
\]
\end{proof}

\section{Proof of the lemmas}

In this section, we prove the following technical lemmas on the
equicontinuity of solutions of Cauchy problem
\eqref{equationinsection3}. These lemmas have been used in
Sections 3 and 5.

\begin{lem}\label{EquicontinuousForX}
Let $u(x,t,u_0,b)$ be the solution of \eqref{equationinsection3}
with $\bar{b}$ replaced by a smooth $b\in\Lambda(\alpha)$ with
initial data $u_0\in C(\mathbb R)\cap L^\infty(\mathbb R).$ Then
for any $\epsilon, \,M>0,$
$\{u(x,t,u_0,b)\}_{t\geq\epsilon,\|u_0\|\leq M,
b\in\Lambda(\alpha)} $ is uniformly equicontinuous in $x\in
\mathbb{R}$.
\end{lem}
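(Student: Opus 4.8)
The statement to prove is Lemma 6.1 (\texttt{EquicontinuousForX}): for a smooth periodic coefficient $b\in\Lambda(\alpha)$, the solutions $u(x,t,u_0,b)$ of the Cauchy problem are uniformly equicontinuous in $x$, uniformly over $t\geq\epsilon$, $\|u_0\|_{L^\infty}\leq M$, and $b\in\Lambda(\alpha)$.

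\medskip

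The plan is to start from the mild (Duhamel) representation
\[
u(x,t)=\int_{\mathbb R}G(x-y,t)u_0(y)\,dy+\int_0^t\int_{\mathbb R}G(x-y,t-s)\,b(y)\,u(y,s)\big(1-u(y,s)\big)\,dy\,ds,
\]
and estimate the modulus of continuity of each of the two terms separately. First I would record the a priori bound $0\le u(x,t)\le \max\{M,1\}=:K$ coming from the comparison principle, so that the nonlinearity $f(u)=u(1-u)$ is bounded: $|f(u)|\le K(1+K)=:K_1$ on the relevant range. For the linear (heat-semigroup) term, the key point is the smoothing of the Gaussian kernel: for $t\ge\epsilon$ one has $\big|\int_{\mathbb R}(G(x_1-y,t)-G(x_2-y,t))u_0(y)\,dy\big|\le \|u_0\|_{L^\infty}\int_{\mathbb R}|G(x_1-y,t)-G(x_2-y,t)|\,dy$, and $\int_{\mathbb R}|G(x_1-y,t)-G(x_2-y,t)|\,dy\to 0$ as $|x_1-x_2|\to 0$ uniformly for $t\ge\epsilon$ (it is bounded by $C|x_1-x_2|/\sqrt{\epsilon}$, via $\partial_x G$). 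This handles the first term with a modulus depending only on $M$ and $\epsilon$.

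\medskip

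The second term is where the measure-theoretic bookkeeping from Lemma \ref{lem:A} enters and where I expect the main obstacle to lie: although here $b$ is smooth, we need the estimate to be \emph{uniform over all $b\in\Lambda(\alpha)$}, so we cannot use any bound on $\|b\|_{L^\infty}$ or $\|b\|_{C^1}$ — only the normalization $\int_{[0,L)}b=\alpha L$ and periodicity. The device is exactly the one used repeatedly in Sections 3 and 5: writing $\int_{\mathbb R}G(x-y,\tau)b(y)\varphi(y)\,dy=\sum_{k\in\mathbb Z}\int_0^L G(x-y-kL,\tau)b(y)\varphi(y-kL)\,dy$ — wait, more carefully, split $\mathbb R$ into the periodicity cells and use $\int_{kL}^{(k+1)L}b(y)\,dy=\alpha L$, to get $\int_{\mathbb R}G(x-y,\tau)b(y)|\varphi(y)|\,dy\le \alpha L\sum_{k\in\mathbb Z}\max_{y\in[kL,(k+1)L]}\big(G(x-y,\tau)|\varphi(y)|\big)$. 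Applying this to $\varphi(y)=f(u(y,s))$ gives the crude bound $\le \alpha L K_1 \sum_k \max_{[kL,(k+1)L]}G(x-y,t-s)\le \frac{C_1}{\sqrt{t-s}}$, which shows the Duhamel integral is finite; for the modulus of continuity in $x$ I would instead apply it to $\varphi(y)=f(u(y,s))$ but with the difference kernel $G(x_1-y,t-s)-G(x_2-y,t-s)$ in place of $G$, obtaining
\[
\Big|\!\int_{\mathbb R}\!\big(G(x_1-y,t-s)-G(x_2-y,t-s)\big)b(y)f(u(y,s))\,dy\Big|\le \alpha L K_1\!\sum_{k\in\mathbb Z}\max_{y\in[kL,(k+1)L]}\!\big|G(x_1-y,t-s)-G(x_2-y,t-s)\big|.
\]
The hard part is then to show $\int_0^t\Big(\sum_{k}\max_{y\in[kL,(k+1)L]}|G(x_1-y,\sigma)-G(x_2-y,\sigma)|\Big)\,d\sigma\to 0$ as $|x_1-x_2|\to 0$, uniformly in $t\le $ (any fixed bound — but note $t$ is unbounded here!). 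This uniformity in $t\to\infty$ is the genuinely delicate point and, together with the lack of control on $b$, the main obstacle.

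\medskip

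To overcome the $t\to\infty$ issue I would not try to estimate $u$ itself for large $t$ but rather exploit that the problematic behavior is local in time near the singularity $s=t$: split the Duhamel integral as $\int_0^{t-\epsilon/2}+\int_{t-\epsilon/2}^t$. On $[t-\epsilon/2,t]$ the kernel argument $\sigma=t-s$ ranges over $[0,\epsilon/2]$, a fixed compact set, so the sum-of-maxima of the difference kernel is dominated by an $|x_1-x_2|$-dependent quantity that is uniform (one uses $|\partial_x G(x,\sigma)|\le C\sigma^{-1}e^{-x^2/8\sigma}$ summed over cells, giving $\sum_k\max_{[kL,(k+1)L]}|G(x_1-y,\sigma)-G(x_2-y,\sigma)|\le |x_1-x_2|\,\psi(\sigma)$ with $\int_0^{\epsilon/2}\psi(\sigma)\,d\sigma<\infty$). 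On $[0,t-\epsilon/2]$ the kernel $G(\cdot,\sigma)$ with $\sigma\ge\epsilon/2$ is smooth with $x$-derivative bounded by $C(\epsilon)e^{-x^2/(C\sigma)}$, so $\sum_k\max_{[kL,(k+1)L]}|G(x_1-y,\sigma)-G(x_2-y,\sigma)|\le C(\epsilon)|x_1-x_2|\,(1+\sigma^{-1/2})e^{-c\,\mathrm{dist}}$ — but now the issue is the length of the time interval grows with $t$. The remedy is that $e^{-y^2/(C\sigma)}$ summed over cells $k$ and then over $\sigma$ is still controlled: $\int_{\epsilon/2}^{t-\epsilon/2}\big(\sum_k \sup_{[kL,(k+1)L]}G(x_1-y,\sigma)\big)d\sigma$ stays bounded as $t\to\infty$ because $\sum_k \sup_{[kL,(k+1)L]}G(z-y,\sigma)\le C(1+\sigma^{-1/2})$ is \emph{not} integrable on $[\epsilon/2,\infty)$ — so I need to be sharper. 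The correct fix is to use the \emph{difference} of the kernels, not each kernel: write $G(x_1-y,\sigma)-G(x_2-y,\sigma)=\int_{x_2}^{x_1}\partial_x G(\xi-y,\sigma)\,d\xi$ and use $\sum_k\sup_{y\in[kL,(k+1)L]}|\partial_x G(\xi-y,\sigma)|\le C\sigma^{-1}$ for the near-diagonal cell plus exponential decay for far cells, giving $\sum_k\sup_{[kL,(k+1)L]}|\partial_x G(\xi-y,\sigma)|\le C(\sigma^{-1}+\sigma^{-1/2})$; since this still fails to be integrable at $\infty$, the genuinely needed observation is that the tail contribution is killed by the Gaussian: the far cells contribute $O(\sigma^{-1}e^{-cL^2/\sigma})$ — still not integrable at $\sigma=\infty$ — hmm. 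I would resolve this by reducing to Proposition \ref{compact} / Lemma \ref{Equicontinuity}-type arguments already established, or by a time-translation trick (by Remark \ref{RemarkForMildSolution}, $u(\cdot,t,u_0,b)=u(\cdot,\epsilon/2,\,u(\cdot,t-\epsilon/2,u_0,b),\,b)$, and $u(\cdot,t-\epsilon/2,u_0,b)$ is bounded by $K$, so equicontinuity of $u(\cdot,t,\cdot)$ for $t\ge\epsilon$ reduces to equicontinuity of $u(\cdot,s,v_0,b)$ at the single time $s=\epsilon/2$ over $\|v_0\|_\infty\le K$ and $b\in\Lambda(\alpha)$ — which removes the $t\to\infty$ difficulty entirely and leaves only the estimate on the fixed interval $[0,\epsilon/2]$). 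This semigroup/time-translation reduction is, I believe, the cleanest route, and after it the remaining estimates are the routine ones above with $C_1$ depending only on $\alpha,L,K,\epsilon$.
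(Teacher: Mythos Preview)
Your proposal is correct, and the core ingredients match the paper's proof: the Duhamel representation, the a priori bound $0\le u\le\max\{M,1\}$ from comparison, the cell-sum device $\int_{\mathbb R}G(\cdot)b(y)\,dy\le\alpha L\sum_k\max_{[kL,(k+1)L]}G(\cdot)$ to get uniformity in $b\in\Lambda(\alpha)$, and the splitting of the time integral near $s=t$ (crude $C/\sqrt{t-s}$ bound) versus away from $s=t$ (mean-value theorem in $x$ on $\partial_x G$). The one genuine difference is your semigroup reduction $u(\cdot,t,u_0,b)=u(\cdot,\epsilon,\,u(\cdot,t-\epsilon,u_0,b),\,b)$, which collapses the whole family $t\ge\epsilon$ to the single time $t=\epsilon$ with initial data bounded by $K=\max\{M,1\}$. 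The paper does \emph{not} do this: it fixes a general $t$, splits at $t^*<t$, and on $[0,t^*]$ simply asserts ``since $t^*<t$, we can get [the integral] is bounded,'' without addressing uniformity as $t\to\infty$. As you correctly diagnosed, the naive bound on $\int_0^{t^*}\sum_k\max_{[kL,(k+1)L]}|\partial_x G(\cdot,t-s)|\,ds$ behaves like $\int_\eta^t \sigma^{-1/2}\,d\sigma\sim\sqrt t$, so the paper's argument as written gives a modulus of continuity that degrades with $t$; your time-translation trick removes this issue cleanly and is the right way to establish the statement in the full generality in which it is asserted. (In the paper's applications only bounded $t$-intervals are actually needed, so this point is harmless there, but for the lemma as stated your route is the cleaner one.)
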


\begin{lem}\label{EquicontinuousForT}
Let $u(x,t,u_0,b)$ be the solution of \eqref{equationinsection3}
with $\bar{b}$ replaced by a smooth $b\in\Lambda(\alpha)$ with
initial data $u_0\in C(\mathbb R)\cap L^\infty(\mathbb R).$ Then
for any $\epsilon, \,M>0,$ $\{u(x,t,u_0,b)\}_{x\in
\mathbb{R},\|u_0\|\leq M, b\in\Lambda(\alpha)} $  is uniformly
equicontinuous in $t\in [\epsilon,\infty)$.
\end{lem}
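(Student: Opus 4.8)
The plan is to bound the forward time-difference $u(x,t+h,u_0,b)-u(x,t,u_0,b)$ uniformly, using the mild-solution representation of the solution \emph{restarted} at time $t$ together with the spatial equicontinuity already established in Lemma~\ref{EquicontinuousForX}. First I would record, via the comparison principle, that $0\le u(x,t,u_0,b)\le M':=\max\{M,1\}$ for all $t\ge0$, so that the reaction term obeys $|u(1-u)|\le K$ for a constant $K=K(M)$. Fix $\epsilon>0$, take $t\ge\epsilon$ and $0<h\le1$. Since $s\mapsto u(x,t+s,u_0,b)$ is the mild solution with initial datum $u(\cdot,t,u_0,b)$ by Remark~\ref{RemarkForMildSolution}, and since $\int_{\mathbb R}G(x-y,h)\,dy=1$, we may write
\[
u(x,t+h)-u(x,t)=\int_{\mathbb R}G(x-y,h)\big(u(y,t)-u(x,t)\big)\,dy+\int_0^h\!\!\int_{\mathbb R}G(x-y,h-s)\,b(y)\,u(y,t+s)\big(1-u(y,t+s)\big)\,dy\,ds.
\]

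For the first term I would invoke Lemma~\ref{EquicontinuousForX}, which furnishes a modulus of continuity $\rho=\rho_{\epsilon,M}$, independent of $x$, $u_0$ and $b$, with $|u(y,t)-u(x,t)|\le\rho(|x-y|)$ for $t\ge\epsilon$; hence this term is at most $\omega(h):=\int_{\mathbb R}G(z,h)\rho(|z|)\,dz$ in absolute value, and splitting the integral at $|z|=\delta$ (using that $\rho$ is bounded, $\rho(r)\to0$ as $r\to0$, and $\int_{|z|>\delta}G(z,h)\,dz\to0$ as $h\searrow0$) shows $\omega(h)\to0$ as $h\searrow0$, uniformly in all the parameters. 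For the second term I would estimate $|u(1-u)|\le K$ and then exploit the $L$-periodicity of $b$ together with $\int_{[0,L)}b\,dx=\alpha L$, exactly as in the proof of Lemma~\ref{weaksolismildsol}:
\[
\int_{\mathbb R}G(x-y,\tau)\,b(y)\,dy=\sum_{k\in\mathbb Z}\int_0^L b(z)\,G(x-z-kL,\tau)\,dz\le\alpha L\sum_{k\in\mathbb Z}\max_{0\le z\le L}G(x-z-kL,\tau)\le\frac{C\alpha L}{\sqrt\tau},
\]
where $C$ depends only on $L$ and the last inequality holds for $0<\tau\le1$. Thus the second term is bounded by $K\alpha L C\int_0^h(h-s)^{-1/2}\,ds=2C\alpha LK\sqrt h$, again uniformly in $x$, $u_0$, $b$ and $t$.

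Combining the two bounds gives $|u(x,t+h,u_0,b)-u(x,t,u_0,b)|\le\Psi(h):=\omega(h)+2C\alpha LK\sqrt h$ for every $t\ge\epsilon$, $0<h\le1$, $x\in\mathbb R$, $\|u_0\|\le M$ and $b\in\Lambda(\alpha)$, with $\Psi(h)\to0$ as $h\searrow0$. Writing any pair $t_1<t_2$ in $[\epsilon,\infty)$ with $t_2-t_1\le1$ as $t_1=t$, $t_2=t+h$ then yields the asserted uniform equicontinuity in $t\in[\epsilon,\infty)$. The genuine difficulty is that $b$ is not uniformly bounded over $\Lambda(\alpha)$, so one cannot control $u_t$ pointwise through the equation; the device that circumvents this is to keep the reaction term inside the heat-kernel convolution and use only the averaged constraint $\int_{[0,L)}b=\alpha L$ together with periodicity. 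The lone elementary computation required is the uniform Gaussian-sum estimate $\sum_{k\in\mathbb Z}\max_{0\le z\le L}G(x-z-kL,\tau)\le C\tau^{-1/2}$ for $0<\tau\le1$, which already appears in Section~3.
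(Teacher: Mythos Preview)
Your proof is correct and takes a genuinely different route from the paper's. The paper works with the mild-solution representation from time $0$, differences it at $t_1$ and $t_2$, and then applies the mean-value theorem in $t$ to the heat kernel; this produces fairly heavy expressions involving intermediate times $t_{\xi,y}$, and the Duhamel term has to be split at an auxiliary cut-off time $t^*<t_1$ to isolate the singularity. Your idea of restarting the mild solution at time $t$ and writing
\[
u(x,t+h)-u(x,t)=\int_{\mathbb R}G(x-y,h)\bigl(u(y,t)-u(x,t)\bigr)\,dy+\text{(short-time Duhamel)}
\]
is cleaner: the linear part is handled in one stroke by feeding the spatial modulus of continuity from Lemma~\ref{EquicontinuousForX} into the heat kernel, and the Duhamel remainder over $[0,h]$ is bounded directly by $C\sqrt h$ via the same periodic Gaussian-sum estimate the paper uses. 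The trade-off is that your argument relies on Lemma~\ref{EquicontinuousForX} having been proved first (which it has, and independently of the present lemma), whereas the paper's proof of Lemma~\ref{EquicontinuousForT} is self-contained. Your approach also makes transparent why the bound is uniform on all of $[\epsilon,\infty)$ rather than just on a compact interval $[T_1,T_2]$, since the only dependence on $t$ enters through the modulus $\rho_{\epsilon,M}$ from Lemma~\ref{EquicontinuousForX}.
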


{}From Lemma \ref{EquicontinuousForX} and Lemma
\ref{EquicontinuousForT}, The following
lemma easily follows. We omit the proof since it is straightfoward.

\begin{lem}\label{Equicontinuity}
Let $u(x,t,u_0,b)$ be the solution of \eqref{equationinsection3}
with $\bar{b}$ replaced by a smooth $b\in\Lambda(\alpha)$ with
initial data $u_0\in C(\mathbb R)\cap L^\infty(\mathbb R).$ Then
for any $\epsilon, \,M>0,$ $\{u(x,t,u_0,b)\}_{\|u_0\|\leq M,
b\in\Lambda(\alpha)} $  is uniformly equicontinuous in $(x,t)\in
\mathbb{R}\times[\epsilon,\infty)$.
\end{lem}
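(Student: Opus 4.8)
The plan is to deduce the joint uniform equicontinuity in $(x,t)$ directly from the two one-variable statements, Lemmas \ref{EquicontinuousForX} and \ref{EquicontinuousForT}, by a triangle-inequality argument; no further analysis of the equation is needed. Fix $\epsilon>0$ and $M>0$ once and for all. By Lemma \ref{EquicontinuousForX} there is a nondecreasing function $\omega_x:[0,\infty)\to[0,\infty)$ with $\omega_x(r)\to 0$ as $r\to 0^+$ such that
\[
|u(x,t,u_0,b)-u(x',t,u_0,b)|\leq\omega_x(|x-x'|)
\]
for all $x,x'\in\mathbb{R}$, all $t\geq\epsilon$, all $u_0$ with $\|u_0\|\leq M$, and all $b\in\Lambda(\alpha)$; the crucial point, which is exactly what the statement of Lemma \ref{EquicontinuousForX} asserts, is that $\omega_x$ is independent of $t$, $u_0$ and $b$. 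Likewise, by Lemma \ref{EquicontinuousForT} there is a nondecreasing $\omega_t:[0,\infty)\to[0,\infty)$ with $\omega_t(r)\to 0$ as $r\to 0^+$ such that
\[
|u(x,t,u_0,b)-u(x,t',u_0,b)|\leq\omega_t(|t-t'|)
\]
for all $t,t'\geq\epsilon$, all $x\in\mathbb{R}$, all $\|u_0\|\leq M$ and all $b\in\Lambda(\alpha)$, with $\omega_t$ independent of $x$, $u_0$ and $b$.

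Then, for any two points $(x,t),(x',t')\in\mathbb{R}\times[\epsilon,\infty)$, I would insert the intermediate point $(x,t')$ and apply the triangle inequality:
\[
|u(x,t,u_0,b)-u(x',t',u_0,b)|\leq|u(x,t,u_0,b)-u(x,t',u_0,b)|+|u(x,t',u_0,b)-u(x',t',u_0,b)|.
\]
The first term on the right is at most $\omega_t(|t-t'|)$ (applicable since $t,t'\geq\epsilon$), and the second is at most $\omega_x(|x-x'|)$ (applicable since $t'\geq\epsilon$). Hence, putting $\omega(s):=\omega_x(s)+\omega_t(s)$, which is nondecreasing and satisfies $\omega(s)\to 0$ as $s\to 0^+$, and using monotonicity together with $|x-x'|,|t-t'|\leq|x-x'|+|t-t'|$, one gets
\[
|u(x,t,u_0,b)-u(x',t',u_0,b)|\leq\omega\big(|x-x'|+|t-t'|\big),
\]
the right-hand side being independent of $u_0$ (with $\|u_0\|\leq M$) and of $b\in\Lambda(\alpha)$. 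This is precisely the asserted uniform equicontinuity of the family $\{u(x,t,u_0,b)\}$ on $\mathbb{R}\times[\epsilon,\infty)$.

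Since the whole argument collapses to a single triangle inequality, there is no real obstacle here. The one thing that must be verified — and that is already guaranteed by the precise formulation of Lemmas \ref{EquicontinuousForX} and \ref{EquicontinuousForT} — is that the two moduli are uniform over $t\geq\epsilon$ (respectively $x\in\mathbb{R}$), over $\|u_0\|\leq M$, and over $b\in\Lambda(\alpha)$, so that the combined modulus $\omega$ inherits the same uniformity. (Working with the values on $[\epsilon,\infty)$ throughout is also what makes the insertion of $(x,t')$ legitimate.) This is why the proof can be, and is, omitted as straightforward.
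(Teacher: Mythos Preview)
Your proposal is correct and is precisely the straightforward triangle-inequality argument the paper has in mind; the paper itself omits the proof, merely stating that Lemma~\ref{Equicontinuity} follows easily from Lemmas~\ref{EquicontinuousForX} and~\ref{EquicontinuousForT}. Your write-up simply fills in those omitted details, so there is nothing to add or compare.
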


\subsection{Proof of Lemma \ref{EquicontinuousForX} }
Given $b$ and $u_0$, we denote $u(x,t,u_0,b)$ by $u(x,t)$ simply.
First, we recall that $u(x,t)$ is a mild solution. It can be
written as
\[
\begin{split}
u(x,t)=&\frac{1}{\sqrt{4\pi t}}\int_\mathbb{R}
e^{-\frac{(x-y)^2}{4t}}u(y,0)dy\\&+\int_0^t\int_\mathbb{R}
\frac{e^{-\frac{(x-y)^2}{4(t-s)}}}{\sqrt{4\pi(t-s)}}b(y)u(y,s)(1-u(y,s))\,dyds).
\end{split}
\]

Let $t>0$ be given. In order to prove that $u(x,t)$ is
equicontinuous with respect to $x,$ we need to prove that for any
small $\epsilon>0,$ there exists a $\delta>0$ such that
\[
\ |u(x_1,t)-u(x_2,t)|\leq \epsilon,
\] if $|x_1-x_2|\leq\delta.$ For any $x_1>x_2 \in\mathbb R$,
\begin{eqnarray*}\displaystyle
 u(x_1,t)-u(x_2,t) = \frac{1}{\sqrt{4\pi t}}\int_\mathbb{R}
(e^{-\frac{(x_1-y)^2}{4t}}-e^{-\frac{(x_2-y)^2}{4t}})u(y,0)dy\\
+ \int_0^t\int_\mathbb{R}
\frac{e^{-\frac{(x_1-y)^2}{4(t-s)}}-e^{-\frac{(x_2-y)^2}{4(t-s)}}}{\sqrt{4\pi(t-s)}}b_n(y)u(y,s)(1-u(y,s))\,dyds.
\end{eqnarray*}

Next we prove that for any $\epsilon>0$, there exists a
$\delta>0$, such that the first part
\[ \ \frac{1}{\sqrt{4\pi t}}\int_\mathbb{R}
(e^{-\frac{(x_1-y)^2}{4t}}-e^{-\frac{(x_2-y)^2}{4t}})u(y,0)dy
\]
and the second part
\[
\ \int_0^t\int_\mathbb{R}
\frac{e^{-\frac{(x_1-y)^2}{4(t-s)}}-e^{-\frac{(x_2-y)^2}{4(t-s)}}}{\sqrt{4\pi(t-s)}}b_n(y)u(y,s)(1-u(y,s))\,dyds
\]
can be bounded by $\epsilon$ if $| x_1-x_2 |\leq \delta$.

For the first part,
\[
\begin{split}
 \frac{1}{\sqrt{4\pi t}}\int_\mathbb{R}
(e^{-\frac{(x_1-y)^2}{4t}}-e^{-\frac{(x_2-y)^2}{4t}})u(y,0)dy\\=\frac{1}{\sqrt{4\pi
t}}\int_\mathbb{R} -
\frac{2(x_{\xi,y}-y)}{4t}e^{-\frac{(x_{\xi,y}-y)^2}{4t}}&u(y,0)dy\times(x_1-x_2),
\end{split}
\]
where $x_{\xi,y}$ is a function of $y$ satisfying
\[
\ e^{-\frac{(x_1-y)^2}{4t}}-e^{-\frac{(x_2-y)^2}{4t}}=-
\frac{2(x_{\xi,y}-y)}{4t}e^{-\frac{(x_{\xi,y}-y)^2}{4t}}
\]
and $x_{\xi,y} \in (x_2,x_1).$ We know that the integration
\[
\ \int_\mathbb{R} | x| e^{-x^2}dx
\]
is bounded. Now we need to prove that
\[
\ \frac{1}{\sqrt{4\pi t}}\int_\mathbb{R} -
\frac{2(x_{\xi,y}-y)}{4t}e^{-\frac{(x_{\xi,y}-y)^2}{4t}}u(y,0)dy
\]
is bounded. Consider the following sets: \[ \ \Omega_i=\{y
\in\mathbb R| i\leq | \sup\{| y-x_1|,| y-x_2| \}\leq
i+1\},~i=1,2,3....
\] We have
\[
\begin{split}
 &\frac{1}{\sqrt{4\pi t}}\int_\mathbb{R} -
\frac{2(x_{\xi,y}-y)}{4t}e^{-\frac{(x_{\xi,y}-y)^2}{4t}}u(y,0)dy\\
&=\sum_{i\in Z} \int_{\Omega_i} \frac{-1}{\sqrt{4\pi t}\sqrt{4t}}
\frac{2(x_{\xi,y}-y)}{\sqrt{4t}}e^{-\frac{(x_{\xi,y}-y)^2}{4t}}u(y,0)dy.
\end{split}
\]
There exists a constant $M(t)>0$, such that for every $\Omega_i$,
\[
\begin{split}
\int_{\Omega_i} &| \frac{-1}{\sqrt{4\pi t}\sqrt{4t}}
\frac{2(x_{\xi,y}-y)}{\sqrt{4t}}e^{-\frac{(x_{\xi,y}-y)^2}{4t}}u(y,0)|
dy\\ &\leq M(t)\int_{\Omega_i}| y-x_1| e^{-(y-x_1)^2}dy
\end{split}
\]
which means the first part can be bounded by $\epsilon/2$ if
$|x_1-x_2|$ is small enough.

Next step is to prove that the second part
\[ \ |\int_0^t\int_\mathbb{R}
\frac{e^{-\frac{(x_1-y)^2}{4(t-s)}}-e^{-\frac{(x_2-y)^2}{4(t-s)}}}{\sqrt{4\pi(t-s)}}b(y)u(y,s)(1-u(y,s))\,dyds|
\leq \epsilon/2,
\] if $| x_1-x_2|\leq\delta.$

For notational simplicity, in what follows we write:
\[
\ \sup_{x,kL}g(x):=\sup_{x \in [kL,kL+L]}g(x).
\]

First, \[ \ \big|\int_0^t\int_\mathbb{R}
\frac{e^{-\frac{(x-y)^2}{4(t-s)}}}{\sqrt{4\pi(t-s)}}b(y)u(y,s)(1-u(y,s))\,dyds\big|
\]
\[
\ \leq M\int_0^t \frac{\alpha L}{\sqrt{4\pi(t-s)}}\sum_{k\in Z}
\sup_{y,kL}e^{-\frac{(x-y)^2}{4(t-s)}}\,ds=^{\exists} M'(t),
\] where $M>0$ is a constant related to the bound of $u$ and $M'(t)$ is bounded for fixed $t$. Hence
\[
\ \int_0^t\int_\mathbb{R}
\frac{e^{-\frac{(x-y)^2}{4(t-s)}}}{\sqrt{4\pi(t-s)}}b(y)u(y,s)(1-u(y,s))\,dyds
\]
is uniformly bounded for $x \in\mathbb R.$ Then we can choose a
$t^*<t$ such that \[ \ |\int_{t^*}^t\int_\mathbb{R}
\frac{e^{-\frac{(x_1-y)^2}{4(t-s)}}}{\sqrt{4\pi(t-s)}}b(y)u(y,s)(1-u(y,s))\,dyds|<\epsilon/4
\] and
\[ \ |\int_{t^*}^t\int_\mathbb{R}
\frac{e^{-\frac{(x_2-y)^2}{4(t-s)}}}{\sqrt{4\pi(t-s)}}b(y)u(y,s)(1-u(y,s))\,dyds|<\epsilon/4,
\] where $t^*$ is independent of the choice of $x_1$ and $x_2.$

Consider the integral from $0$ to $t^*$. \[ \
\int_0^{t^*}\int_\mathbb{R}
\frac{e^{-\frac{(x_1-y)^2}{4(t-s)}}-e^{-\frac{(x_2-y)^2}{4(t-s)}}}{\sqrt{4\pi(t-s)}}b(y)u(y,s)(1-u(y,s))\,dyds\]
\[ \ =\int_0^{t^*}\int_\mathbb{R}
\frac{-2\frac{x_{\xi,y}-y}{4(t-s)}e^{-\frac{(x_{\xi,y}-y)^2}{4(t-s)}}}{\sqrt{4\pi(t-s)}}b(y)u(y,s)(1-u(y,s))\,dyds\,(x_1-x_2)
\]
\[ \
=\int_0^{t^*}\frac{-2}{\sqrt{4\pi(t-s)}\sqrt{4(t-s)}}\int_\mathbb{R}
\frac{\frac{x_{\xi,y}-y}{\sqrt{4(t-s)}}e^{-\frac{(x_{\xi,y}-y)^2}{4(t-s)}}}{\sqrt{4\pi(t-s)}}b(y)u(y,s)(1-u(y,s))\,dyds
\]
\[
\ \times (x_1-x_2),
\]
where $x_{\xi,y}$ is a function of $y$ and takes values between
$x_2$ and $x_1.$

Since $t^*<t$, we can get \[ \
\big|\int_0^{t^*}\frac{-2}{\sqrt{4\pi(t-s)}\sqrt{4(t-s)}}\int_\mathbb{R}
\frac{\frac{x_{\xi,y}-y}{\sqrt{4(t-s)}}e^{-\frac{(x_{\xi,y}-y)^2}{4(t-s)}}}{\sqrt{4\pi(t-s)}}b(y)u(y,s)(1-u(y,s))\,dyds\big|
\] is bounded.

Then if we choose $\delta$ small enough, when $| x_1-x_2|\leq
\delta$, we have
\[
\begin{split}
\big|\int_0^{t^*}\frac{-2}{\sqrt{16\pi}(t-s)}&\int_\mathbb{R}
\frac{\frac{x_{\xi,y}-y}{\sqrt{4(t-s)}}e^{-\frac{(x_{\xi,y}-y)^2}{4(t-s)}}}{\sqrt{4\pi(t-s)}}b(y)u(y,s)(1-u(y,s))\,dyds\\
&\times (x_1-x_2)\big|\leq \epsilon/2.
\end{split}
\]
It is not difficult to see that $\delta$ is independent of $x_1$
and $x_2.$

Combining the above estimates, we see that there exists $\delta>0$
such that if $| x_1-x_2| \leq \delta$, then  $|
v_n(x_1,t)-v_n(x_2,t)|\leq \epsilon.$ It is not difficult to see
that the above estimate is independent of the choice of $b$ and
$u_0.$ This completes the proof of the lemma.

\subsection{Proof of Lemma \ref{EquicontinuousForT}
}
As in lemma \ref{EquicontinuousForX}, we still use
\[
\begin{split} u(x,t)&=\frac{1}{\sqrt{4\pi t}}\int_\mathbb{R}
e^{-\frac{(x-y)^2}{4t}}u(y,0)dy\\
 &+\int_0^t\int_\mathbb{R}
\frac{e^{-\frac{(x-y)^2}{4(t-s)}}}{\sqrt{4\pi(t-s)}}b(y)u(y,s)(1-u(y,s))\,dyds.
\end{split}
\]

Here we consider an interval $[T_1,T_2]\subset\mathbb R$, where
$T_2>T_1>0.$ For $t_1<t_2 \in [T_1,T_2]$,
\[
\begin{split}
&u(x,t_1)-u(x,t_2)\\=&\frac{1}{\sqrt{4\pi t_1}}\int_\mathbb{R}
e^{-\frac{(x-y)^2}{4t_1}}u(y,0)dy-\frac{1}{\sqrt{4\pi
t_2}}\int_\mathbb{R} e^{-\frac{(x-y)^2}{4t_2}}u(y,0)dy
\\
&+\int_0^{t_1}\int_\mathbb{R}\frac{e^{-\frac{(x-y)^2}{4(t_1-s)}}}{\sqrt{4\pi(t_1-s)}}b(y)u(y,s)(1-u(y,s))\,dyds
\\
&-\int_0^{t_2}\int_\mathbb{R}\frac{e^{-\frac{(x-y)^2}{4(t_2-s)}}}{\sqrt{4\pi(t_2-s)}}b(y)u(y,s)(1-u(y,s))\,dyds.
\end{split}
\]
We divide the right-hand side into two parts as in Lemma
\ref{EquicontinuousForX}.  \[ \ (I):=\frac{1}{\sqrt{4\pi
t_1}}\int_\mathbb{R}
e^{-\frac{(x-y)^2}{4t_1}}u(y,0)dy-\frac{1}{\sqrt{4\pi
t_2}}\int_\mathbb{R} e^{-\frac{(x-y)^2}{4t_2}}u(y,0)dy
\] and
\[
\
 (II):=\int_0^{t_1}\int_\mathbb{R}\frac{e^{-\frac{(x-y)^2}{4(t_1-s)}}}{\sqrt{4\pi(t_1-s)}}b(y)u(y,s)(1-u(y,s))\,dyds
\]
\[
\
-\int_0^{t_2}\int_\mathbb{R}\frac{e^{-\frac{(x-y)^2}{4(t_2-s)}}}{\sqrt{4\pi(t_2-s)}}b(y)u(y,s)(1-u(y,s))\,dyds.
\] First,
\[
\ (I)=\int_\mathbb{R}
\frac{\frac{(x-y)^2}{4t_{\xi,y}^2}e^{-\frac{(x-y)^2}{4t_{\xi,y}}}-\frac{4\pi}{2\sqrt{4\pi
t_{\xi,y}}}e^{-\frac{(x-y)^2}{4t_{\xi,y}}}}{4\pi
t_{\xi,y}}(t_1-t_2)u(y,0)dy,
\]
where $t_{\xi,y}$ is between $t_2$ and $t_1$.

As discussed in Lemma \ref{EquicontinuousForX}, we may get that
$(I)$ can be very small if $|t_1-t_2|$ is small uniformly in
$x\in \mathbb R.$

The next step is to prove that for any $\epsilon>0$, there exists
a $\delta>0$ such that the absolute value of the term $(II)$ is
less than or equal to $\epsilon$ if $|t_1-t_2|\leq\delta$.
\[
\begin{split}
(II)=&\int_0^{t_1}\int_\mathbb{R}\frac{e^{-\frac{(x-y)^2}{4(t_1-s)}}}{\sqrt{4\pi(t_1-s)}}b(y)u(s,y)(1-u(y,s))\,dyds\\
&-\int_0^{t_2}\int_\mathbb{R}\frac{e^{-\frac{(x-y)^2}{4(t_2-s)}}}{\sqrt{4\pi(t_2-s)}}b(y)u(y,s)(1-u(y,s))\,dyds\\
=&\int_0^{t_1}\int_\mathbb{R}\frac{e^{-\frac{(x-y)^2}{4(t_1-s)}}}{\sqrt{4\pi(t_1-s)}}b(y)u(y,s)(1-u(y,s))\,dyds\\
&-\int_0^{t_1}\int_\mathbb{R}\frac{e^{-\frac{(x-y)^2}{4(t_2-s)}}}{\sqrt{4\pi(t_2-s)}}b(y)u(y,s)(1-u(y,s))\,dyds\\
&-\int_{t_1}^{t_2}\int_\mathbb{R}\frac{e^{-\frac{(x-y)^2}{4(t_2-s)}}}{\sqrt{4\pi(t_2-s)}}b(y)u(y,s)(1-u(y,s))\,dyds.
\end{split}
\]
It is not difficult to show that
\[
\
\Big|\int_{t_1}^{t_2}\int_\mathbb{R}\frac{e^{-\frac{(x-y)^2}{4(t_2-s)}}}{\sqrt{4\pi(t_2-s)}}b(y)u(y,s)(1-u(y,s))\,dyds\Big|\leq
D\sqrt{| t_1-t_2|},
\] where $D$ is a constant. Therefore it suffices to consider
\[
\begin{split}
\int_0^{t_1}\int_\mathbb{R}\frac{e^{-\frac{(x-y)^2}{4(t_1-s)}}}{\sqrt{4\pi(t_1-s)}}b(y)u(y,s)(1-u(y,s))\,dyds\hspace{30pt}\\
\hspace{20pt}-\int_0^{t_1}\int_\mathbb{R}\frac{e^{-\frac{(x-y)^2}{4(t_2-s)}}}{\sqrt{4\pi(t_2-s)}}b(y)u(y,s)(1-u(y,s))\,dyds.
\end{split}
\]
Once again, as in lemma \ref{EquicontinuousForX}, for
$\epsilon>0$, there exists a $t^*<t_1$, such that \[ \
\big|\int_{t^*}^{t_1}\int_\mathbb{R}\frac{e^{-\frac{(x-y)^2}{4(t_1-s)}}}{\sqrt{4\pi(t_1-s)}}b(y)u(y,s)(1-u(y,s))\,dyds\big|\leq
\epsilon/4
\] and
\[
\
\big|\int_{t^*}^{t_1}\int_\mathbb{R}\frac{e^{-\frac{(x-y)^2}{4(t_2-s)}}}{\sqrt{4\pi(t_2-s)}}b(y)u(y,s)(1-u(y,s))\,dyds\big|\leq
\epsilon/4.
\]
The left-hand side is an integral from $0$ to $t^*$ which is
\[
\begin{split}
&\int_0^{t^*}\int_\mathbb{R}(\frac{e^{-\frac{(x-y)^2}{4(t_1-s)}}}{\sqrt{4\pi(t_1-s)}}-\frac{e^{-\frac{(x-y)^2}{4(t_2-s)}}}{\sqrt{4\pi(t_2-s)}})b(y)u(y,s)(1-u(y,s))\,dyds\\
 =&\int_0^{t^*}\int_\mathbb{R}
\frac{\frac{(x-y)^2}{4(t_{\xi,y}-s)^2\sqrt{4\pi(t_{\xi,y}-s)}}e^{-\frac{(x-y)^2}{4(t_{\xi,y}-s)}}-\frac{4\pi}{2\sqrt{4\pi(t_{\xi,y}-s)}}e^{-\frac{(x-y)^2}{4(t_{\xi,y}-s)}}}
{4\pi(t_{\xi,y}-s)}\\ &\hspace{40pt}\times
(t_1-t_2)b(y)u(y,s)(1-u(y,s))\,dyds\\
 =&\int_0^{t^*}\int_\mathbb{R}
\frac{\frac{(x-y)^2}{4(t_{\xi,y}-s)^2\sqrt{4\pi(t_{\xi,y}-s)}}e^{-\frac{(x-y)^2}{4(t_{\xi,y}-s)}}}{4\pi(t_{\xi,y}-s)}(t_1-t_2)b(y)u(y,s)(1-u(y,s))\,dyds\\
 &-\int_0^{t^*}\int_\mathbb{R}
\frac{\frac{4\pi}{2\sqrt{4\pi(t_{\xi,y}-s)}}e^{-\frac{(x-y)^2}{4(t_{\xi,y}-s)}}}{4\pi(t_{\xi,y}-s)}(t_1-t_2)b(y)u(y,s)(1-u(y,s))\,dyds.
\end{split}
\]
Next we prove that both
\[
\ \int_0^{t^*}\int_\mathbb{R}
\frac{\frac{(x-y)^2}{4(t_{\xi,y}-s)^2\sqrt{4\pi(t_{\xi,y}-s)}}e^{-\frac{(x-y)^2}{4(t_{\xi,y}-s)}}}{4\pi(t_{\xi,y}-s)}b(y)u(y,s)(1-u(y,s))\,dyds
\]
and
\[
\ -\int_0^{t^*}\int_\mathbb{R}
\frac{\frac{4\pi}{2\sqrt{4\pi(t_{\xi,y}-s)}}e^{-\frac{(x-y)^2}{4(t_{\xi,y}-s)}}}{4\pi(t_{\xi,y}-s)}b(y)u(y,s)(1-u(y,s))\,dyds
\]
are uniformly bounded in $x\in\mathbb R.$ Since they are similar,
we prove the former. We have
\[
\begin{split}
&\big|\int_0^{t^*}\int_\mathbb{R}
\frac{\frac{(x-y)^2}{4(t_{\xi,y}-s)^2\sqrt{4\pi(t_{\xi,y}-s)}}e^{-\frac{(x-y)^2}{4(t_{\xi,y}-s)}}}{4\pi(t_{\xi,y}-s)}b(y)u(y,s)(1-u(y,s))\,dyds\big|\\
&\leq M\big|\int_0^{t^*}\frac{1}{8\sqrt{\pi}
(t_1-s)^{7/2}}\int_\mathbb{R}
{(x-y)^2}e^{-\frac{(x-y)^2}{4(t_{\xi,y}-s)}}b(y)\,dyds\big|
\end{split}
\]
As we have proven above, \[ \ \int_\mathbb{R}
{(x-y)^2}e^{-\frac{(x-y)^2}{4(t_{\xi,y}-s)}}b(y)dy
\]
is uniformly bounded for any $x \in \mathbb R$. Since $t^*<t_1$,
this integral is bounded by a constant $M(t^*)>0$. Consequently
there exists a $\delta_1>0$, such that if $t_2-t_1\leq\delta_1$,
then \[ \ \big|\int_0^{t^*}\int_\mathbb{R}
\frac{\frac{(x-y)^2}{4(t_{\xi,y}-s)^2\sqrt{4\pi(t_{\xi,y}-s)}}e^{-\frac{(x-y)^2}{4(t_{\xi,y}-s)}}}{4\pi(t_{\xi,y}-s)}(t_1-t_2)b(y)u(y,s)(1-u(y,s))\,dyds
\big|\leq \epsilon/4. \] We can also prove that there exists $
\delta_2>0$ such that if $t_2-t_1\leq \delta_2$, then
\[
\ \big|\int_0^{t^*}\int_\mathbb{R}
\frac{\frac{4\pi}{2\sqrt{4\pi(t_{\xi,y}-s)}}e^{-\frac{(x-y)^2}{4(t_{\xi,y}-s)}}}{4\pi(t_{\xi,y}-s)}(t_1-t_2)b(y)u(y,s)(1-u(y,s))\,dyds\big|\leq
\epsilon/4.
\]
Hence, for any $\epsilon>0$, there exists a constant $\delta>0$
such that
\[
| u(x,t_1)-u(x,t_2)|\leq \epsilon \hbox{ if } | t_1-t_2|\leq
\delta.
\]
Here $t_1, t_2 \in [T_1,T_2]$ and the equicontinuity is
independent of the choice of $x$, $u_0$ and $b$. The proof of the
lemma is complete.

%%%%%%%%%%%%%%%%%%%%%%%%%%%%%%%%%%%%%%%%%%%%%%%%%%%%%%%%%%%%%%%%%%%%%%%%%%%%%%%%%%%%%%OVER

\end{document}